\newtheorem{theorem}{Theorem}[section]
\newtheorem{lemma}[theorem]{Lemma}
\newtheorem{proposition}[theorem]{Proposition}
\newtheorem{corollary}[theorem]{Corollary}
\theoremstyle{definition}
\newtheorem{definition}[theorem]{Definition}
\newtheorem{example}[theorem]{Example}
\theoremstyle{remark}
\newtheorem*{remark}{Remark}
\newtheorem*{Question}{Question}
\numberwithin{equation}{section}
\DeclarePairedDelimiterX\Set[2]{\lbrace}{\rbrace}
 { #1 \,\delimsize|\,\mathopen{} #2 }
\def\({\left(}
\def\){\right)}
 \newcommand{\Z}{\mathbb{Z}}
  \newcommand{\N}{\mathbb{N}}
\title[Top.  Fact.  of Zero Dim. Dyn. Sys.]{Topological Factoring of Zero Dimensional  Dynamical Systems}
\author{N. Golestani, M. Hosseini, H. Yahya Oghli}
\address{Department of Pure Mathematics, Faculty of Mathematical Sciences, 
  Tarbiat Modares University,
 Tehran\\ Iran}
 \email{n.golestani@modares.ac.ir}
\address{School of Mathematical Sciences, Queen Mary University of London, Mile End Road
London E1 4NS, UK }
\email{m.hosseini@qmul.ac.uk}
\address{Department of Pure Mathematics, Faculty of Mathematical Sciences, 
  Tarbiat Modares University,
 Tehran\\ Iran}
 \email{hamed.yahyaoghli@modares.ac.ir}
\subjclass[2010]{54H20, 37B10, 37B05}
\keywords{Bratteli diagram, (semi-)decisive Bratteli diagram, perfect ordering, ordered premorphism, topological factoring, Kakutani-Rokhlin partitions, quasi-section,  basic set. }
\begin{document}
\maketitle
\begin{abstract}
 We show that every topological factoring between two zero dimensional dynamical systems  can be represented by a sequence  of  morphisms  between the levels of the associated ordered Bratteli diagrams.  Conversely, we will prove that given an ordered Bratteli diagram $B$ with a continuous Vershik map on it,  every sequence of morphisms between levels of $B$ and $C$, where  $C$ is another ordered Bratteli diagram with continuous Vershik map,  induces a topological factoring if and only if  $B$ has a unique infinite min path. We present a method to construct various examples of ordered premorphisms between two decisive Bratteli diagrams such that the induced maps between the two Vershik systems are not topological factorings. We provide sufficient conditions for the existence of a topological factoring from an ordered premorphism.  Expanding on the modelling of factoring, we generalize the Curtis-Hedlund-Lyndon theorem to represent factor maps between two zero dimensional dynamical systems through sequences of sliding block codes. 
 \end{abstract}
\section{Introduction}
Topological dynamical systems on zero dimensional spaces, specifically Cantor sets, have been extensively researched over the past decades. Pioneering theorems, such as Jewett-Krieger's, support these studies, stating that every ergodic system on a probability Lebesgue space is isomorphic to a uniquely ergodic minimal system on a Cantor set \cite{J, K}.

Some of the mostly used ``models" in studying   zero dimensional dynamical system  are transformations acting on the shift spaces or  Bratteli-Vershik systems on ordered Bratteli diagrams (see Subsection~\ref{versh}). The notion of Bratteli diagram was  introduced in operator algebras and then by the celebrated work of R. Herman, I. Putnam and C. Skau  \cite{hps92}, became a tool for studying zero dimensional systems and absorbed a lot of investigations by people in symbolic dynamics and operator algebras  \cite{B, don, dm, gps95, gw95, ormes, Tak1, Tak2}. Creating a Bratteli diagram for a zero dimensional system is based on the existence of  {\it Kakutani-Rokhlin (K-R) partitions}  for them.  That is a  certain  union of disjoint clopen sets that covers the space $X$. Existence of  a K-R partition   is equivalent to the existence of  a {\it complete $T$-section}, i.e.  a clopen set $U$ that  hits every orbit:  $$\bigcup_{n\in\mathbb Z}T^n(U)=X.$$
This equality together with  the compactness of the space $X$  leads to  a K-R partition $\mathcal P$. Having a sequence of K-R partitions ({\it K-R system}), say $\{\mathcal P_n\}_{n\geq 0}$, in such a way that for every $n>0$ the union of the top levels of $\mathcal P_n$ is contained in the union of  the top levels of $\mathcal P_{n-1}$, generates a closed set $W$ that all its clopen neighbourhoods are complete $T$-sections \cite{Poon}. In the first version of this paper (posted on arXiv) we called such a  closed set, {\it a weak basic set}. However, soon after  we learned that the same concept has been considered in a recent paper of T. Shimomura   in \cite{Tak3} as  {\it a quasi-section}. Not generating different terminologies for the same concept, in the sequel we use the  phrase quasi-section instead of weak basic set.  

The existence of  quasi-sections  for a zero dimensional dynamical system $(X,T)$  was firstly established for minimal systems on Cantor sets by I. Putnam in \cite{putnam89} and then for Cantor essentially minimal systems  in \cite{hps92}, where it was shown that every singleton $\{x\}$ that  $x$ belongs to the unique minimal subsystem, is a quasi-section set. For Cantor aperiodic systems, K. Medynets in \cite{med} defined the concept of a "basic set" (a quasi-section that intersects every orbit at most once) and proved that every Cantor aperiodic system possesses a basic set.  In the more general scenario of zero-dimensional dynamical systems, when the set of aperiodic points is dense, the system's realization through a sequence of K-R partitions can be inferred from the findings of T. Downarowicz and O. Karpel in \cite{dk16, dk17}, where they characterized such systems as "array systems". The endeavour to represent zero-dimensional dynamical systems as K-R systems, with the convergence of top levels towards a quasi-section, was ultimately accomplished by T. Shimomura in \cite{Tak2}, where the notion of "graph-covering models" was introduced and utilized to establish that every zero-dimensional dynamical system can be modelled as a Bratteli-Vershik system.

 It turns out that  if $(X,T)$ is a zero dimensional dynamical system, then it  can be modelled by a  {\it Vershik homeomorphism}  $T_B$  on  the compact space of all (partially ordered) infinite paths of an ordered Bratteli diagram $X_B$ that the set of its infinite maximal paths is associated  to a  quasi-section  \cite{Tak3} (see also Proposition~\ref{perfect}).  The Vershik map is initially defined as $T_B:X_B\setminus X_B^{\rm max}\rightarrow X_B\setminus X_B^{\rm min}$ (see Definition~\ref{order}) and then 
the domain of $T_B$ may be extended to the whole space to have $\bar{T}_B: X_B\rightarrow X_B$  that $\bar{T}_B$ maps $X_B^{\rm max}$  into $X_B^{\rm min}$ (that we call it {\it natural extension}). The ordering of such a diagram  in which the Vershik map can be extended  to the whole space is called {\it perfect} as was defined by   S. Bezuglyi and R. Yassawi in \cite{BY, BY1}. 
The extension is unique if and only if $(X,T)$ has a dense set of aperiodic points  \cite[Theorem 3.1]{dk17}. Such ordered Bratteli diagrams are called {\it decisive} and  
examples of  perfect orderings on Bratteli diagrams that are not decisive were provided in  \cite{dk16, dk17}. 
 
In this paper,  we study {\it topological factoring}  between two zero dimensional dynamical systems. We generalize the  ``modelling" introduced  by M. Amini, G. Elliott and N. Golestani in \cite{aeg21},  of  topological factoring between two Cantor minimal systems, to the topological factoring of (any) two zero dimensional systems  with   respect to their K-R system realizations.
In \cite{aeg21} the authors proved that having topological factoring from a minimal Cantor system $(X,T)$ onto another  minimal system on  a Cantor set, $(Y,S)$, creates a sequence of (local) morphisms between the sets of vertices of the levels of Bratteli diagrams $C=(W, E', \leq')$ and $B=(V,E,\leq)$  associated to  the two systems respectively.  That is $f:B\rightarrow C$ which is a sequence $f=\{F_i\}_{i\geq 0}$, $F_i:V_i\rightarrow W_{f_i}$, for an  increasing  sequence $\{f_i\}_{i\geq 0}$ of natural numbers together with a partial  ordering on each $F_i$ (see Definition \ref{def61} for details).  The sequence $\{F_i\}_{i\geq 0}$ makes edges between levels of the two diagrams with some orderings on them. This sequence  is called an {\it ordered premorphism} and in \cite{aeg21} it is proved that the existence of such a sequence between two (essentially simple) Bratteli diagrams  is equivalent to the existence of a topological factoring between the two Bratteli-Vershik systems:
 $$\alpha=\mathcal V(f):(X_C,T_C)\rightarrow (Y_B, S_B)$$
 induced by an ordered premorphism $f:B\rightarrow C$. 
 In fact, the ordered premorphism determines exactly where   in $(Y_B, S_B)$ an infinite path from $(X_C, T_C)$ will be mapped  under $\alpha$.  
 As an application of this interpretation of topological factoring between two Bratteli-Vershik systems, in \cite{gh18} it was proved that every topological factor of a {\it finite topological rank} essentially minimal dynamical  systems on a Cantor set,  is of finite topological rank. Using symbolic interpretation of ordered premorphisms, in \cite{bastian} the author proved that if $(X_C, T_C)$ is minimal and $S_B$ is a shift map,  then the topological rank of $(Y_B, S_B)$ is at most equal to the topological rank of $(X_C, T_C)$.

Generalization of  this theory of the one-to-one correspondence between ordered premorphisms and topological factorings is investigated here. In other words, as every zero dimensional system has a non-trivial Bratteli-Vershik representation \cite{Tak2}, we obtain a Bratteli-Vershik representation (called ordered premorphism) for every factor map between two zero dimensional systems.

\begin{theorem}\label{fact2}
Let $(X,T)$ and $(Y,S)$ be two zero dimensional dynamical systems and $X_0$ and $Y_0$ be quasi-sections for $X$ and $Y$, respectively. If $\alpha: (X,T)\rightarrow (Y,S)$ is a topological factoring  such that $\alpha(X_0)\subseteq Y_0$ then for any perfect diagrams $C$ and $B$ that are B-V realizations of  $(X,T,X_0)$ and $(Y,S,Y_0)$ respectively, there exists 
a unique (up to equivalence) 
ordered premorphism $f:B\rightarrow C$ such that $\mathcal V(f)=\alpha$.
\end{theorem}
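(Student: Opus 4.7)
The plan is to transport $\alpha$ through the B-V conjugacies given by the perfect diagrams $C$ and $B$, and then extract the ordered premorphism level-by-level using continuity of $\alpha$ together with the compatibility condition $\alpha(X_0)\subseteq Y_0$. After this reduction, I may assume $(X,T)=(X_C,T_C)$ and $(Y,S)=(Y_B,S_B)$, with $X_0, Y_0$ corresponding to the sets of max paths (via Proposition~\ref{perfect}), so that $\alpha(X_C^{\max})\subseteq Y_B^{\max}$. Write $\mathcal{P}_n$ and $\mathcal{Q}_n$ for the K-R partitions of $X_C$ and $Y_B$ whose atoms are indexed by $W_n$ and $V_n$ respectively, with tops shrinking onto the quasi-sections.

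For the telescoping, first I would choose a strictly increasing sequence $\{f_i\}$ so that $\mathcal{P}_{f_i}$ refines $\alpha^{-1}(\mathcal{Q}_i)$; this is possible because $\alpha$ is uniformly continuous on the compact zero-dimensional space $X_C$, the partitions $\mathcal{P}_n$ generate the clopen topology, and each $\mathcal{Q}_i$ is finite clopen. Then, for each $v\in V_i$ with tower $Q_v$ of height $h_v$ and base $B_v$, the preimage $\alpha^{-1}(B_v)$ decomposes as a finite disjoint union of bases of towers of $\mathcal{P}_{f_i}$. I would define $F_i(v)\in W_{f_i}$ via a canonical selection dictated by the boundary data, namely the unique vertex whose tower contains the lift of the max path through $v$ (well defined by $\alpha(X_C^{\max})\subseteq Y_B^{\max}$). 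The ordering on the edges into $F_i(v)$ is then prescribed by how $\alpha$, intertwining $T_C$ and $S_B$, wraps the tower $P_{F_i(v)}$ (of height $h_{F_i(v)}$) around $Q_v$ exactly $h_{F_i(v)}/h_v$ times; the sequence in which the levels of $P_{F_i(v)}$ land on the levels of $Q_v$ gives the order. Compatibility of $F_{i+1}$ with $F_i$ follows from the refinement of partitions across levels.

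To conclude, I would check that $\mathcal{V}(f)=\alpha$ first on the dense set of paths avoiding max/min paths, where the identity is tautological from the construction (both sides are determined by the action of $\alpha$ on partition atoms), and then extend to all of $X_C$ by continuity together with the hypothesis $\alpha(X_C^{\max})\subseteq Y_B^{\max}$. Uniqueness up to equivalence would come from observing that any two ordered premorphisms inducing $\alpha$ must agree on their vertex and edge data after passing to a common telescoping refinement, which is the equivalence prescribed in Definition~\ref{def61}. The main obstacle I anticipate is not the existence of $\{f_i\}$ and $\{F_i\}$ as sets, but verifying that the orderings extracted in the second step really satisfy all of the axioms of an ordered premorphism — in particular along the max/min paths, where clashing choices could easily occur. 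It is precisely here that the perfectness of $C$ and $B$ together with the compatibility $\alpha(X_0)\subseteq Y_0$ are indispensable, since they force the Vershik dynamics across the boundary paths to be tracked unambiguously by the ordering data of $f$.
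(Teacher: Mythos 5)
Your overall strategy is the one the paper follows (pull the K-R system of $(Y,S,Y_0)$ back through $\alpha$, refine it by the K-R system of $(X,T,X_0)$, and read off the premorphism from the tower decomposition), but two concrete steps are wrong or missing. First, when you choose $f_i$ you only ask that $\mathcal P_{f_i}$ refine $\alpha^{-1}(\mathcal Q_i)$ as a partition; you must also arrange that the \emph{top} of $\mathcal P_{f_i}$ is contained in the top of $\alpha^{-1}(\mathcal Q_i)$. This is exactly where the hypothesis $\alpha(X_0)\subseteq Y_0$ is used: the top of $\alpha^{-1}(\mathcal Q_i)$ is a clopen set containing $X_0$, and since the tops of the $\mathcal P_n$ shrink to $X_0$, compactness gives a large $f_i$ with the required containment (the argument of $(2)\Rightarrow(3)$ in Proposition~\ref{perfect}). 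Without this condition a tower of $\mathcal P_{f_i}$ can enter a tower of $\alpha^{-1}(\mathcal Q_i)$ in the middle, it does not decompose as a concatenation of complete $\alpha^{-1}(\mathcal Q_i)$-towers, and the ordered edge sets $F_i$ simply cannot be defined. You invoke $\alpha(X_0)\subseteq Y_0$ only at the end, for ``boundary tracking,'' which is not where it does its work.

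Second, your description of $F_i$ is backwards. An ordered premorphism $f:B\rightarrow C$ requires, for each vertex $w\in W_{f_i}$ of the diagram of $(X,T)$, a linearly ordered set $r^{-1}(w)$ of edges whose sources list, in order, the towers of $\mathcal Q_i$ traversed by the tower $P_w$ (read through $\alpha^{-1}$); this itinerary generally visits \emph{several distinct} vertices of $V_i$. Defining instead a single vertex $F_i(v)\in W_{f_i}$ for each $v\in V_i$ by ``lifting the max path through $v$'' is ill-posed (not every $v$ carries a max path, and a factor map gives no canonical lift), and the picture of $P_{F_i(v)}$ wrapping around the single tower $Q_v$ exactly $h_{F_i(v)}/h_v$ times presumes both divisibility and a constant itinerary, neither of which holds in general. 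Finally, uniqueness up to equivalence is not something one ``observes'': it is the content of Proposition~\ref{unique1}, whose proof requires the first-coordinate compatibility argument of Lemma~\ref{compat}; citing that proposition is fine, but your one-line justification is not a proof of it.
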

 \medskip

To prove the existence part of the above  theorem, we   present the theorem of Shimomura  \cite[Theorem~1.1]{Tak3} in the frame of Lemma~\ref{finer} and Proposition ~\ref{perfect} to be able to  construct the ordered premorphism $f$ inductively for all levels of $B$ such that $\mathcal V(f)=\alpha$ as in  the  method for the proof of \cite[Proposition 4.6]{aeg21}. The proof of the uniqueness, is done in a more general setting in Proposition \ref{unique1}.
\medskip

The converse of Theorem~\ref{fact2} is not  true. Having an ordered Bratteli diagram $B$ with at least two infinite min paths, one can construct an ordered Bratteli diagram $C$ and an ordered premorphism $f:B\rightarrow C$ such that $\mathcal V(f)$ is not  topological factoring. We will prove in Section 2, that $\mathcal V(f)$ is onto and continuous  but in Section 3,  we will see that the equivariant equation $\mathcal V(f)\circ T_C(x)=T_B\circ \mathcal V(f)(x)$ may fail for $x\in X_C^{\rm max}$.
However, the converse of Theorem~\ref{fact2} holds to some extent as
we prove in the following theorem. To have the results in  full generality, we say that an ordered Bratteli diagram is {\it semi-decisive} if it admits a continuous surjective extension $\bar{T}_B$ of the Vershik map $T_B$ to  $X_B$.  

\begin{theorem}\label{main}
Let $B$ be a semi-decisive ordered Bratteli diagram  such that its Vershik map  has a natural extension $\bar{T}_B$  to $X_B$. The following statements are equivalent:
\begin{enumerate}
\item \label{one} for every semi-decisive ordered Bratteli diagram $C$  with its natural extension $\bar{T}_C$ and every ordered premorphism $f:B\rightarrow C$, the induced map $\mathcal V(f): X_C\rightarrow X_B$ is a topological factoring.
\item \label{two} $B$ has a  unique infinite min path.
\end{enumerate}

\end{theorem}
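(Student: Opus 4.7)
The plan is to reduce the theorem to a single equivariance question at maximal paths. By the results referenced in Section~2 of the paper, for every ordered premorphism $f\colon B\to C$ the induced map $\mathcal V(f)\colon X_C\to X_B$ is automatically continuous and surjective, and by the combinatorial definition of $\mathcal V(f)$, the equivariance identity
\[
\bar T_B\circ \mathcal V(f)=\mathcal V(f)\circ \bar T_C
\]
holds on $X_C\setminus X_C^{\rm max}$, where the two Vershik maps act as ordinary successors on coherent edge sequences. Thus showing that $\mathcal V(f)$ is a factoring is equivalent to showing that equivariance holds on $X_C^{\rm max}$, and the whole theorem reduces to comparing, for $x\in X_C^{\rm max}$, the two min paths $\bar T_B(\mathcal V(f)(x))$ and $\mathcal V(f)(\bar T_C(x))$ in $X_B^{\rm min}$.

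For the implication \eqref{two}$\Rightarrow$\eqref{one} I would argue as follows. Assume $X_B^{\rm min}=\{m\}$. Fix an arbitrary semi-decisive $C$, an ordered premorphism $f\colon B\to C$, and $x\in X_C^{\rm max}$. Because ordered premorphisms (Definition~\ref{def61}) respect the extremal paths, we have $\mathcal V(f)(X_C^{\rm max})\subseteq X_B^{\rm max}$ and $\mathcal V(f)(X_C^{\rm min})\subseteq X_B^{\rm min}$. Since $\bar T_C(x)\in X_C^{\rm min}$, the left-hand side $\mathcal V(f)(\bar T_C(x))$ lies in $X_B^{\rm min}=\{m\}$; on the other hand, $\mathcal V(f)(x)\in X_B^{\rm max}$, and since $\bar T_B$ is the natural extension it sends $X_B^{\rm max}$ into $X_B^{\rm min}=\{m\}$ as well. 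Both sides equal $m$, so equivariance holds, and combining with continuity and surjectivity we conclude that $\mathcal V(f)$ is a topological factoring.

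For the converse \eqref{one}$\Rightarrow$\eqref{two} I would proceed contrapositively: assume $B$ admits two distinct infinite min paths $m_1\neq m_2$ and construct a counterexample. Choose a maximal path $y\in X_B^{\rm max}$ and set $\bar T_B(y)=m_1$ without loss of generality. The plan is to build a semi-decisive ordered Bratteli diagram $C$ together with an ordered premorphism $f\colon B\to C$ such that some max path $x\in X_C^{\rm max}$ satisfies $\mathcal V(f)(x)=y$ while $\bar T_C(x)\in X_C^{\rm min}$ is routed by $\mathcal V(f)$ onto $m_2$ rather than $m_1$. Concretely, at a suitable telescoping level~$n$ I would split the vertex of $B$ supporting $y$ into several copies in $C$, with the ordering arranged so that one copy is followed (under the successor of $\bar T_C$) by a minimal edge sitting over $m_2$. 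Then $\bar T_B(\mathcal V(f)(x))=m_1\neq m_2 = \mathcal V(f)(\bar T_C(x))$, so equivariance fails at $x$ and $\mathcal V(f)$ is not a topological factoring.

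The main obstacle is the counterexample construction: the diagram $C$ must simultaneously (i)~be semi-decisive, i.e.~admit a continuous surjective extension of its Vershik map, (ii)~realize the prescribed mismatch at the single path $x$, and (iii)~carry an ordering compatible with a genuine ordered premorphism from $B$. Ensuring (i) is the delicate point, as introducing additional max paths in $C$ can easily break continuity of $\bar T_C$ elsewhere; I would handle this by performing the splitting only at one level, keeping the rest of $C$ isomorphic to a telescoping of $B$, so that $\bar T_C$ remains determined and continuous on the unchanged portion of $X_C$. This construction is the technical heart of the theorem and is the part I expect to require the most care, whereas the forward direction follows almost immediately once the reduction in the first paragraph is in place.
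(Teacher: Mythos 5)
Your opening reduction is incorrect, and the error propagates through both directions. What holds automatically (Proposition~\ref{factor1}\eqref{intersect}) is that $\mathcal V(f)\circ T_C=T_B\circ\mathcal V(f)$ on $\alpha^{-1}(X_B\setminus X_B^{\rm max})$, \emph{not} on $X_C\setminus X_C^{\rm max}$. These sets differ: a non-maximal path $x\in X_C$ can have $\alpha(x)\in X_B^{\rm max}$, and at such a point $T_C(x)$ is the combinatorial successor while $T_B$ is undefined at $\alpha(x)$, so nothing forces $\alpha(T_C(x))=\bar T_B(\alpha(x))$. The paper's entire counterexample machinery (Subsection~\ref{construction}, Theorem~\ref{base}\eqref{nofactor}) lives exactly in this overlooked region: the constructed $x$ is \emph{not} a max path of $C=B'$, yet $\alpha(x)=y\in X_B^{\rm max}$ and $\alpha(T_{B'}x)=z\neq\bar T_B(y)$. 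Under your reduction this configuration would be impossible, so your framework rules out the very phenomenon the theorem is about. Concretely, in $(2)\Rightarrow(1)$ you omit the case $x\in(X_C\setminus X_C^{\rm max})\cap\alpha^{-1}(X_B^{\rm max})$; it is salvageable (the paper's Proposition~\ref{factor3} notes that for such $x$ the image $\alpha(T_C(x))$ is forced to be a min path, hence the unique one), but it must be argued, not assumed away.

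For $(1)\Rightarrow(2)$ your plan to place the failure at a \emph{max} path $x\in X_C^{\rm max}$ is the harder and more fragile route. If $x$ is a limit of non-maximal paths $x_n$ with $\alpha(x_n)$ non-maximal, continuity forces $\alpha(\bar T_C(x))=\lim T_B(\alpha(x_n))=\bar T_B(y)$, so equivariance would hold; you would therefore need $x$ isolated, need $C$ to contain a min path mapping onto $m_2$ (not automatic, cf.\ Example~\ref{cantor}), and need the chosen extension $\bar T_C$ to remain surjective. None of this is addressed by ``splitting one vertex at a suitable level while keeping the rest of $C$ a telescoping of $B$''; the paper instead adds a new vertex $v_0^n$ at \emph{every} level, with the incoming edges ordered so that the copy of $y_n$ is immediately followed by the copy of $z_n$, which makes the bad path $x$ non-maximal and makes its successor combinatorially forced to lie over $z$ --- no choice of extension of $\bar T_C$ is involved, and semi-decisiveness of $C$ is then verified separately (Lemma~\ref{lemmain}\eqref{semi}). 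As written, your construction sketch does not yield the required ordered premorphism or the mismatch, so the converse direction has a genuine gap.
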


As a corollary, if $B$ is simple and its ordering is perfect, then for every  diagram $C$ with perfect ordering and every ordered premorphism $f:B\rightarrow C$, the induced map $\mathcal V(f)$ makes a topological factoring if and only if  $B$ is proper (Corollary~\ref{proper}). 

To prove Theorem \ref{main}, we introduce a method for constructing a (semi-)decisive Bratteli diagram $C$ associated with a given non-proper (semi-)decisive ordered Bratteli diagram $B$ with a premorphism $f:B\rightarrow C$ so that the induced map $\mathcal V(f):(X_C,T_C)\rightarrow (Y_B, S_B)$ does not make  topological factoring. Nevertheless, having some ``structural" properties on the diagrams $B$ and $C$, the existence of  an ordered premorphism $f:B\rightarrow C$ will lead to a topological factoring. For instance,  if $C$ is simple,  $B$ is  of finite rank and the map $\alpha=\mathcal V(f)$ is finite-to-one on $\alpha^{-1}(X_B^{\rm max})$ then $\alpha$ is a topological factoring (Proposition \ref{simple}). It is likely  that if both $B$ and $C$ are simple and decisive then $\alpha=\mathcal V(f)$ is a topological factoring. 

Some equivalent conditions to the existence of a topological factoring induced by a premorphism between two semi-decisive Bratteli diagrams are provided (Proposition \ref{equiv}).
As a direct consequence of having topological factoring induced by a premorphism between two decisive Bratteli diagrams, we  show  that every  non-proper decisive Bratteli diagram of rank 2 is  conjugate to an odometer or it is a disjoint union of two odometers (Proposition \ref{odo}).

Theorem \ref{fact2}  provides a combinatorial model for topological factoring  between two zero dimensional dynamical systems using  ordered premorphisms. This can  be viewed as a generalization of the well-known Curtis–Hedlund–Lyndon theorem for modelling   factor maps between two zero-dimensional dynamical systems  through sequences of {\it sliding block codes}. To establish this result, we will prove the following theorem presented in the framework of S-adic representations of ordered Bratteli diagrams and ordered premorphisms as detailed in \cite{don, gh18}. See Section 5 for the notations used in this theorem.
 \begin{theorem}\label{inversefactor}
Consider zero dimensional dynamical systems $(X,T,{ X_0})$ and $(Y, S, { Y_0})$
where $X_0$ and $Y_0$ are quasi-sections. 
Then there exists
$ \pi:(X,T)\longrightarrow (Y,S)\ \ {\rm with} \ \  { \pi(X_0)\subseteq Y_0}$
if and only if  for every  K-R systems $\{\mathcal P_n\}_{n\geq 0}$ and $\{\mathcal Q_n\}_{n\geq 0}$ for $(Y,S,Y_0)$ and $(X,T,X_0)$, respectively, and the inverse limit systems associated to them,  there exists a sequence of natural numbers $\{n_i\}_{i\geq 0}$ and a sequence of sliding block codes $\pi_i:(\tilde{\mathcal Q}_{n_i},\sigma)\rightarrow (\tilde{\mathcal P}_i,\sigma)$ for all $i\geq 0$  such that 
 all the following rectangles between the inverse limit sequences commute:
 \begin{equation}\label{diag}
\xymatrix{(\tilde{Q}_{0},\sigma)\ar[d]_{\pi_{0}}
&(\tilde{Q}_{n_1},\sigma)\ar[l]_{_{\gamma_{1}}}\ar[d]_{\pi_{1}} &(\tilde{Q}_{n_2},\sigma)\ar[l]_{\gamma_{2}}\ar[d]_{\pi_{2}} &\cdots\ar[l]_{\gamma_{3}}& (X,T,X_0)\ar[l]\ar[d]_{\pi}\ \ \  \\
(\tilde{P}_{0},\sigma)
&(\tilde{P}_1,\sigma)\ar[l]^{\beta_1} &(\tilde{P}_{2},\sigma)\ar[l]^{\beta_2}&\cdots\ar[l]^{\beta_3}&(Y,S, Y_0)\ar[l] \   
 }
\end{equation}
where $\gamma_i:=\alpha_{n_i+1}\circ\alpha_{n_i+2}\circ\cdots\circ\alpha_{n_{i+1}}$.
\end{theorem}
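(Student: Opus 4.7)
The plan is to use Theorem~\ref{fact2} as the pivot between topological factor maps and ordered premorphisms, and to translate the combinatorial data of a premorphism level-by-level into sliding block codes by invoking the S-adic/tower-shift picture associated with a K-R system. Recall that for a K-R system $\{\mathcal P_n\}_{n\geq 0}$ of $(Y,S,Y_0)$, the level-$n$ tower structure yields a finite-alphabet shift system $(\tilde{\mathcal P}_n,\sigma)$ whose sequences code itineraries of points through the columns of $\mathcal P_n$; the refinement $\mathcal P_{n+1}\succeq \mathcal P_n$ gives the canonical sliding block code $\beta_{n+1}$, and $(Y,S,Y_0)$ is realised as the inverse limit of this tower. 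The analogous data works for $\{\mathcal Q_n\}$ and $(X,T,X_0)$.

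For the forward direction, given $\pi:(X,T)\to (Y,S)$ with $\pi(X_0)\subseteq Y_0$, I would apply Theorem~\ref{fact2} to the perfect B-V realizations $C$ of $(X,T,X_0)$ and $B$ of $(Y,S,Y_0)$ associated with the two given K-R systems, obtaining an ordered premorphism $f=\{F_i\}_{i\geq 0}:B\to C$ with $\mathcal V(f)=\pi$. Each component $F_i:V_i\to W_{f_i}$ of the premorphism, together with its ordering, assigns to each column of $\mathcal P_i$ a well-defined ordered concatenation of columns of $\mathcal Q_{f_i}$; reading off the column labels of this concatenation yields a sliding block code $\pi_i:(\tilde{\mathcal Q}_{f_i},\sigma)\to (\tilde{\mathcal P}_i,\sigma)$. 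Setting $n_i:=f_i$, the consistency condition built into an ordered premorphism---namely, that $F_{i+1}$ followed by the level $(i{+}1)$-to-$i$ data on $C$ agrees with the level $f_{i+1}$-to-$f_i$ data on $B$ followed by $F_i$---becomes exactly the commutativity of the inner rectangles in~\eqref{diag}, while the outermost square commutes because $\mathcal V(f)=\pi$ is the inverse limit of the $\pi_i$.

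For the converse, assuming the commuting diagram is given, the universal property of inverse limits produces a continuous map $\pi$ in the rightmost column that is equivariant (since each $\pi_i$ commutes with $\sigma$) and surjective (since each sliding block code is onto between the tower shifts, as both sides come from K-R partitions). It remains to upgrade $\pi$ to a factor map sending $X_0$ into $Y_0$, which I would do by reversing the dictionary above: each sliding block code $\pi_i$ prescribes a vertex map $F_i:V_i\to W_{n_i}$ together with an ordering, and the $\sigma$-commutativity of the rectangles in~\eqref{diag} forces $\{F_i\}_{i\geq 0}$ to satisfy the axioms of an ordered premorphism $f:B\to C$; Theorem~\ref{fact2} then identifies $\mathcal V(f)$ with the limit map $\pi$, in particular placing $\pi(X_0)$ inside $Y_0$.

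The principal obstacle is this last identification in the presence of the quasi-section constraint: sliding block codes between the shifts $(\tilde{\mathcal Q}_{n_i},\sigma)$ and $(\tilde{\mathcal P}_i,\sigma)$ carry no intrinsic information about ``top'' or ``min'' paths, whereas ordered premorphisms encode delicate data about the infinite max and min paths that parametrize $X_0$ and $Y_0$. I would address this via Proposition~\ref{perfect}, which identifies the top levels of $\mathcal P_n$ (respectively $\mathcal Q_n$) with a decreasing clopen neighbourhood basis of $Y_0$ (respectively $X_0$); under this identification, the requirement $\pi(X_0)\subseteq Y_0$ is equivalent to each $\pi_i$ sending top-of-tower coordinates to top-of-tower coordinates, which is precisely what the commutativity of the ``$\sigma$-rectangles'' forces in the block-code language. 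Once this matching of boundary behaviour is pinned down, the rest of the argument is bookkeeping within the framework already set up for Theorem~\ref{fact2}.
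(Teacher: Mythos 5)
Your forward direction is essentially the paper's. The paper likewise invokes the proof of Theorem~\ref{fact2} to obtain the sequence $n_i=f_i$ for which $\mathcal Q_{n_i}$ refines $\pi^{-1}(\mathcal P_i)$ with top contained in top, and then defines $\pi_i$ directly by $\pi_i(\tilde\tau_{n_i}(x))=\tilde\tau'_i(\pi(x))$, verifying $\sigma$-equivariance and $\beta_i\circ\pi_i=\pi_{i-1}\circ\gamma_i$ by computation with the truncation maps; your reading of the codes off the edge sets $F_i$ is the same data in the S-adic dress of Proposition~\ref{localmor}. (Minor slip: the substitution attached to $F_i$ decomposes each column of $\mathcal Q_{n_i}$ into an ordered stack of columns of $\mathcal P_i$, not the other way around; the direction of the resulting code $\pi_i$ is stated correctly.)

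The converse is where you have a genuine gap. You propose to recover an ordered premorphism $f:B\rightarrow C$ from the given sliding block codes and then to let Theorem~\ref{fact2} ``identify $\mathcal V(f)$ with the limit map $\pi$, in particular placing $\pi(X_0)$ inside $Y_0$.'' Theorem~\ref{fact2} cannot be used this way: its hypothesis is a factor map already satisfying $\pi(X_0)\subseteq Y_0$, and its conclusion is the existence of a premorphism inducing it, so invoking it to deduce $\pi(X_0)\subseteq Y_0$ is circular. The non-circular route would be to build $f$ directly from the $\pi_i$ and then quote Proposition~\ref{factor1}(2), but that is precisely the step you have not supplied: a general sliding block code carries no canonical vertex map or ordering, and extracting one requires that each $\pi_i$ send top-of-tower symbols to top-of-tower symbols. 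Your claim that the $\sigma$-commutativity of the rectangles forces this is not true as stated: taking $X=Y$, $T=S$, the same K-R system on both rows, and $\pi_i(\tilde\tau_{n_i}(x)):=\tilde\tau'_i(Tx)$ (the canonical $1$-block codes composed with the shift) makes every rectangle commute, yet the inverse-limit map is $T$, which in general does not map $X_0$ into $Y_0$ (e.g.\ for an odometer with $X_0=Y_0$ a singleton). So either the $\pi_i$ must be taken to be the tower-respecting $1$-block codes produced in the forward direction (which is what the paper implicitly assumes when it declares the converse ``straightforward'' and sets $\pi(x):=\varprojlim_i\pi_i(\tilde\tau_{n_i}(x))$), or an additional argument is required; your proposal supplies neither. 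The surjectivity of $\pi$ is likewise asserted (``each sliding block code is onto $\dots$ as both sides come from K-R partitions'') without justification, since a sliding block code between tower shifts need not be onto unless one builds that into the hypothesis.
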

In fact, in the above theorem, $(X,T,X_0)$ and $(Y,S,Y_0)$ were represented as the inverse limit systems of their (intermediate) symbolic factors, $(\tilde{Q}_{i},\sigma)$ and $(\tilde{P}_{i},\sigma)$ respectively. So for every $i\geq 1$, $\alpha_i$ and $\beta_i$ are the connecting maps between the intermediate factors. 
 When the system $(Y,S)$ is essentially minimal, then by  Theorem \ref{fact2} and Theorem \ref{main}, the argument of Theorem \ref{inversefactor} is somewhat ``simplified." Indeed, in this case, there exists a sequence of morphisms $\eta_i:\mathcal Q_{n_i}\rightarrow \mathcal P_i^*$ that guarantee  the existence of the sliding block codes $\pi_i:(\tilde{\mathcal Q}_{n_i},\sigma)\rightarrow (\tilde{\mathcal P}_i,\sigma)$ (see Proposition \ref{localmor}). 
Moreover, if $(X,T)$ and $(Y,S)$ are minimal  subshifts, then there exists some $i\geq 1$ such that $(X,T)\simeq (\tilde{\mathcal Q}_{n_i},\sigma)$ and $(Y,S)\simeq (\tilde{\mathcal P}_{i},\sigma)$ where $\simeq$ denotes conjugacy. Therefore, when we have a factor map $\pi:(X,T)\rightarrow (Y,S)$,   this is modelled by the sliding block code $\pi_i$ for a sufficiently large $i$.

The structure of this paper is as follows. In Section 2, we fix some notations and for the convenience of the reader, we first  recall some definitions and theorems from \cite{gps95, hps92, dk16, dk17}.  Then we investigate some of the basic properties of the induced map from an ordered premorphism that are used in the sequel such as a sufficient condition for having almost one-to-one extension induced by an ordered premorphism (Lemma \ref{kto1}).  Some necessary and/or sufficient conditions for having topological factoring out of an ordered premorphism between two semi-decisive Bratteli diagrams, are provided.

In Section 3, we describe a method for constructing a (semi-)decisive Bratteli diagram $B'$ for a given (semi-)decisive Bratteli diagram $B$ and  an ordered premorphism $f:B\rightarrow B'$ so that the map $\mathcal V(f)$  is not a topological factoring between $(X_B,T_B)$ and $(Y_{B'},S_{B'})$. This will lead to the proof of Theorem \ref{main}.

In Section 4,  we deal with  the realization of topological factorings between two zero dimensional systems $(X,T)$ and $(Y,S)$ by ordered premorphisms to prove Theorem~\ref{fact2}. We recall the notion of Kakutani-Rokhlin partitions for zero dimensional systems as was discussed in \cite{dk17, hps92, med, Poon}. Naturally, in this section, the ordered Bratteli diagrams constructed  are perfect in the sense of \cite{BY}, i.e., $(X,T)$ and $(Y,S)$ are realized by the Vershik maps on $X_B$ and $Y_C$ that are homeomorphisms. 

 In Section 5, we will prove the generalization of the well-known Curtis–Hedlund–Lyndon theorem for modelling   factor maps between two zero-dimensional dynamical systems  by sequences of  sliding block codes.

 \section{Preliminaries} 
 \subsection{Zero Dimensional  Dynamical Systems.}\label{subsec_top}
A {\it zero dimensional dynamical system} is a pair $(X, T)$ where $X$ is a non-empty compact totally disconnected metric space and $T$ is a homeomorphism on $X$. 
The orbit of a  point $x\in X$, denoted by ${\mathcal O}(x)$, is  the sequence $(T^nx)_{n\in\mathbb{Z}}$. If $X$ has finitely many points then $(X,T)$ is called a {\it trivial } dynamical system. If $X$ is a Cantor space (that is, a nonempty compact metrizable totally disconnected space with no isolated points)
then the system is called a {\it Cantor system}. Two topological dynamical systems $(X, T)$ and $(Y, S)$ are  {\it semi-conjugate} if there exists a surjective continuous map $\alpha: X\rightarrow Y$ such that $\alpha\circ T=S\circ \alpha$. In this case $(Y, S)$ is called a {\it factor} of $(X, T)$,  $(X, T)$ is called an \emph{extension} of $(Y, S)$,
and $\alpha$ is called a \emph{factor map} or a {\it topological factoring}.
\smallskip

For a topological dynamical system $(X, T)$ if the orbits of all points  are infinite, then the systems is called {\it aperiodic}. When all the orbits of the points are dense
in $X$, then the system is called {\it minimal}. This is equivalent to the absence of
non-trivial invariant closed subsets. When  $(X, T)$  has a unique minimal subsystem,
 the system is called {\it essentially minimal}  \cite{bs02, dk16}.

\subsection{Bratteli–Vershik models of zero dimensional systems}\label{versh}
In this subsection we recall some of the definitions related to ordered Bratteli diagrams from \cite{BY, dk16, hps92} and some of the main results of \cite{dk16, dk17}.

\begin{definition}
\hspace{-0.2cm}
\begin{itemize}
\item A 
\emph{Bratteli diagram}
$B=(V, E)$ 
consists of an infinite sequence of finite, non-empty, pairwise disjoint sets 
$V_{0} = \{ v_{0} \}, V_{1}, V_{2}, \ldots$, 
called the vertices, another sequence of finite, non-empty, pairwise disjoint sets 
$E_{1}, E_{2}, \ldots$, 
called the 
\emph{edges}, 
and two maps 
$s : E_{n} \rightarrow V_{n-1}, r : E_{n} \rightarrow V_{n}$, 
for every 
$n \geq 1$, 
called the range and source maps, such that 
$r^{-1}\{ v \}$ 
is non-empty for all 
$v$ 
in 
$\cup_{n \geq 1} V_{n}$ 
and 
$s^{-1}\{ v \}$ 
is non-empty for all 
$v$ 
in 
$\cup_{n \geq 0} V_{n}$. For every $n\geq 1$ we have an {\it adjacency}  matrix $M_n$ of size $|V_n|\times |V_{n-1}|$ that its entries $M_n^{ij}$ shows the number of edges between $v_i\in V_n$ and $v_j\in V_{n-1}$. 
\item A Bratteli diagram is called {\it  simple} if there exists some sequence $\{n_k\}_{k\geq 1}$ so that $$\forall k\geq 1\ \ M_{n_k}\cdot M_{n_k+1}\cdots M_{n_{k+1}}>0.$$

\item 
An 
\emph{ordered Bratteli diagram}
$B=(V, E, \leq)$
consists of a Bratteli diagram 
$(V, E)$ 
and a partial order
$\leq$
on 
$E$ 
such that two edges 
$e, e^{\prime}$ 
in 
$E$ 
are comparable if and only if 
$r(e) = r(e^{\prime})$. 
In such a diagram, we let 
$E_{\max}$ 
and 
$E_{\min}$ 
denote the set of maximal and minimal edges, respectively.
\end{itemize}
\end{definition}

 \begin{definition}[\cite{BY, dk16}]\label{order}
Let  $B=(V,E,\leq)$ 
be an ordered Bratteli diagram and $X_B$ be the the compact space of all (partially ordered) infinite paths of $B$. 
\begin{enumerate}
\item The Vershik map $T_B:X_B\setminus X_B^{\rm max} \rightarrow X_B\setminus X_B^{\rm min}$ is defined by
$$T_B(e_0, e_1, \ldots, e_\ell,e_{\ell+1},\ldots)=(0,0,\ldots, e_{\ell}+1,e_{\ell+1},\ldots)$$
where $\ell$ is the first index that $e_\ell$ is not the max edge in $r^{-1}(r(e_{\ell}))$ and  $0$ denotes the min edge in $r^{-1}(r(e_i))$  for every  $i\geq 0$. 
\item $B$ is called \emph{perfect} if the Vershik map $T_B:X_B\setminus X_B^{\rm max}\rightarrow X_B\setminus X_B^{\rm min}$ can be extended to a homeomorphism $T_B: X_B\rightarrow X_B$. 

\item $B$ is called
\emph{decisive} 
if the Vershik map extends in a unique way to a homeomorphism 
$\bar{T}_B$ 
of 
$X_{B}$. 
A zero dimensional dynamical system 
$(X, T)$ 
will be called \emph{Bratteli–Vershikizable} if it is conjugate to 
$(X_{B}, T_{B})$ 
for a decisive ordered Bratteli diagram 
$B$. So every decisive Bratteli diagram is perfect. 
\item $B$  is called \emph{properly ordered} if it has a unique infinite min path and a unique infinite max path. Clearly every properly ordered Bratteli diagram is decisive. 
\end{enumerate}
\end{definition}

We refer the reader to \cite{BY, dk16, dk17} to see various examples of perfect or decisive Bratteli diagrams.

\begin{lemma}[\cite{dk16}, Lemma~6.11]\label{lem81}
An ordered Bratteli diagram is decisive if and only if the following two conditions hold:
\begin{enumerate}
\item
the Vershik map and its inverse are uniformly continuous on their domains, and
\item
the set of maximal paths and the set of minimal paths either both have empty interiors, or both their interiors consist of just one isolated point.
\end{enumerate}
\end{lemma}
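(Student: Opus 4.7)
The plan is to prove the two directions separately, with the interplay between $X_B^{\max}$ and $X_B^{\min}$ under the putative homeomorphic extension playing the central role.

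For the direction $(\Rightarrow)$, assume $B$ is decisive so that there is a unique homeomorphism $\bar{T}_B$ extending $T_B$. Condition (1) is immediate since any homeomorphism of a compact metric space is uniformly continuous together with its inverse, and the restriction recovers $T_B$ on its domain. For condition (2), the natural extension must send $X_B^{\max}$ into $X_B^{\min}$, and bijectivity upgrades this to a homeomorphism $\bar{T}_B : X_B^{\max} \to X_B^{\min}$, and hence also between their interiors. The decisive point is uniqueness: if a clopen subset $V$ of $\mathrm{int}(X_B^{\max})$ has $|V| \geq 2$, then $V$ admits a non-trivial self-homeomorphism $h$, and the spliced map that equals $\bar{T}_B \circ h$ on $V$ and $\bar{T}_B$ on $X_B \setminus V$ is still a continuous extension of $T_B$ (since $V \subseteq X_B^{\max}$ does not meet the domain of $T_B$, and $V$ is clopen), contradicting uniqueness. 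Since $X_B$ is zero-dimensional, any non-empty open set with more than one point or containing a non-isolated point contains such a $V$; ruling both out forces $\mathrm{int}(X_B^{\max})$ to be empty or a singleton, and an open singleton is automatically an isolated point of $X_B$. The symmetric analysis on $X_B^{\min}$ and the induced homeomorphism between the interiors then yield condition (2) in full.

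For the direction $(\Leftarrow)$, assume (1) and (2). When both interiors are empty, $X_B \setminus X_B^{\max}$ is dense in $X_B$ and uniform continuity produces a unique continuous extension $\bar{T}_B : X_B \to X_B$; the essential computation is that for $x \in X_B^{\max}$ and $y_n \to x$ with $y_n \notin X_B^{\max}$, the first non-maximal index $\ell(n)$ of $y_n$ tends to infinity (since $y_n$ eventually agrees with the all-maximal path $x$ on arbitrarily long prefixes), so $T_B(y_n)$ acquires an arbitrarily long initial run of minimal edges and the limit lies in $X_B^{\min}$. Applying the same procedure to the uniformly continuous inverse yields a continuous extension $S$ of $T_B^{-1}$, and the identities $S \circ \bar{T}_B = \mathrm{id}$ and $\bar{T}_B \circ S = \mathrm{id}$ hold on the dense open sets $X_B \setminus X_B^{\max}$ and $X_B \setminus X_B^{\min}$ respectively, so by continuity they hold on all of $X_B$ and $\bar{T}_B$ is a homeomorphism. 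When $\mathrm{int}(X_B^{\max}) = \{x\}$ and $\mathrm{int}(X_B^{\min}) = \{y\}$ with $x, y$ isolated, the same construction extends $T_B$ to $X_B \setminus \{x\}$; then setting $\bar{T}_B(x) = y$ is continuous because $x$ is isolated, and this value is forced by bijectivity together with the fact that the symmetric extension of $T_B^{-1}$ has image $X_B \setminus \{x\}$.

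The most delicate step is verifying injectivity of the extension on all of $X_B^{\max}$ in the empty-interior case, since $X_B^{\max}$ may be a large closed set on which no algebraic definition of $T_B$ is available. This is handled through the simultaneous extension of the inverse together with density: two continuous maps agreeing on a dense set agree everywhere, so both $S \circ \bar{T}_B = \mathrm{id}$ and any competing extension of $T_B$ are pinned down by their values on $X_B \setminus X_B^{\max}$ (respectively $X_B \setminus \{x\}$ in the singleton case), which simultaneously establishes uniqueness of the extension and closes the proof.
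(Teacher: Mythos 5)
The paper offers no proof of this statement: it is quoted, with attribution, from \cite{dk16} (Lemma~6.11), so there is no in-paper argument to compare yours against. Judged on its own, your reconstruction is essentially correct. The forward direction correctly reduces condition (1) to uniform continuity of a homeomorphism of a compact metric space, identifies $\bar{T}_B(X_B^{\rm max})=X_B^{\rm min}$ from bijectivity alone, and uses the splicing trick (post-composing with a homeomorphism supported on a clopen $V\subseteq \mathrm{int}(X_B^{\rm max})$, which is disjoint from the domain of $T_B$) to contradict uniqueness. The backward direction correctly splits into the dense-domain case, where uniform continuity gives unique continuous extensions of $T_B$ and $T_B^{-1}$ whose composites equal the identity on dense sets, and the isolated-singleton case, where the value at the isolated point is forced by bijectivity.

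The one assertion that needs justification -- and on which the entire forward implication for condition (2) rests -- is that every clopen $V$ with $|V|\geq 2$ in a compact zero-dimensional metric space admits a non-trivial self-homeomorphism. This is not automatic: rigid zero-dimensional compact Hausdorff spaces do exist in the non-metrizable setting (Stone spaces of rigid Boolean algebras), so the claim genuinely uses metrizability. It is true here by a short dichotomy: if $V$ has at least two isolated points, transposing them is a self-homeomorphism; otherwise $V$ has at most one isolated point, so the set of isolated points is finite and clopen, and its complement in $V$ is a non-empty clopen perfect compact zero-dimensional metric space, hence a Cantor set by Brouwer's theorem, whose non-trivial self-homeomorphisms extend by the identity. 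Inserting that argument closes the only gap; the rest of your proof stands as written.
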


According to 
\cite[Proposition~1.2]{dk17}, 
the second condition in the previous lemma is equivalent to this condition: the domains of the Vershik map and its inverse are either both dense in 
$X_{B}$ 
or their closures both miss one point (not necessarily the same).

The following result gives a necessary and  sufficient condition for a system to be Bratteli–Vershikizable.

\begin{theorem}[\cite{dk17}, Theorem~3.1]\label{thm82}
A  zero dimensional system 
$(X, T)$ 
is Bratteli-Vershikizable if and only if the set of aperiodic points is dense, or its closure misses one periodic orbit.
\end{theorem}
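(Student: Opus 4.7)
The plan is to prove both directions by combining the characterization of decisive ordered Bratteli diagrams in Lemma~\ref{lem81} with the Kakutani--Rokhlin partition machinery and quasi-section existence results already recalled in the excerpt.

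For the forward direction, suppose $(X,T)\simeq(X_B,\bar T_B)$ with $B$ decisive. The extended Vershik map $\bar T_B$ agrees with $T_B$ off $X_B^{\rm max}$ and sends $X_B^{\rm max}$ into $X_B^{\rm min}$, so a point $x\in X_B$ whose entire $\bar T_B$-orbit avoids $X_B^{\rm max}\cup X_B^{\rm min}$ is aperiodic because each application of $T_B$ strictly alters the edge sequence at some level by Definition~\ref{order}. By Lemma~\ref{lem81} there are two possibilities. If $X_B^{\rm max}$ and $X_B^{\rm min}$ both have empty interior, then $\bigcap_{n\in\mathbb Z}\bar T_B^{-n}(X_B\setminus(X_B^{\rm max}\cup X_B^{\rm min}))$ is a dense $G_\delta$ consisting of aperiodic points, so aperiodic points are dense. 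If instead each has interior reduced to a single isolated point, iterating the isolated maximum produces a single isolated periodic orbit $\mathcal O$; the same $G_\delta$-argument applied inside $X_B\setminus\mathcal O$ shows the aperiodic points are dense in $X_B\setminus\mathcal O$, so their closure misses exactly $\mathcal O$.

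For the backward direction I split into two cases matching the hypothesis. In Case~1, aperiodic points are dense in $X$; here the existence of a quasi-section contained in the aperiodic points (from Medynets' basic set construction in the Cantor aperiodic case and the Downarowicz--Karpel array-system results in general) provides a decreasing chain of complete $T$-sections whose intersection is that quasi-section, and the associated sequence of K-R partitions yields an ordered Bratteli diagram $B$ whose maximal paths correspond to the quasi-section. Density of aperiodic points forces $X_B^{\rm max}$ and $X_B^{\rm min}$ to have empty interior, and Lemma~\ref{lem81} delivers decisivity. In Case~2, the aperiodic points' closure misses a periodic orbit $\mathcal O$ of some period $p$, which is therefore isolated; I would build K-R partitions in which a designated representative of $\mathcal O$ occupies its own clopen singleton tower of height $p$ at every level, forcing $X_B^{\rm max}$ and $X_B^{\rm min}$ to have interior consisting of a single isolated path each, and Lemma~\ref{lem81} again gives decisivity.

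The main obstacle lies in Case~2 of the backward direction: one must arrange a coherent telescoping sequence of K-R partitions that simultaneously guarantees continuity of the Vershik map, encodes the isolated periodic orbit via a fixed tower at every level, and meets the single-isolated-point clause of Lemma~\ref{lem81} for both the maximal and minimal paths. Ensuring that the isolated max path and its $\bar T_B$-image (the isolated min path) lie in the same periodic orbit requires careful, level-by-level control of the order on the incoming edges. This is essentially the combinatorial content of the Shimomura and Downarowicz--Karpel representations cited in the excerpt, and adapting those constructions to satisfy both clauses of Lemma~\ref{lem81} simultaneously is the delicate step.
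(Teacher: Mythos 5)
The first thing to say is that the paper does not prove this statement at all: it is quoted as background from Downarowicz and Karpel (\cite{dk17}, Theorem 3.1), so there is no in-paper proof to compare yours against. Judged on its own terms, your proposal has the right skeleton --- Lemma~\ref{lem81} supplies the dichotomy on the interiors of $X_B^{\rm max}$ and $X_B^{\rm min}$, and the Baire-category intersection $\bigcap_{n\in\mathbb Z}\bar T_B^{-n}\bigl(X_B\setminus(X_B^{\rm max}\cup X_B^{\rm min})\bigr)$ does yield a dense set of aperiodic points in the empty-interior case --- but it contains genuine gaps in both directions.

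In the forward direction, in the second case of Lemma~\ref{lem81} you assert that iterating the isolated maximal path ``produces a single isolated periodic orbit.'' This is not automatic: a priori the forward orbit of the isolated minimal path $z_0=\bar T_B(y_0)$ could consist of infinitely many distinct isolated points that never return to $y_0$, in which case $y_0$ is itself aperiodic and there is no periodic orbit to excise; ruling this out (or showing that the aperiodic points are then dense after all) is a real step requiring the uniform-continuity clause of Lemma~\ref{lem81} and an analysis of the orbit's trace at each finite level, and you would still need to argue that the closure of the aperiodic points misses \emph{exactly} that orbit. In the backward direction, Case 1 rests on ``the existence of a quasi-section contained in the aperiodic points'' drawn from Medynets and from the Downarowicz--Karpel array-system results. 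Medynets' theorem in \cite{med} applies to aperiodic systems, not to systems with merely a dense set of aperiodic points, and the Downarowicz--Karpel construction you invoke is precisely the proof of the theorem you are being asked to establish, so as written the argument is circular; Case 2 you explicitly leave open. The missing content in the converse is exactly the marker/Kakutani--Rokhlin construction producing a nested system of partitions whose tops intersect in a quasi-section with the interior properties demanded by Lemma~\ref{lem81} (empty interior, or a single isolated point lying on the excluded periodic orbit); that construction is the substance of the theorem, not a detail to be deferred.
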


\subsection{Ordered Premorphisms}
\begin{definition}[\cite{aeg21}, Definition~2.5]\label{def61}
Let 
$B=(V,E,\geq)$ 
and 
$C=(W,S,\geq)$ 
be ordered Bratteli diagrams. By an 
\emph{ordered premorphism} 
(or just a 
\emph{premorphism} 
if there is no confusion) 
$f: B\to C$ 
we mean a triple 
$(F, (f_{n})_{n=0}^{\infty},\geq )$ 
where
$(f_{n})_{n=0}^{\infty}$ 
is an  unbounded sequence of positive integers with
$f_{0}=0\leq f_{1}\leq f_{2}\leq\cdots$,
$F$ 
consists of a disjoint union
$F_{0}\cup F_{1}\cup F_{2}\cup\cdots$ 
together with a pair of range and source maps
$r:F\to W$, 
$s:F\to V$, 
and
$\geq$ 
is a partial order on 
$F$ 
such that:

\begin{enumerate}
\item\label{def61_1}
each 
$F_{n}$ 
is a non-empty finite set, 
$s(F_{n})\subseteq V_{n}$,
$r(F_{n})\subseteq W_{f_{n}}$,  $F_{0}$
is a singleton,
$s^{-1}\{v\}$ 
is non-empty for all 
$v$ 
in 
$V$, 
and
$r^{-1}\{w\}$ 
is non-empty for all 
$w$
in 
$W$;
\item\label{def61_2}
$e,e'\in F$ 
are comparable if and only if 
$r(e)=r(e')$, 
and 
$\geq$ 
is a linear order on 
$r^{-1}\{w\}$, 
for all 
$w\in W$;

\item\label{def61_3}
the diagram of 
$f: B \to C$,

\[
\xymatrix{V_{0}\ar[r]^{E_{1}}\ar[d]_{F_{0}}
&V_{1}\ar[r]^-{E_{2}}\ar[d]_{F_{1}} &V_{2}\ar[r]^-{E_{3}}\ar[d]_{F_{2}} &\cdots \ \ \  \\
W_{f_{0}}\ar[r]_{S_{f_{0},f_{1}}}
&W_{f_{1}}\ar[r]_{S_{f_{1},f_{2}}} &W_{f_{2}}\ar[r]_{S_{f_{2},f_{3}}}&\cdots \   ,
 }
\]
commutes. The ordered commutativity of the diagram of 
$f$  
means that for each
$n\geq 0$,
$E_{n+1}\circ F_{n+1}\cong F_{n}\circ S_{f_{n},f_{n+1}}$, 
i.e., there is a (necessarily unique) bijective map from 
$E_{n+1}\circ F_{n+1}$ 
to
$F_{n}\circ S_{f_{n},f_{n+1}}$
preserving the order and intertwining the respective source and range maps.
\end{enumerate}
\end{definition}
To see how  the ordered premorphism $f:B\to C$ induces a well-defined function  $\alpha:X_{C}\to X_{B}$ between the two Vershik systems, 
let $x=(s_{1},s_{2},\ldots)$ be  an infinite path in $X_{C}$.
Define the path $\alpha(x)=(e_{1},e_{2},\ldots)$ in $X_{B}$ as follows.
Fix $n\geq 1$.
By Definition~\ref{def61}, the  diagram

 \[
\xymatrix{V_{0}\ar[r]^{F_0}\ar[d]_{E_{0,n}}
 &W_0\ar[d]^{S_{0,f_n}} \\
 V_n\ar[r]_{F_n}
 &W_{f_{n}}
 }
\]
commutes,  that is, $F_{0}\circ S_{0,f_{n}}\cong E_{0,n}\circ F_{n}$. Thus, there is a unique path
$(e_{1},e_{2},\ldots,e_{n},d_{n})$ in $E_{0,n}\circ F_{n}$ (in fact $(e_1, e_2, \ldots, e_n)\in E_{0,n}$ and $d_n\in F_n$), corresponding to
the path $(d_{0},s_{1},\ldots,s_{f_{n}})$ in
$F_{0}\circ S_{0,f_{n}}$
where $d_{0}$ is the unique element of $F_{0}$. So
the path $\alpha(x)=(e_{1},e_{2},\ldots)$ in $X_{B}$
is associated to the path $x=(s_{1},s_{2},\ldots)$  in $X_{C}$. 

\begin{proposition}\label{factor1}
Let $B$ and $C$ be two ordered Bratteli diagrams and $f:B\rightarrow C$ be an ordered premorphism between them. Let $\alpha=\mathcal V(f):X_C\rightarrow X_B$ be its induced map. Consider the Vershik homeomorphisms $T_B: X_B\setminus X_B^{\rm max}\rightarrow X_B\setminus X_B^{\rm min}$ and $T_C:X_C\setminus X_C^{\rm max}\rightarrow X_C\setminus X_C^{\rm min}$. Then
\begin{enumerate}
\item \label{onto} $\alpha:X_C\rightarrow X_B$ is continuous and surjective.
\item\label{max} $\alpha(X_C^{\rm min})\subseteq X_B^{\rm min}$ and $\alpha(X_C^{\rm max})\subseteq X_B^{\rm max}$.
\item\label{intersect} $\alpha\circ T_C(x)=T_B\circ \alpha(x)$ for every $x\in \alpha^{-1}(X_B\setminus X_B^{\rm max})$. 
\end{enumerate}
\end{proposition}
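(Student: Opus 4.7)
The plan is to prove the three items in turn, with the order-preserving bijections $E_{n+1}\circ F_{n+1}\cong F_n\circ S_{f_n,f_{n+1}}$ (and their iterates $E_{0,n}\circ F_n\cong F_0\circ S_{0,f_n}$) serving as the unifying tool. For item (\ref{onto}), continuity is immediate because the first $n$ coordinates of $\alpha(x)$ are determined by $(s_1,\ldots,s_{f_n})$ through the level-$n$ bijection, so two paths in $X_C$ agreeing on the first $f_n$ coordinates have images agreeing on the first $n$ coordinates. For surjectivity I will argue by density: given a cylinder $[e_1,\ldots,e_n]$ in $X_B$, pick any $d_n\in F_n$ with $s(d_n)=r(e_n)$ (available by Definition~\ref{def61}(\ref{def61_1})), push $(e_1,\ldots,e_n,d_n)$ through the level-$n$ bijection to obtain $(d_0,s_1,\ldots,s_{f_n})$ in $F_0\circ S_{0,f_n}$, and extend it to an infinite path in $X_C$ using the non-empty source condition on $W$. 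Density together with continuity and compactness of $X_C$ then gives $\alpha(X_C)=X_B$.

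For item (\ref{max}), the key observation is that in the natural order on compositions of ordered edge sets (compare by the last coordinate, break ties by moving backwards), a composed path is minimum (respectively maximum) in its range class if and only if each of its coordinates is minimum (respectively maximum) in its own range class. Since the bijection at each level preserves order and range, every finite truncation of an $X_C^{\rm min}$ path maps to a minimum composed path on the $B$-side, which forces each $e_i$ in $\alpha(x)$ to be a minimum edge; the $X_C^{\rm max}$ case is symmetric.

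Item (\ref{intersect}) is the main technical step. If $y=\alpha(x)\notin X_B^{\rm max}$, then item (\ref{max}) forces $x\notin X_C^{\rm max}$, so both $T_C(x)$ and $T_B(y)$ are defined. Let $\ell$ and $k$ be the first non-max indices of $x$ and $y$ respectively, and choose $n$ large enough that $f_n\geq \ell$ and $n\geq k$ (possible because $(f_n)$ is unbounded). Then the first $f_n$ coordinates of $T_C(x)$ equal the ``partial Vershik successor'' of $(d_0,s_1,\ldots,s_{f_n})$ inside its range class of $F_0\circ S_{0,f_n}$ (the first non-max is $s_\ell$, and everything past position $\ell$ is unchanged); symmetrically, the first $n$ coordinates of $T_B(y)$ together with $d_n$ form the partial Vershik successor of $(e_1,\ldots,e_n,d_n)$ in its range class in $E_{0,n}\circ F_n$ (the first non-max there is $e_k$, and $d_n$ stays put). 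Because the bijection preserves order inside each range class, it intertwines these two successor operations; comparing with the defining formula for $\alpha(T_C(x))$ at level $n$ yields agreement of $\alpha(T_C(x))$ and $T_B(y)$ on the first $n$ coordinates for every sufficiently large $n$, hence everywhere. The main obstacle is the bookkeeping: one must verify that the first non-max coordinate of the composed path $(e_1,\ldots,e_n,d_n)$ really is $e_k$ and not something further right (in particular not inside $d_n$), which is precisely why one needs $n\geq k$ together with $f_n\geq \ell$.
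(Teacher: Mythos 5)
Your proposal is correct and follows essentially the same route as the paper: the continuity and surjectivity arguments for item (1) (cylinder containment, then nested nonempty compact preimages) are exactly the paper's, and for items (2) and (3) the paper simply asserts they ``follow from the definition of $\alpha$.'' Your reverse-lexicographic analysis of minima/maxima and of successors in the range classes of $E_{0,n}\circ F_n$ and $F_0\circ S_{0,f_n}$ --- including the check that the first non-maximal coordinate of $(e_1,\ldots,e_n,d_n)$ sits at position $k\leq n$ rather than inside $d_n$ --- is a correct and complete filling-in of the details the paper omits.
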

\begin{proof}
First we show that 
$ \alpha $
is continuous. Let  
$ f = ( F, (f_{n})_{n=0}^{\infty}, \leq ) $
as in Definition~\ref{def61}.
By the definition of 
$ \alpha $
if
$ x=( x_{j})_{j=1}^{\infty}, y=(y_{j})_{j=1}^{\infty} \in X_{C} $
and
$ n\in N $
satisfy 
$ x_{j} = y_{j} $
for all
$ 1 \leq j \leq f_{n}, $
then 
$ \alpha(x)_{j} = \alpha(y)_{j} $
for all 
$ 1 \leq j \leq n $
where 
$ \alpha(x) = (\alpha(x)_{j})_{j=1}^{\infty} $
and 
$ \alpha(y) = (\alpha(y)_{j})_{j=1}^{\infty}. $
Thus
\[ 
\alpha (C(x_{1},\ldots,x_{f_{n}})) \subseteq C(\alpha(x)_{1},\ldots,\alpha(x)_{n})
\]
for all $ n \in \mathbb N$.
This shows that 
$ \alpha $
is continuous.
Now we show that 
$\alpha $
is surjective.
Let 
$ z = ( z_{1},z_{2},\ldots) $
be in 
$ X_{B}$, i.e.,
an infinite path in 
$ E.$
Fix 
$ n\geq 1 $.
By Definition ~\ref{def61},  the diagram 
 \[
\xymatrix{V_{0}\ar[r]^{F_0}\ar[d]_{E_{0,n}}
 &W_0\ar[d]^{S_{0,f_n}} \\
 V_n\ar[r]_{F_n}
 &W_{f_{n}}
 }
\]
is ordered commutative, that is, 
$ F_{0} \circ S_{0,f_{n}} \cong E_{0,n} \circ F_{n} . $
Thus, there is a unique path
$ (e_{0},s_{1},\ldots,s_{f_{n}}) \in F_{0,n} \circ S_{0,f_{n}} ,$
corresponding to the path 
$ (z_{1},\ldots,z_{n},e_{n}) \in E_{0,n} \circ F_{n} . $
Let 
$ x \in X_{C} $
be any infinite path such that 
$ x_{j} = s_{j} $
for 
$ 1 \leq j \leq f_{n}. $
Then 
$ \alpha(x) \in C(z_{1},\ldots,z_{n}) $
and 
$ \alpha^{-1}(C(z_{1},\ldots,z_{n})) \neq \varnothing .$
We have: 
$$ \alpha^{-1}(C(z_{1})) \supseteq \alpha^{-1}(C(z_{1},z_{2})) \supseteq \cdots  $$
also each 
$ \alpha^{-1}(C(z_{1},\ldots,z_{n})) $
is compact, since 
$ C(z_{1},\ldots,z_{n}) $
is closed and 
$ \alpha $
is continuous.
Then by compactness of
$ X_{C} $
we have 
$$ \bigcap _ {n=1} ^ \infty \alpha^{-1}(C(z_{1},\ldots,z_{n})) \neq \varnothing .$$
Take any 
$ x \in \bigcap _ {n=1} ^ \infty \alpha^{-1}(C(z_{1},\ldots,z_{n})),$
then
$ \alpha (x) = z $
and 
$\alpha $
is surjective. Note that 
$ \alpha ^ {-1} ({z}) = \bigcap _ {n=1} ^ \infty \alpha^{-1}(C(z_{1},\ldots,z_{n})) .$

Parts (2) and (3) follow from the definition of $\alpha$.
 \end{proof}
 
\begin{definition}[\cite{aeg21}, Definition 2.10]\label{iso1}
Let $f,g:B\rightarrow C$ be two ordered premorphisms with $B=(V,E,\leq)$, $C=(W,E',\leq)$, $f=(F, (f_n)_{n\geq 0}, \leq)$, and $g=(G, (g_n)_{n\geq 0},\leq)$. It is said that $f$ is equivalent to $g$, $f\sim g$,  if for each $n\geq 0$ there is an $m\geq f_n, g_n$ such that $F_n\circ S_{f_n,m}\cong G_n\circ S_{g_n,m}$ (order isomorphism as in  part (3) of Definition~\ref{def61}). 
\end{definition}

Now we recall the notion of {\it order isomorphism} between two finite {\it ordered sets} which was previously considered in the proof of \cite[Proposition 5.2]{gh18}, and  we will need it to verify isomorphism between two ordered premorphisms.

Let $V$ and $W$ be two finite non-empty sets. We say that $F$ is an {\it ordered set of edges from $V$ to $W$} if $F$ is a finite non-empty set with a partial ordering $\leq$ on it and with surjective source and range maps, $s_F:F\rightarrow V$ and $r_F:F\rightarrow W$, such that $e, e'\in F$ are comparable if and only if $r_F(e)=r_F(e')$, and the restriction of $\leq$ to each set $r_F^{-1}(w)$, $w\in W$, is a total ordering. We use the notation $F:V\rightarrow W$. 

If $G:V\rightarrow W$ is another ordered set of edges from $V$ to $W$, then we say $F$ is order isomorphisc to $G$, $F\cong G$,  if there is a (necessarily unique) bijective map from $F$ to $G$ preserving the range and the source maps.

 If $S:W\rightarrow U$ is another ordered set of edges, then the composition  of $F$ and $S$ is 
$$F\circ S=\{(t,g)\in F\times S: \ r(t)=s(g)\}$$
endowed with the reverse lexicographisc order. Then $F\circ S:V\rightarrow U$ is an ordered set of  edges. 

\begin{lemma}\label{compat}
Let $F_1:V_1\rightarrow W_1$ , $F_2, G_2: V_2\rightarrow W_2$, $E:V_1\rightarrow  V_2$, and $S:W_1\rightarrow W_2$ be ordered sets of edges and consider the following two order commutative diagrams (one by $F_2$ and the other one by $G_2$):
\begin{center}
\begin{tikzpicture}[scale=1.2]
\node at (7,11) {$V_1$};
\node at (9,11) {$W_1$};
\node at (7,9) {$V_2$};
\node at (9.2,9) {$W_2.$};
\draw[->, thick] (7.2,9.2) [out=30, in=150] to (8.85,9.15);
\draw[->, thick] (7.2,8.8) [out=-30, in=210] to (8.85,8.88);
\draw[->, thick] (7,10.8) [out=-90, in=90] to (7,9.2);
\draw[->, thick] (9,10.8) [out=-90, in=90] to (9,9.2);
\draw[->, thick] (7.2,11) to (8.7,11);
\node at (8,11.3) {$F_1$};
\node at (8,9.7) {$F_2$};
\node at (8,8.3) {$G_2$};
\node at (6.75,10) {$E$};
\node at (9.2,10) {$S$};
\end{tikzpicture}
\end{center}
Suppose that $E\circ F_2\cong F_1\circ S\cong E\circ G_2$ with  first coordinate compatible order isomorphisms (i.e., if $\gamma:E\circ F_2\rightarrow F_1\circ S$ and $\eta:E\circ G_2\rightarrow F_1\circ S$ are the order isomorphisms, then $\gamma(e,f)=\eta(e',g)$ implies that $e=e'$, for all $(e,f)\in E\circ F_2$ and $(e',g)\in E\circ G_2$). Then $F_2\cong G_2$.
\end{lemma}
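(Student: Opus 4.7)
The plan is to construct the required order isomorphism $F_2\cong G_2$ directly from $\gamma$ and $\eta$. First I would observe that the composite
\[
H \;:=\; \eta^{-1}\circ\gamma\colon E\circ F_2 \longrightarrow E\circ G_2
\]
is an order isomorphism preserving source and range, and that the first-coordinate compatibility hypothesis says exactly that $H(e,f)=(e,g)$ for some $g\in G_2$; in other words, $H$ preserves the first coordinate.

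Next I would define $\phi\colon F_2\to G_2$ by choosing, for each $f\in F_2$, some $e$ in the (non-empty) set $r^{-1}\{s(f)\}\subseteq E$, writing $H(e,f)=(e,g)$, and setting $\phi(f):=g$. The main obstacle is to verify that $\phi(f)$ does not depend on the choice of $e$. To this end enumerate $r^{-1}\{s(f)\}\cap E=\{e_1<e_2<\cdots<e_m\}$ in the $E$-order. By the reverse lexicographic order on $E\circ F_2$, the pairs $(e_1,f),\ldots,(e_m,f)$ are $m$ consecutive elements in the fiber of $E\circ F_2$ above $r(f)$, so under the order isomorphism $H$ their images $(e_1,g_1),\ldots,(e_m,g_m)$ are $m$ consecutive elements in the fiber of $E\circ G_2$ above $r(f)$. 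If some $g_i\neq g_{i+1}$ with $i<m$, then $g_i<g_{i+1}$ in $G_2$, and in order for two such elements of $E\circ G_2$ to be reverse-lex consecutive one needs $e_i=\max(r^{-1}\{s(g_i)\}\cap E)$. But $s(g_i)=r(e_i)=s(f)$, so this maximum is $e_m$, forcing $i=m$ and contradicting $i<m$. Therefore all $g_i$ coincide and $\phi$ is well-defined.

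Finally I would verify that $\phi$ is an order isomorphism of ordered sets of edges from $V_2$ to $W_2$. Applying the same construction to $\gamma^{-1}\circ\eta$, which by symmetry also preserves the first coordinate, produces an inverse $\psi\colon G_2\to F_2$, so $\phi$ is bijective. Source and range preservation are immediate: $s(\phi(f))=r(e)=s(f)$ and $r(\phi(f))=r(H(e,f))=r(e,f)=r(f)$, using that $H$ preserves the range map. For the ordering, if $f<f'$ in a common range-fiber of $F_2$, then for any admissible $e,e'$ one has $(e,f)<(e',f')$ by reverse lex, whence $(e,\phi(f))<(e',\phi(f'))$ by order-preservation of $H$; since $\phi$ is a bijection $\phi(f)\neq\phi(f')$, and the reverse lex order on $E\circ G_2$ then forces $\phi(f)<\phi(f')$, completing the proof.
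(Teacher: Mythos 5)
Your proof is correct. It rests on the same key observation as the paper's proof --- namely that the first-coordinate compatibility hypothesis makes $H=\eta^{-1}\circ\gamma\colon E\circ F_2\to E\circ G_2$ an order isomorphism fixing the first coordinate, after which the reverse lexicographic structure forces the second coordinates to correspond --- but the implementation differs. The paper argues fiber by fiber over each $w\in W_2$ by induction on the index $i$: it identifies the minimal path $(e_1,f_1)$ with range $w$, then the $(k+1)$-th path, and so on, deducing $s_{F_2}(f_i)=s_{G_2}(g_i)$ at each step by applying the compatibility to these specific positions. You instead define $\phi\colon F_2\to G_2$ globally from $H$ and reduce everything to a single local well-definedness check: two reverse-lex consecutive elements $(e_i,f)$ and $(e_{i+1},f)$ cannot map to elements with distinct second coordinates unless $e_i$ is maximal in $r_E^{-1}(s(f))$, which forces $i=m$. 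This is arguably cleaner, since it avoids the positional induction. The only place you are slightly terse is in asserting that $\psi$ (built from $H^{-1}$, which preserves first coordinates by the same hypothesis) is inverse to $\phi$; this requires noting that $s(\phi(f))=s(f)$, so the same edge $e$ can be reused when computing $\psi(\phi(f))$, together with the well-definedness of $\psi$ --- a routine verification.
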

\begin{proof}
To prove $F_2\cong G_2$, it is enough to show that if $w\in W_2$ and $$r_{F_2}^{-1}(w)=\{f_1, \ldots, f_n\}, \ \ r_{G_2}^{-1}(w)=\{g_1,\ldots, g_m\}$$
are the two totally ordered sets associated to $F_2$ and $G_2$ respectively, then $m=n$ and $s_{F_2}(f_i)=s_{G_2}(g_i)$ for all $1\leq i\leq n$. We first show that the latter is true for $i=1$. Let $e_1, e'_1\in E$ be the minimal edges in $E$ with $r_E(e_1)=
s_{F_2}(f_1)$ and $r_E(e'_1)=s_{G_2}(g_1)$. Then $(e_1, f_1)$ (resp., $(e'_1,g_1)$) is the minimal path in $E\circ F_2$ (resp. $E\circ G_2$) with range $w$. Since $E\circ F_2\cong E\circ G_2$ as ordered sets, we get $\gamma(e_1,f_1)=\eta(e'_1, g_1)$. Applying the assumption, we see that $e_1=e'_1$. Thus $s_{F_2}(f_1)=r_E(e_1)=s_{G_2}(g_1)$.  Let $e_2, e'_2\in E$ be the minimal edges in $E$ with $r_E(e_2)=s_{F_2}(f_2)$ and $r_E(e'_2)=s_{G_2}(g_2)$. Let  $k=\#r_E^{-1}(r_E(e_1))$. Then $(e_2, f_2)$ (resp., $(e'_2, g_2)$) is the $(k+1)$-th path in $E\circ F_2$ (resp. $E\circ G_2$) with range $w$.  Again $E\circ F_2\cong E\circ G_2$ implies that $\gamma(e_2, f_2)=\eta(e'_2, g_2)$ and hence $e_2=e'_2$. In particular, $s_{F_2}(f_2)=r_E(e_2)=s_{G_2}(g_2)$. This also shows that $n\geq 2$ iff $m\geq 2$. Continuing this procedure, we get $n=m$ and $s_{F_2}(f_i)=s_{G_2}(g_i)$ for all $1\leq i\leq n$. 
\end{proof}
\begin{proposition} \label{unique1} 
 Let $B$ and $C$ be two ordered Bratteli diagrams. Suppose that $f,g:B\rightarrow C$ are two ordered premorphism between them. The surjective continuous induced maps $\mathcal V(f), \mathcal V(g):X_C\rightarrow X_B$ are the same if and only if $f$ and $g$ are equivalent in the sense of \cite[Definitions 2.8 and 2.10]{aeg21}.
\end{proposition}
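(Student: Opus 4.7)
My plan is to prove both implications by combining the uniqueness of order isomorphisms between ordered sets of edges (built into Definition~\ref{def61}) with Lemma~\ref{compat}, which is tailor-made for this situation.

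For the ($\Leftarrow$) direction, assume $f\sim g$, fix a level $n$ and a point $x=(s_1,s_2,\ldots)\in X_C$, and choose $m\geq f_n,g_n$ with an order isomorphism $\psi:F_n\circ S_{f_n,m}\cong G_n\circ S_{g_n,m}$. Composing the ordered commutative squares of the two premorphisms with $S_{f_n,m}$ and $S_{g_n,m}$ respectively yields canonical order isomorphisms
\[
\gamma:E_{0,n}\circ F_n\circ S_{f_n,m}\cong F_0\circ S_{0,m},\qquad \eta:E_{0,n}\circ G_n\circ S_{g_n,m}\cong F_0\circ S_{0,m}.
\]
Uniqueness of order isomorphisms forces the composite $\eta^{-1}\circ\gamma$ to coincide with $\operatorname{id}_{E_{0,n}}\circ\psi$, which visibly fixes the $E_{0,n}$-coordinate. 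By construction of $\mathcal V(f)$, the first $n$ coordinates of $\gamma^{-1}(d_0,s_1,\ldots,s_m)$ recover $\mathcal V(f)(x)_{[1,n]}$, and similarly for $g$; hence $\mathcal V(f)(x)_{[1,n]}=\mathcal V(g)(x)_{[1,n]}$ for every $n$, giving $\mathcal V(f)=\mathcal V(g)$.

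For the ($\Rightarrow$) direction, assume $\mathcal V(f)=\mathcal V(g)$, fix $n$, and set $m=\max(f_n,g_n)$. The same construction produces the isomorphisms $\gamma$ and $\eta$ above, and I would invoke Lemma~\ref{compat} with $E=E_{0,n}$, $F_1=F_0$, $S=S_{0,m}$, $F_2=F_n\circ S_{f_n,m}$, and $G_2=G_n\circ S_{g_n,m}$. Its conclusion is exactly $F_n\circ S_{f_n,m}\cong G_n\circ S_{g_n,m}$, i.e.\ the equivalence of $f$ and $g$ at level~$n$.

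The main obstacle is verifying the first-coordinate compatibility hypothesis of Lemma~\ref{compat}: if $\gamma(e,t)=\eta(e',t')$ in $F_0\circ S_{0,m}$, then $e=e'$. This is where the hypothesis $\mathcal V(f)=\mathcal V(g)$ enters. My plan is to identify $e,e'\in E_{0,n}$ with the first $n$ coordinates of $\mathcal V(f)(x)$ and $\mathcal V(g)(x)$ respectively, where $x\in X_C$ is any infinite extension of the common prefix $(s_1,\ldots,s_m)$ read off from $\gamma(e,t)=\eta(e',t')=(d_0,(s_1,\ldots,s_m))$. Such an extension exists because every vertex of $\cup_n W_n$ has an outgoing edge, and the equality $\mathcal V(f)(x)=\mathcal V(g)(x)$ then forces $e=e'$, closing the argument.
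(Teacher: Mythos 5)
Your proposal is correct and follows essentially the same route as the paper: the forward direction invokes Lemma~\ref{compat} with exactly the same data and verifies the first-coordinate compatibility by extending the common prefix to an infinite path and using $\mathcal V(f)=\mathcal V(g)$, just as the paper does. Your backward direction replaces the paper's explicit tracing of corresponding paths by the observation that $\eta^{-1}\circ\gamma$ must equal $\operatorname{id}_{E_{0,n}}\circ\psi$ by uniqueness of order isomorphisms; this is a slightly slicker packaging of the same argument and is valid.
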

\begin{proof}
Let $B=(V, E,\leq)$, $C=(W, S,\leq)$, and  $f=(F, (f_n)_{n\geq 0}, \leq)$, $g=(G, (g_n)_{n\geq 0}, \leq)$. Suppose that $\mathcal V(f)=\mathcal V(g)$. Fix $n\in\mathbb N$. Without loss of generality, assume that $f_n\leq g_n$. We want to show that $F_n\circ S_{f_n, g_n}\cong G_n$ or in other words, the following diagram commutes which implies that $f$ is equivalent to $g$ in the sense of \cite[Definition 2.10]{aeg21}:
\begin{center}
\begin{tikzpicture}[scale=1.2]
\node at (7,11) {$V_n$};
\node at (9.5,11) {$W_{f_n}$};
\node at (9.5,9) {$W_{g_n}.$};
\draw[->, thick] (7.2,11) to (9.1,11);
\draw[->, thick] (7.1,10.8) to (9.1, 9.2);
\draw[->, thick] (9.4,10.8) to (9.4, 9.25);
\node at (8.25,11.3) {$F_n$};
\node at (9.9, 10) {$S_{f_n,g_n}$};
\node at (7.7,10) {$G_n$};
\end{tikzpicture}
\end{center}
To show this, consider the following diagram:
\begin{center}
\begin{tikzpicture}[scale=1.2]
\node at (7,13) {$V_0$};
\node at (9.5,13) {$W_{0}$};
\node at (7,11) {$V_n$};
\node at (9.5,11) {$W_{f_n}$};
\node at (9.5,9) {$W_{g_n}.$};
\draw[->, thick] (7.2,13) to (9.1,13);
\draw[->, thick] (7.2,11) to (9.1,11);
\draw[->, thick] (7.1,10.8) to (9.1, 9.2);
\draw[->, thick] (9.4,10.8) to (9.4, 9.3);
\draw[->, thick] (9.4,12.8) to (9.4, 11.3);
\draw[->, thick] (7,12.8) to (7, 11.3);
\node at (8.25,11.3) {$F_n$};
\node at (9.8, 12) {$S_{0, f_n}$};
\node at (6.6, 12) {$E_{0, n}$};
\node at (8.2,13.2) {$F_0=G_0$};
\node at (9.87, 10) {$S_{f_n,g_n}$};
\node at (7.7,10) {$G_n$};
\end{tikzpicture}
\end{center}
Since $f$ and $g$ are ordered premorphisms, we have
$$E_{0,n}\circ G_n\cong G_0\circ S_{0,g_n}\cong F_0\circ S_{0,f_n}\circ S_{f_n,g_n}\cong E_{0,n}\circ F_n\circ S_{f_n,g_n}.$$
Put $F'_n=F_n\circ S_{f_n, g_n}$ as an ordered set of edges from $V_n$ to $W_{g_n}$. Then we get the following diagram:
\begin{center}
\begin{tikzpicture}[scale=1.2]
\node at (7,11) {$V_0$};
\node at (9,11) {$W_0$};
\node at (7,9) {$V_n$};
\node at (9.2,9) {$W_{g_n}.$};
\draw[->, thick] (7.2,9.2) [out=30, in=150] to (8.85,9.2);
\draw[->, thick] (7.2,8.8) [out=-30, in=210] to (8.85,8.88);
\draw[->, thick] (7,10.8) [out=-90, in=90] to (7,9.2);
\draw[->, thick] (9,10.8) [out=-90, in=90] to (9,9.2);
\draw[->, thick] (7.2,11) to (8.7,11);
\node at (8,11.3) {$F_0$};
\node at (8,9.7) {$F'_n$};
\node at (8,8.3) {$G_n$};
\node at (6.63,10) {$E_{0,n}$};
\node at (9.4,10) {$S_{0,g_n}$};
\end{tikzpicture}
\end{center}
The isomorphisms $E_{0,n}\circ F'_n\cong F_0\circ S_{0,g_n}\cong E_{0,n}\circ G_n$ are compatible with respect to the first coordinate. Because, if $e, e'\in E_{0,n}$, $h\in G_n$, and $h'\in F'_n$ and both paths $(e,h)\in E_{0,n}\circ G_n$ and $(e', h')\in E_{0,n}\circ F'_n$ correspond to the same path $t\in F_0\circ S_{0,g_n}$, then $e=e'$. In fact, if we write $t=s_0x_1\cdots x_{g_n}$ where $F_0=\{s_0\}$ and $x_i\in S_i$ for $1\leq i\leq g_n$, then we can find $x_i\in S_i$ for $i>g_n$ such that $x=x_1x_2\cdots\in X_C$. By the definition of $\mathcal V(g)$, we see that $e=(\mathcal V(g)(x))_{[1,n]}$, i.e., the path $e$ is the initial part of the infinite path $\mathcal V(g)(x)\in X_B$. On the other hand, there is $p\in F_n$ such that $e'p\in E_{0,n}\circ F_n$ corresponds to the path $s_0x_1\cdots x_{f_n}\in F_0\circ S_{0,f_n}$ under the ordered isomorphism $E_{0,n}\circ F_n\cong F_0\circ S_{0,f_n}$. Hence $e'px_{f_n+1}\cdots x_{g_n}$ corresponds to $s_0x_1\cdots x_{g_n}$ under the isomorphism $E_{0,n}\circ F_n\circ S_{f_n, g_n}\cong F_0\circ S_{0,g_n}$. Thus $h'=px_{f_n+1}\cdots x_{g_n}$. By the definition of $\mathcal V(f)(x)$, we see that $e'=(\mathcal V(f)(x))_{[1,n]}$. Since $\mathcal V(f)=\mathcal V(g)$, we have that $e=e'$. Applying Lemma~\ref{compat}, it follows that $G_n\cong F'_n=F_n\circ S_{f_n,g_n}$ as was desired. Therefore, $f$ is equivalent to $g$. 

For the other direction, suppose that $f$ is equivalent to $g$. Let $x=x_1x_2\cdots\in X_C$ and $n\in\mathbb N$. We show that
 $$(\mathcal V(f)(x))_{[1,n]}=(\mathcal V(g)(x))_{[1,n]}.$$
 Without loss of generality, assume that $f_n\leq g_n$. By \cite[Definition 2.10]{aeg21}, there is $k\geq g_n$ such that $F_n\circ S_{f_n,k}\cong G_n\circ S_{g_n,k}$ as in the following diagram:
 \begin{center}
\begin{tikzpicture}[scale=1.2]
\node at (7,13) {$V_0$};
\node at (9.5,13) {$W_{0}$};
\node at (7,11) {$V_n$};
\node at (9.5,11) {$W_{f_n}$};
\node at (9.5,9) {$W_{g_n}$};
\node at (9.5,7) {$W_k.$};
\draw[->, thick] (7.2,13) to (9.1,13);
\draw[->, thick] (7.2,11) to (9.1,11);
\draw[->, thick] (7.1,10.8) to (9.1, 9.2);
\draw[->, thick] (9.4,10.8) to (9.4, 9.3);
\draw[->, thick] (9.4,12.8) to (9.4, 11.3);
\draw[->, thick] (7,12.8) to (7, 11.3);
\draw[->, thick] (9.4,8.8) to (9.4, 7.3);
\node at (8.25,11.3) {$F_n$};
\node at (9.8, 12) {$S_{0, f_n}$};
\node at (6.6, 12) {$E_{0, n}$};
\node at (8.2,13.2) {$F_0=G_0$};
\node at (9.87, 10) {$S_{f_n,g_n}$};
\node at (7.7,10) {$G_n$};
\node at (9.8, 8) {$S_{g_n,k}$};
\end{tikzpicture}
\end{center}
Since $F_0\circ S_{0,f_n}\cong E_{0,n}\circ F_n$, there is $e\in E_{0,n}$ and $p\in F_n$ such that the path $ep\in E_{0,n}\circ F_n$ corresponds to $s_0x_1\cdots x_{f_n}\in F_0\circ S_{0,f_n}$. In particular, $r(p)=r(x_{f_n})$. Similarly, there is $e' \in E_{0,n}$ and $q\in G_n$ such that $e'q\in E_{0,n}\circ G_n$ corresponds to $s_0x_1\cdots x_{g_n}\in G_0\circ S_{0,g_n}$. (We assume that $G_0=F_0=\{s_0\}$.) On the other hand, since $F_n\circ S_{f_n,k}\cong G_n\circ S_{g_n,k}$, the path $px_{f_{n}+1}\cdots x_k\in F_n\circ S_{f_n,k}$ corresponds to $q'q''\in G_n\circ S_{g_n,k}$ for some $q'\in G_n$ and $q''\in S_{g_n,k}$. Therefore, both paths $eq'q''$ and $e'qx_{g_n+1}\cdots x_k$ in $E_{0,n}\circ G_n\circ S_{g_n,k}$ correspond to the same path $s_0x_1\cdots x_k\in F_0\circ S_{0,k}$ under the isomorphism $E_{0,n}\circ G_n\circ S_{g_n,k}\cong F_0\circ S_{0,k}$. This implies that  $eq'q''=e'qx_{g_n}\cdots x_k$ and so $e=e'$. Consequently, 
$$(\mathcal V(f)(x))_{[1,n]}=e=e'=(\mathcal V(g)(x))_{[1,n]} $$
and the proof is finished.
\end{proof}
\begin{remark}
The proof of the first part of Proposition~\ref{unique1} may simplify    Definition~\ref{iso1} in the following way: two ordered premorphisms $f,g:B\rightarrow C$ are equivalent if and only if for every $n\geq 0$, $F_n\circ S_{f_n,m}\cong G_n\circ S_{g_n,m}$ where $m={\rm max}(f_n,g_n)$. 
\end{remark}
 \begin{lemma}\label{kto1}
Let $B$ and $C$ be two ordered Bratteli diagrams and $f:B\rightarrow C$ be an ordered premorphism  as in Definition \ref{def61}. Let $\alpha=\mathcal V(f):X_C\rightarrow X_B$ be its induced map. If $y=y_1y_2\cdots\in X_B$ and 
 $$\exists K\geq 1\ \forall n\geq 1 \ \ \#s_f^{-1}(r(y_n))\leq K$$
 (where $s_f$ denotes the source map of the ordered premorphism $f$) then $\#\alpha^{-1}(y)\leq K$. In particular, if for every vertex $v$ on $B$, $s_f^{-1}(v)$ is a singleton then $\alpha$ is one-to-one (hence it is a homeomorphism).
 \end{lemma}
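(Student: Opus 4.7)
The plan is to unwind the construction of $\alpha$ from Definition~\ref{def61} and to count, for each level $n$, the possible initial segments of length $f_n$ of a preimage $x\in\alpha^{-1}(y)$, showing there are at most $K$ such segments. From this the bound $\#\alpha^{-1}(y)\leq K$ will follow by a standard compactness/injectivity-of-coding argument.

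Fix $y=y_1y_2\cdots\in X_B$. First I would recall that by the ordered commutativity of the square
\[
\xymatrix{V_{0}\ar[r]^{F_0}\ar[d]_{E_{0,n}}
 &W_0\ar[d]^{S_{0,f_n}} \\
 V_n\ar[r]_{F_n} &W_{f_{n}}
 }
\]
a point $x=s_1s_2\cdots\in X_C$ satisfies $\alpha(x)_{[1,n]}=(y_1,\ldots,y_n)$ if and only if there exists some $d_n\in F_n$ such that the path $(y_1,\ldots,y_n,d_n)\in E_{0,n}\circ F_n$ corresponds, under the fixed order isomorphism $E_{0,n}\circ F_n\cong F_0\circ S_{0,f_n}$, to the path $(d_0,s_1,\ldots,s_{f_n})\in F_0\circ S_{0,f_n}$. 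In particular $d_n$ is forced to have $s(d_n)=r(y_n)$, so $d_n\in s_f^{-1}(r(y_n))$, and the hypothesis gives at most $K$ admissible choices of $d_n$.

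Next I would note that once $d_n$ is chosen, the composed path $(y_1,\ldots,y_n,d_n)$ is completely determined and, via the order isomorphism, it determines a unique path $(d_0,s_1,\ldots,s_{f_n})$ in $F_0\circ S_{0,f_n}$, hence a unique initial segment $(s_1,\ldots,s_{f_n})\in S_{0,f_n}$. Consequently the set
\[
A_n\;=\;\bigl\{(x_1,\ldots,x_{f_n})\;:\;x\in\alpha^{-1}(y)\bigr\}
\]
has cardinality at most $K$. Now suppose for contradiction that $\alpha^{-1}(y)$ contains distinct points $x^{(1)},\ldots,x^{(K+1)}$. Since the sequence $(f_n)$ is unbounded and infinite paths in $X_C$ are distinguished by their finite initial segments, we can choose $n$ large enough that the segments $(x^{(i)}_1,\ldots,x^{(i)}_{f_n})$, $1\leq i\leq K+1$, are pairwise distinct. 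This contradicts $\#A_n\leq K$, and therefore $\#\alpha^{-1}(y)\leq K$.

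For the ``in particular'' claim, the assumption that $s_f^{-1}(v)$ is a singleton for every vertex $v$ of $B$ forces $K=1$, so $\alpha$ is injective. Combined with the surjectivity and continuity established in Proposition~\ref{factor1}, and since $X_C$ is compact and $X_B$ Hausdorff, $\alpha$ is a homeomorphism. The main (minor) subtlety I would be careful about is the bookkeeping connecting the $K$ choices of $d_n$ to $K$ distinct initial segments of $x$; this is handled cleanly because the isomorphism $E_{0,n}\circ F_n\cong F_0\circ S_{0,f_n}$ is a bijection of ordered edge sets, so distinct $d_n$ (with fixed $y_1,\ldots,y_n$) correspond to distinct images in $F_0\circ S_{0,f_n}$ and hence to distinct segments $(s_1,\ldots,s_{f_n})$.
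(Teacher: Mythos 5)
Your proposal is correct and follows essentially the same route as the paper: both arguments rest on the observation that distinct initial segments of preimages of $y$ must correspond, under the order isomorphism $E_{0,n}\circ F_n\cong F_0\circ S_{0,f_n}$, to distinct edges of $F_n$ with source $r(y_n)$, of which there are at most $K$. The paper phrases this as a direct contradiction (producing $m>K$ distinct such edges at a single level $\ell$), while you first bound the number of length-$f_n$ initial segments and then conclude; the combinatorial content is identical.
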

 \begin{proof}
 Suppose  to the contrary that $\#\alpha^{-1}(y)=\{x^{(1)},x^{(2)},\ldots, x^{(m)}\}$, where $m>K$. Being distinct points in $X_C$,  after an appropriate telescoping, $x^{(i)}$'s can be realized as infinite paths on $C$ such that for some sufficiently large $\ell$: $$x^{(i)}_{[1,\ell]}\neq x^{(j)}_{[1,\ell]}\ {\rm  for }\ 0\leq i, j\leq m$$ 
 where $x^{(i)}_{[1,\ell]}$ is the initial finite path of length $\ell$ of the point $x^{(i)}$.
Then by definition of $\alpha$ and  Definition \ref{def61}\eqref{def61_3}, 
$$ \exists f_1, f_2,\ldots, f_m\in F_\ell \  \ s_f(f_i)=r(y_\ell),\ r_f(f_i)=r(x^{(i)}_{[1,\ell]}),\  i=1,\ldots, m.$$
See Figure \ref{K} as an example. In particular, $f_1, f_2, \ldots, f_m$ are distinct. This contradicts the assumption. \qedhere
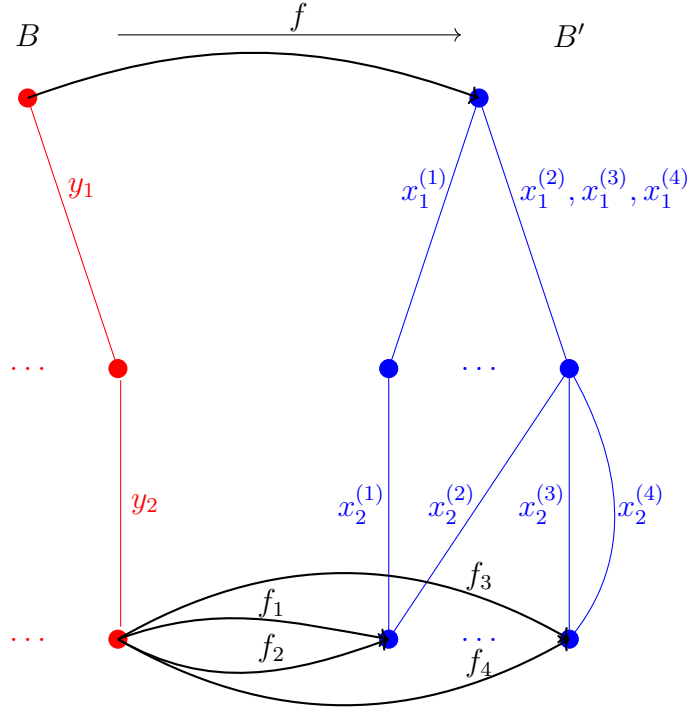
\begin{figure}
\begin{center}
\begin{tikzpicture}[scale=1.2]
{\color{blue}
\filldraw (11,10) circle [radius=0.1];
 \filldraw (10,7) circle [radius=0.1];
\filldraw (12,7) circle [radius=0.1];
 \filldraw (10,4) circle [radius=0.1];
\filldraw (12,4) circle [radius=0.1];


\draw (11,10)--(10,7);

\draw (11,10)--(12,7);

\draw (10,7)--(10,4);

\draw (12,7)--(12,4);
\draw (12,7)--(10,4);
\draw (12,7) [out=300, in=50] to (12,4);

}

{\color{red}
\filldraw (6,10) circle [radius=0.1];
\filldraw (7,7) circle [radius=0.1];
\filldraw (7,4) circle [radius=0.1];


\draw (6,10)--(7,7);
\draw (7.03,6.87)--(7.03,4.14);

}

\draw[->, thick] (6,10) [out=20, in=160] to (11,10);

\draw[->, thick] (7,4) [out=-30, in=200] to (10,4);
\draw[->, thick] (7,4) [out=20, in=170] to (10,4);
\draw[->, thick] (7,4) [out=30, in=150] to (12,4);
\draw[->, thick] (7,4) [out=330, in=210] to (12,4);


\node at (6,10.7) {$B$};
\node at (12,10.7) {$B'$};
\node at (9,10.9) {$f$};
\draw[->] (7,10.7) to (10.8,10.7);
\node at (6,7) {${\color{red} \ldots}$};
\node at (6,4) {${\color{red} \ldots}$};
\node at (11,7) {${\color{blue} \ldots}$};
\node at (11,4) {${\color{blue} \ldots}$};
\node at (6.6,9) {${\color{red} y_1}$};
\node at (7.3,5.5) {${\color{red} y_2}$};

\node at (10.4,9) {${\color{blue} x^{(1)}_1}$};
\node at (9.7,5.5) {${\color{blue} x^{(1)}_2}$};

\node at (12.4,9) {${\color{blue} x^{(2)}_1,x^{(3)}_1,x^{(4)}_1}$};

\node at (10.7,5.5) {${\color{blue} x^{(2)}_2}$};

\node at (11.7,5.5) {${\color{blue} x^{(3)}_2}$};

\node at (12.8,5.5) {${\color{blue} x^{(4)}_2}$};

\node at (8.7,4.4) {$f_1$};
\node at (8.7,3.9) {$f_2$};

\node at (11,4.7) {$f_3$};
\node at (11,3.73) {$f_4$};

\end{tikzpicture}
\end{center}
\caption{ An example illustrating the proof of Lemma~\ref{kto1}. Here $\ell=2$, $m=4$.}\label{K}
\end{figure} \end{proof}

 \begin{remark}
 In Lemma \ref{kto1}, the sufficient condition for  injectivity of $\alpha$, is equivalent to saying that for every $n$, the number of finite paths from $W_0$ to $W_{f_n}$ on diagram $C$ is equal to the number of finite paths from $V_0$ to $V_n$ on $B$. Moreover, when  $B$ and $C$ are decisive, $B$ is simple and $\alpha=\mathcal V(f):X_C\rightarrow X_B$ is a topological factoring, if there exists one infinite path $y=y_1y_2\cdots$ such that 
 $$ \forall n\geq 1\ \  \ \#s_f^{-1}(r(y_n))=1,$$
 then $(X_C,T_C)$ is an almost 1-1 extension of $(X_B,T_B)$. 
  \end{remark}

The following proposition can be proved by using ordered premorphisms. 
\begin{proposition}\label{odo}
If $B$ is a non-proper decisive ordered Bratteli diagram of rank $2$ then $(X_B, T_B)$ is conjugate to an odometer or it is a disjoint union of two odometers. 
\end{proposition}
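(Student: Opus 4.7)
The strategy is to apply the ordered-premorphism formalism developed in the earlier sections and either decompose $(X_B, T_B)$ into two non-interacting rank-one pieces or exhibit an explicit conjugacy onto a single rank-one odometer.

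First I would normalize. By telescoping, assume $|V_n|=2$ for every $n\geq 1$ and write $V_n=\{a_n,b_n\}$. Because $B$ is decisive, $\bar T_B\colon X_B\to X_B$ is a homeomorphism and therefore restricts to a bijection $X_B^{\rm max}\to X_B^{\rm min}$. Combined with non-properness and the elementary observation that in a rank-$2$ diagram each vertex has a unique max- and a unique min-predecessor (so $|X_B^{\rm max}|,|X_B^{\rm min}|\leq 2$), this forces $|X_B^{\rm max}|=|X_B^{\rm min}|=2$. After a further telescoping to a subsequence and an appropriate relabeling, the two min paths $p_1,p_2$ pass respectively through $a_n,b_n$ for every $n$, and each of the two max paths $q_1,q_2$ (by pigeonhole on vertex-choices) passes through a fixed vertex at every level. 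This yields either an \emph{aligned} configuration, where $q_i$ and $p_i$ share the same vertex-type at each level, or a \emph{crossed} one.

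If the adjacency matrices of $B$ have no cross entries, then $X_B$ splits into two clopen $\bar T_B$-invariant rank-one sub-systems $X_B^{(a)}$ and $X_B^{(b)}$, each of which is the path space of a rank-one sub-diagram and is therefore conjugate to an odometer; this gives the disjoint-union conclusion. Otherwise, I would build a rank-one Bratteli diagram $C$ (an odometer) whose level-to-level edge counts record the combined tower height $h_n:=h_n^a+h_n^b$ (the total number of finite paths from $v_0$ to $V_n$ in $B$), together with an ordered premorphism $f\colon B\to C$ attaching to each $v\in V_n$ a single edge of $F_n$ into the unique vertex $w_n$ of $C$. The linear order on $r_f^{-1}(w_n)$ is obtained by concatenating the $a_n$-tower and the $b_n$-tower in the order dictated by the decisive extension $\bar T_B$. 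The ordered commutativity required in Definition~\ref{def61}\eqref{def61_3} is then precisely the statement that the unique extension $\bar T_B$ stitches the top of one tower at level $n$ to the bottom of the appropriate tower on the way up to level $n+1$, which is exactly what decisiveness guarantees. Proposition~\ref{factor1} yields a continuous surjection $\mathcal V(f)\colon X_C\to X_B$, and since each fiber $s_f^{-1}(v)$ is a singleton by construction, Lemma~\ref{kto1} upgrades $\mathcal V(f)$ to a homeomorphism; hence $(X_B,T_B)$ is conjugate to the odometer $(X_C,T_C)$.

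The main obstacle will be the construction of the linear order on $F$ and the verification of ordered commutativity of every square of $f$ in the presence of cross edges. This is the heart of the argument, and is where decisiveness is essential: the unique values of $\bar T_B$ on the four points of $X_B^{\rm max}\cup X_B^{\rm min}$ tell us exactly how to interleave the two towers at each level into a single enumeration for $C$. A mild subtlety is that one may need a further telescoping so that the natural candidate for the edge counts of $C$ is integral; this is always achievable because $h_n$ divides $h_{n+k}$ for all sufficiently regular sub-sequences. Once the premorphism is in place, continuity, surjectivity, and injectivity of $\mathcal V(f)$ follow routinely from Proposition~\ref{factor1} and Lemma~\ref{kto1}.
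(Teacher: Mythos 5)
Your proposal is correct and follows essentially the same route as the paper's proof: after telescoping so the two max and two min paths sit over fixed vertices, you split into the aligned case (which, by continuity of $\bar{T}_B$, eventually has no cross edges and gives two disjoint odometers) and the crossed case (where you build an ordered premorphism onto a rank-one diagram whose path counts match those of $B$ and invoke Lemma~\ref{kto1} to get a conjugacy). The only cosmetic difference is that you phrase the dichotomy in terms of cross entries of the adjacency matrices rather than in terms of whether $\bar{T}_B(y^{(k)})=x^{(k)}$ or $x^{(1-k)}$, but these coincide after the continuity argument.
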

\begin{proof}
We first enumerate vertexes of each level $V_i$ by $\{v^i_0, v^i_1\}$ from left to right. Decisiveness implies that there are two infinite max paths  $y^{(k)}=y^{(k)}_1y^{(k)}_2\cdots$, $k=0,1,$ and two infinite min paths $x^{(k)}=x^{(k)}_1x^{(k)}_2\cdots$, $k=0,1,$ on $B$.  We can assume  that (in fact, after an appropriate telescoping) for each $k=0,1$, the infinite max (resp. min) path $y^{(k)}$ (resp. $x^{(k)}$) is carried by $v^i_k$, at every  level $i$. There are two possibilities for $B$:
\begin{enumerate}
\item $T_B(y^{(k)})=x^{(k)}$ for every $k=0,1$. Then  continuity of $T_B$  implies that after finitely many levels, there is not any cross edges between any two consecutive  levels of the diagram, i.e., for sufficiently large $i$,
$$\nexists e\in E_i \ \ s(e)=v^{i-1}_k, \ r(e)=v^{i}_{1-k}.$$
In other words, there is $i_0\geq 0$ such that for each infinite path ${\bf e}=e_1e_2\cdots$ on $B$ there exists $k\in\{0,1\}$ such that  for every $i\geq i_0$, $s(e_i)=v^{i-1}_k, r(e_i)=v^i_k$. 
This turns out to have two odometers, each one supported by a single vertex at each level. The blue diagram on the right side of   Figure \ref{cantorfig} (which is in fact related to Example~\ref{cantor})  is an example of this case.
\item $T_B(y^{(k)})=x^{(1-k)}$. This time (after a telescoping of the diagram along some cofinal sequence) continuity of $T_B$ forces existences of cross edges between every two levels of $B$ in a way that for every $i\geq 1$ if $$E^{(k)}_i=\{e_0=e_{\rm min},e_1,\dots,e_{\rm max}\}$$  is the set of edges ranged at $v^i_k\in V_i$, $k=0,1,$ then 
$$\forall j=0, 1,\ldots, {\rm max} \ \ s(e_{j})=v^{i-1}_{k+j\ ({\rm mod} \ 2)}.$$
See the red diagrams on the left sides of Figure \ref{rank2}  and Figure \ref{cantorfig}.
Then one can create   an ordered premorphism $f:B\rightarrow C$ where $C=(W, E')$ is a rank one Bratteli diagram (and so  its Vershik system is clearly an odometer) such that the map $\alpha=\mathcal V(f):X_C\rightarrow X_B$ makes a topological factoring between the two systems. See the example in Figure \ref{rank2}. The diagram $C$ has the property that for each $i\geq 1$ the number of finite paths from $W_0$ to $W_i$ is equal to the total number of paths from $V_0$ to $V_i=\{v^i_0, v^i_1\}$. So by Lemma \ref{kto1}, $\alpha$ is a homeomorphism. Consequently, we have a conjugacy between the two systems.  \qedhere
\end{enumerate}
\end{proof}

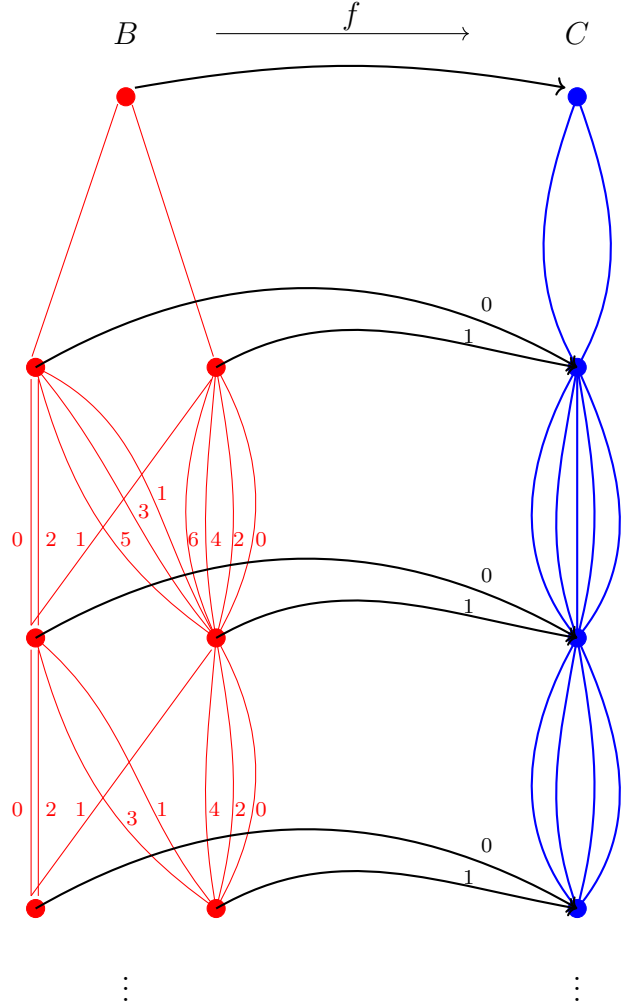
\begin{figure}
\begin{center}
\begin{tikzpicture}[scale=1.2]
{\color{blue} 
\filldraw (11,10) circle [radius=0.1];
 \filldraw (11,7) circle [radius=0.1];
\filldraw (11,4) circle [radius=0.1];
 \filldraw (11,1) circle [radius=0.1];

\draw[-, thick] (11,10) [out=250, in=120] to (11,7);
\draw[-, thick] (11,10) [out=290, in=60] to (11,7);

\draw[-, thick] (11,7) [out=240, in=130] to (11,4);
\draw[-, thick] (11,7) [out=295, in=50] to (11,4);
\draw[-, thick] (11,7) [out=260, in=110] to (11,4);
\draw[-, thick] (11,7) [out=280, in=75] to (11,4);
\draw[-, thick] (11,7) to (11,4);

\draw[-, thick] (11,4) [out=240, in=130] to (11,1);
\draw[-, thick] (11,4) [out=295, in=50] to (11,1);
\draw[-, thick] (11,4) [out=260, in=110] to (11,1);
\draw[-, thick] (11,4) [out=280, in=75] to (11,1);

}
{\color{red}
\filldraw (6,10) circle [radius=0.1];
 \filldraw (5,7) circle [radius=0.1];
\filldraw (7,7) circle [radius=0.1];
 \filldraw (5,4) circle [radius=0.1];
\filldraw (7,4) circle [radius=0.1];
 \filldraw (5,1) circle [radius=0.1];
\filldraw (7,1) circle [radius=0.1];

\draw (5.91,9.92)--(4.96,7.12);
\draw (6.07,9.91)--(6.97,7.12);

\draw (4.95,6.87)--(4.95,4.14);
\draw (4.95,3.87)--(4.95,1.14);
\draw (5.03,6.87)--(5.03,4.14);
\draw (5.03,3.87)--(5.03,1.14);

\draw (6.95,6.87)--(4.95,4.14);
\draw (6.95,3.87)--(4.95,1.14);

\draw (7,7) [out=300, in=60] to (7,4);
\draw (7,7) [out=280, in=75] to (7,4);
\draw (7,7) [out=265, in=100] to (7,4);
\draw (7,7) [out=250, in=115] to (7,4);
\draw (5,7) [out=285, in=145] to (7,4);
\draw (5,7) [out=310, in=130] to (7,4);
\draw (5,7) [out=335, in=120] to (7,4);

\draw (5,4) [out=285, in=145] to (7,1);
\draw (5,4) [out=320, in=130] to (7,1);
\draw (7,4) [out=300, in=60] to (7,1);
\draw (7,4) [out=280, in=75] to (7,1);
\draw (7,4) [out=265, in=100] to (7,1);


\node at (6.4,5.6) {\tiny{1}};
\node at (6.2,5.4) {\tiny{3}};
\node at (6,5.1) {\tiny{5}};
\node at (7.25,5.1) {\tiny{2}};
\node at (7,5.1) {\tiny{4}};
\node at (6.75,5.1) {\tiny{6}};
\node at (7.5,5.1) {\tiny{0}};


\node at (5.5,5.1) {\tiny{1}};
\node at (5.17,5.1) {\tiny{2}};
\node at (4.8,5.1) {\tiny{0}};


\node at (5.5,2.1) {\tiny{1}};
\node at (5.17,2.1) {\tiny{2}};
\node at (4.8,2.1) {\tiny{0}};

\node at (7.5,2.1) {\tiny{0}};
\node at (7.27,2.1) {\tiny{2}};
\node at (6.4,2.1) {\tiny{1}};
\node at (6.07,2) {\tiny{3}};
\node at (6.98,2.1) {\tiny{4}};

}

\draw[->, thick] (6.1,10.1) [out=10, in=170] to (10.87,10.1);

\draw[->, thick] (5,7) [out=30, in=150] to (11,7);
\draw[->, thick] (7,7) [out=30, in=170] to (11,7);

\draw[->, thick] (5,4) [out=30, in=150] to (11,4);
\draw[->, thick] (7,4) [out=30, in=170] to (11,4);

\draw[->, thick] (5,1) [out=30, in=150] to (11,1);
\draw[->, thick] (7,1) [out=30, in=170] to (11,1);


\node at (10,7.7) {\tiny{0}};
\node at (9.8,7.35) {\tiny{1}};

\node at (10,4.7) {\tiny{0}};
\node at (9.8,4.35) {\tiny{1}};

\node at (10,1.7) {\tiny{0}};
\node at (9.8,1.35) {\tiny{1}};

\node[very thick] at (6,0.2) {\vdots};
\node[very thick] at (11,0.2) {\vdots};
\node at (6,10.7) {$B$};
\node at (11,10.7) {$C$};
\node at (8.5,10.9) {$f$};
\draw[->] (7,10.7) to (9.8,10.7);
\end{tikzpicture}
\end{center}
\caption{An example illustrating the proof of Proposition~\ref{odo}. Here $\mathcal V(f): X_C\rightarrow X_B$ is a conjugacy. }\label{rank2}
\end{figure}

\bigskip

We consider a weaker version of decisiveness, called {\it semi-decisive}, to obtain more general results in studying topological factoring between two ordered Bratteli diagrams.

\begin{definition}\label{semi}
We say an ordered Bratteli diagram $B$ is  {\it semi-decisive} if the Vershik map $T_B:X_B\setminus X_B^{\rm max}\rightarrow X_B\setminus X_B^{\rm min}$ has a continuous surjective extension $\bar{T}_B:X_B\rightarrow X_B$.
\end{definition}

\begin{figure}
\begin{center}
\begin{tikzpicture}[scale=1.2]
{\color{blue}
\filldraw (11,10) circle [radius=0.1];
 \filldraw (10,7) circle [radius=0.1];
\filldraw (12,7) circle [radius=0.1];
 \filldraw (10,4) circle [radius=0.1];
\filldraw (12,4) circle [radius=0.1];
 \filldraw (10,1) circle [radius=0.1];
\filldraw (12,1) circle [radius=0.1];

\draw (10.91,9.92)--(9.96,7.12);

\draw (11.07,9.91)--(11.97,7.12);
\draw (11.17,9.91)--(12.06,7.13);

\draw (9.95,6.87)--(9.95,4.14);
\draw (9.95,3.87)--(9.95,1.14);
\draw (10.03,6.87)--(10.03,4.14);
\draw (10.03,3.87)--(10.03,1.14);

\draw (11.95,6.87)--(11.95,4.14);
\draw (11.95,3.87)--(11.95,1.14);
\draw (12.03,6.87)--(12.03,4.14);
\draw (12.03,3.87)--(12.03,1.14);

}

{\color{red}
\filldraw (6,10) circle [radius=0.1];
 \filldraw (5,7) circle [radius=0.1];
\filldraw (7,7) circle [radius=0.1];
 \filldraw (5,4) circle [radius=0.1];
\filldraw (7,4) circle [radius=0.1];
 \filldraw (5,1) circle [radius=0.1];
\filldraw (7,1) circle [radius=0.1];

\draw (5.91,9.92)--(4.96,7.12);

\draw (6.07,9.91)--(6.97,7.12);

\draw (4.95,6.87)--(4.95,4.14);
\draw (4.95,3.87)--(4.95,1.14);
\draw (5.03,6.87)--(5.03,4.14);
\draw (5.03,3.87)--(5.03,1.14);

\draw (6.95,6.87)--(4.95,4.14);
\draw (6.95,3.87)--(4.95,1.14);
\draw (7.03,6.87)--(7.03,4.14);
\draw (7.03,3.87)--(7.03,1.14);




\node at (5.5,5.1) {\tiny{1}};
\node at (5.17,5.1) {\tiny{2}};
\node at (4.8,5.1) {\tiny{0}};


\node at (5.5,2.1) {\tiny{1}};
\node at (5.17,2.1) {\tiny{2}};
\node at (4.8,2.1) {\tiny{0}};

}
\draw[->, thick] (6.1,10.1) [out=10, in=170] to (10.87,10.1);


\draw[->, thick] (5.15,7.05) [out=10, in=170] to (11.78,7.1);
\draw[->, thick] (7.15,6.95) [out=-10, in=190] to (9.85,6.88);

\draw[->, thick] (7.15,6.95) [out=40, in=156] to (11.85,7.24);



\draw[->, thick] (5.15,4.05) [out=10, in=170] to (11.78,4.1);
\draw[->, thick] (7.15,3.95) [out=-30, in=200] to (9.86,3.88);
\draw[->, thick] (7.15,3.95) [out=20, in=170] to (9.86,3.98);
\draw[->, thick] (7.15,3.95) [out=40, in=156] to (11.78,4.24);


\draw[->, thick] (5.15,1.05) [out=10, in=170] to (11.78,1.1);
\draw[->, thick] (7.15,0.95) [out=-55, in=240] to (9.85,0.86);
\draw[->, thick] (7.15,0.95) [out=-25, in=210] to (9.85,0.88);
\draw[->, thick] (7.15,0.95) [out=-10, in=190] to (9.85,0.95);
\draw[->, thick] (7.15,0.95) [out=20, in=160] to (9.85,1);

\draw[->, thick] (7.15,0.95) [out=40, in=156] to (11.78,1.24);


\node at (11.1,7.3) {\tiny{0}};
\node at (11.3,7.6) {\tiny{1}};

\node at (11.1,4.3) {\tiny{0}};
\node at (11.3,4.6) {\tiny{1}};

\node at (11.1,1.3) {\tiny{0}};
\node at (11.3,1.6) {\tiny{1}};

\node[very thick] at (6,0.2) {\vdots};
\node[very thick] at (11,0.2) {\vdots};
\node at (6,10.7) {$B$};
\node at (11.2,10.7) {$C$};
\node at (8.7,11.1) {$f$};
\draw[->] (6.5,10.7) to (10.8,10.7);
\end{tikzpicture}
\end{center}
\caption{ Related to Example \ref{cantor}. The induced map $\mathcal V(f):X_C\rightarrow X_B$ is not a topological factoring while $f:B\rightarrow C$ is an ordered premorphism.  Note that by the proof of Proposition~\ref{odo} the left diagram is conjugate to 2-odometer. }\label{cantorfig}
\end{figure}
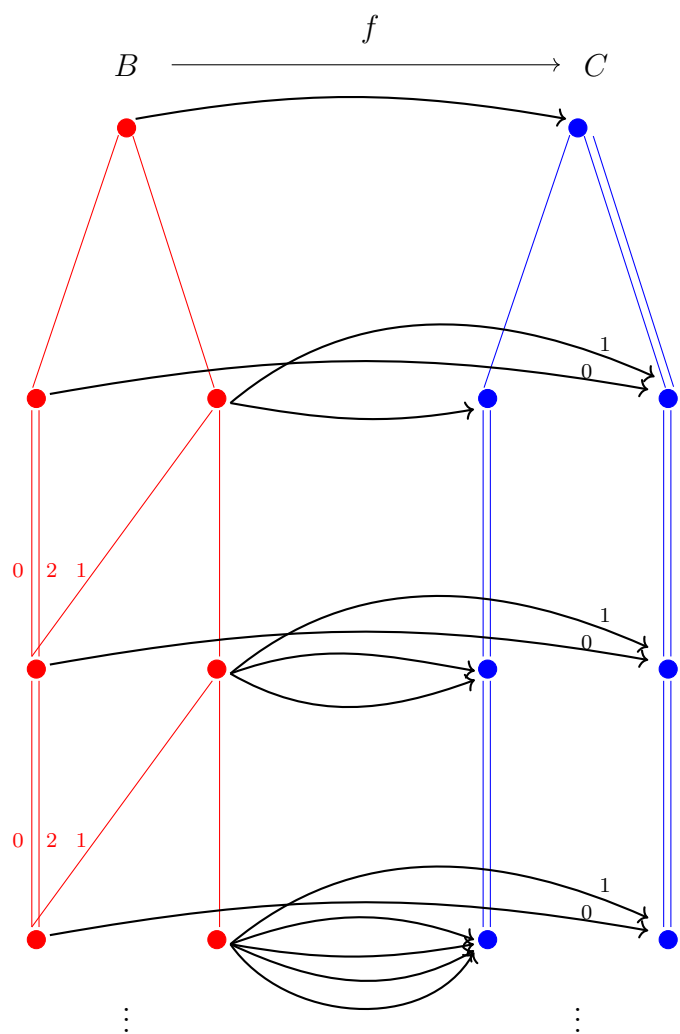

\begin{remark}
Every  decisive ordered Bratteli diagram is semi-decisive.
\end{remark}
\begin{proposition}\label{equiv}
Let $B$ and $C$ be two semi-decisive ordered Bratteli diagrams. Consider Vershik maps $\bar{T}_B:X_B\rightarrow X_B$ and $\bar{T}_C:X_C\rightarrow X_C$ such that $\bar{T}_B(X_B^{\rm max})\subseteq X_B^{\rm min}$ and  $\bar{T}_C(X_C^{\rm max})\subseteq X_C^{\rm min}$. Suppose that $f:B\rightarrow C$ is an ordered premorphism as in Definition \ref{def61}  and  $\alpha=\mathcal V(f):X_C\rightarrow X_B$ is the induced contiuous surjective map. Then $\alpha$ is a topological factoring if and only if for any $y=(e_1, e_2,\ldots)\in X_B^{\rm max}$ and  every $x=(s_1, s_2,\ldots)\in\alpha^{-1}(y)$ one of the following occurs:
\begin{enumerate} 
\item $x\in X_C^{\rm max}$ and for every $n\in\N$ there is a minimal edge $p_n\in F_n$ such that $s(p_n)=r(e'_n)$ and $r(p_n)=r(s'_{f_n})$ where $e'_n$ is the $n$-th edge of $\bar{T}_B(y)$ and $s'_{f_n}$ is the $f_n$-th edge of $\bar{T}_C(x)$.
\item $x\notin X_C^{\rm max}$ and if $k$ is the smallest number with $r(s_{f_k})=r(s'_{f_k})$ then there exist $p_1, p_2,\ldots$ in $F$ such that for any $n<k$, $p_n$ is a minimal edge in $F_n$, $s(p_n)=r(e'_n)$ and $r(p_n)=r(s'_{f_n})$, and for all $n\geq k$, $p_n$ is the successor of $d_n$ where $d_n$'s are those edges in $F$ realizing $\alpha(x)=y$ (as in the paragraph preceding \cite[Lemma 3.13]{aeg21}).
\end{enumerate}
\end{proposition}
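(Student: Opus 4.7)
The plan is to leverage Proposition~\ref{factor1}: $\alpha$ is already continuous, surjective, and satisfies $\alpha\circ T_C=T_B\circ\alpha$ on $\alpha^{-1}(X_B\setminus X_B^{\rm max})$. Hence $\alpha$ is a topological factoring iff the equivariance $\alpha\circ\bar{T}_C=\bar{T}_B\circ\alpha$ holds at every $x\in\alpha^{-1}(X_B^{\rm max})$. Conditions (1) and (2) will be shown to be the combinatorial translations of this remaining equivariance, split according to whether $x\in X_C^{\rm max}$ or not.

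Fix $y=(e_1,e_2,\ldots)\in X_B^{\rm max}$ and $x=(s_1,s_2,\ldots)\in\alpha^{-1}(y)$, and write $\bar{T}_B(y)=(e'_1,e'_2,\ldots)\in X_B^{\rm min}$ and $\bar{T}_C(x)=(s'_1,s'_2,\ldots)\in X_C$. By construction of $\alpha$, for each $n$ there is a unique $p_n\in F_n$ such that $(e'_1,\ldots,e'_n,p_n)\in E_{0,n}\circ F_n$ corresponds to $(s_0,s'_1,\ldots,s'_{f_n})\in F_0\circ S_{0,f_n}$ under the order isomorphism of Definition~\ref{def61}(\ref{def61_3}); this automatically forces $s(p_n)=r(e'_n)$ and $r(p_n)=r(s'_{f_n})$. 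So the equivariance at $x$ is equivalent, level by level, to showing that the initial segment of $\alpha(\bar{T}_C(x))$ at length $n$ equals $(e'_1,\ldots,e'_n)$, which is the sole nontrivial combinatorial statement to pin down.

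If $x\in X_C^{\rm max}$, then $\bar{T}_C(x)\in X_C^{\rm min}$, so $(s_0,s'_1,\ldots,s'_{f_n})$ is the minimum path of $F_0\circ S_{0,f_n}$ ending at $r(s'_{f_n})$. Order preservation sends minimum to minimum, forcing each $e'_i$ to be minimal in $E$ (which is automatic since $\bar{T}_B(y)\in X_B^{\rm min}$) and $p_n$ to be minimal in $F_n$; this is exactly condition (1). If instead $x\notin X_C^{\rm max}$, let $\ell$ be the smallest index with $s_\ell$ non-maximal so that $\bar{T}_C(x)=T_C(x)$ is given by the Vershik formula; then $r(s_{f_j})=r(s'_{f_j})$ holds iff $f_j\geq\ell$, so $k$ as in the statement is well-defined. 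For $n<k$ one has $f_n<\ell$, every $s'_i$ with $i\leq f_n$ is minimal, and the argument of Case~1 repeats verbatim, yielding minimal $p_n$. For $n\geq k$, both $s_1\cdots s_{f_n}$ and $s'_1\cdots s'_{f_n}$ end at the common vertex $w_n=r(s_{f_n})$, and the Vershik rule exhibits the latter as the successor of the former in the reverse-lexicographic order on paths ending at $w_n$. Transporting this successor relation through the order iso $F_0\circ S_{0,f_n}\cong E_{0,n}\circ F_n$, the path $(e'_1,\ldots,e'_n,p_n)$ is the successor of $(e_1,\ldots,e_n,d_n)$ among paths ending at $w_n$ in $E_{0,n}\circ F_n$; since all $e_i$ are already maximal in $E$, the only way to advance in reverse-lex order is through the last coordinate in $F_n$, so $d_n$ is not maximal in $r^{-1}(w_n)\cap F_n$, $p_n$ is its successor, and $e'_1,\ldots,e'_n$ are minimal in $E$ (agreeing with $\bar{T}_B(y)\in X_B^{\rm min}$). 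This is exactly condition (2).

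The converse runs the same analysis backward: given the conditions, the unique level-$n$ image $(e'_1,\ldots,e'_n,p_n)$ matches $\bar{T}_B(y)$ at each $n$, so $\alpha(\bar{T}_C(x))=\bar{T}_B(\alpha(x))$ on all of $\alpha^{-1}(X_B^{\rm max})$ and $\alpha$ becomes a topological factoring. The main obstacle is the successor-preservation argument in Case~2: one must carefully verify that the restriction of the order isomorphism to the fibre of paths ending at a fixed vertex $w_n\in W_{f_n}$ is a bijection of totally ordered sets, and then exploit the maximality of the $E$-prefix $(e_1,\ldots,e_n)$ to conclude that the successor lives in the $F_n$-coordinate rather than forcing the $e_i$'s to advance.
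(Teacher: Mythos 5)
Your proposal is correct and follows the same route as the paper: reduce via Proposition~\ref{factor1} to checking the equivariance $\alpha\circ\bar{T}_C=\bar{T}_B\circ\alpha$ only on $\alpha^{-1}(X_B^{\rm max})$, and then observe that conditions (1) and (2) are the level-by-level translation of $\alpha(\bar{T}_C(x))=\bar{T}_B(y)$ through the order isomorphisms $F_0\circ S_{0,f_n}\cong E_{0,n}\circ F_n$. In fact the paper's proof is a two-line assertion that the conditions are ``precisely translations,'' and your write-up supplies exactly the omitted combinatorial details (minimum-to-minimum and successor-to-successor preservation in the fibres over a fixed range vertex); the only blemish is the claim that $r(s_{f_j})=r(s'_{f_j})$ \emph{iff} $f_j\geq\ell$, whose ``only if'' direction can fail, but you use only the harmless implications.
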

\begin{proof}
First note that by Proposition \ref{factor1}\eqref{intersect}, $\alpha\circ T_C(x)=T_B\circ \alpha(x)$ for every $x\in \alpha^{-1}(X_B\setminus X_B^{\rm max})$. For all other points  $x\in \alpha^{-1}(X_B^{\rm max})$ (by Proposition \ref{factor1}\eqref{max}), items (1) and (2) are precisely translations of the equation $\alpha(\bar{T}_C(x))=\bar{T}_B(y)$ in terms of diagrams and ordered  premorphism $f$. 
\end{proof}

\begin{remark}
The condition of having $\bar{T}_B(X_B^{\rm max})\subseteq X_B^{\rm min}$ occurs for considerable class of Vershik systems on ordered Bratteli diagrams including decisive Bratteli diagrams. In fact, if $B$ is semi-decisive and $X_B$ has no isolated points (hence, is a Cantor set) then $$\bar{T}_B(X_B^{\rm max}\cap (X_B\setminus X_B^{\max})')\subseteq X_B^{\rm min},$$
where $A'$ denotes the set of limit points of $A$.
 In addition, if ${\rm rank}(B)<\infty$ then $\bar{T}_B(X_B^{\rm max})\subseteq X_B^{\rm min}$ which follows from the fact that  $\#X_B^{\rm max}\leq {\rm rank}(B)$. In the sequel, we will call $\bar{T}_B$ a {\it natural extension}  if it satisfies 
the condition $$\bar{T}_B(X_B^{\rm max})\subseteq X_B^{\rm min}.$$
 \end{remark}
Not every extension of a Vershik map, from $X_B\setminus X_B^{\rm max}$ to $X_B$, on a semi-decisive Bratteli diagram is  natural. For example, on the diagram of  \cite[Example 6.15]{dk16} one may define $\bar{T}_B$ on $X_B$ by mapping all the max paths (passing through the vertex $w$) to an arbitrarily chosen non-min path.

\begin{corollary}\label{seq1}
Let $B$ and $C$ be two semi-decisive ordered Bratteli diagrams such that there exist natural extensions $\bar{T}_B$ and $\bar{T}_C$ on them. Suppose that $f:B\rightarrow C$ is an ordered premorphism and  $\alpha=\mathcal V(f):X_C\rightarrow X_B$ is the induced continuous surjective map. Suppose that for every point $x\in \alpha^{-1}(X_B^{\rm max})$ there exists a sequence $(x_n)_{n\geq 1}$ in $ X_C\setminus X_C^{\rm max}$ such that $\lim_{n} x_n=x$ and $\alpha(x_n)\in X_B\setminus X_B^{\rm max}$ for all $n\in\mathbb N$. Then $\alpha$ is a topological factoring. 
\end{corollary}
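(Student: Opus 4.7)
The plan is to unpack what is actually missing: by Proposition~\ref{factor1}(1), the induced map $\alpha = \mathcal V(f)$ is already continuous and surjective, so the only thing to establish is the equivariance equation $\alpha\circ\bar T_C = \bar T_B\circ\alpha$ on all of $X_C$. By Proposition~\ref{factor1}(3) this equation already holds on $\alpha^{-1}(X_B\setminus X_B^{\max})$ (which, via Proposition~\ref{factor1}(2), is automatically contained in $X_C\setminus X_C^{\max}$, so $T_C(x)$ makes sense and coincides with $\bar T_C(x)$ there). Thus the entire problem reduces to verifying the equation on the (possibly empty) set $\alpha^{-1}(X_B^{\max})$.

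The key step is then a routine continuity argument exploiting the standing hypothesis. Fix $x\in\alpha^{-1}(X_B^{\max})$ and choose a sequence $(x_n)\subseteq X_C\setminus X_C^{\max}$ with $x_n\to x$ and $\alpha(x_n)\in X_B\setminus X_B^{\max}$ for every $n$, as guaranteed. For each such $n$, Proposition~\ref{factor1}(3) applies to $x_n$, and since $\bar T_C$ and $\bar T_B$ extend $T_C$ and $T_B$ respectively, we have
\[
\alpha\bigl(\bar T_C(x_n)\bigr) \;=\; \alpha\bigl(T_C(x_n)\bigr) \;=\; T_B\bigl(\alpha(x_n)\bigr) \;=\; \bar T_B\bigl(\alpha(x_n)\bigr).
\]

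Passing to the limit and invoking continuity of $\alpha$ (Proposition~\ref{factor1}(1)) together with the continuity of the extensions $\bar T_C$ and $\bar T_B$ guaranteed by semi-decisiveness (Definition~\ref{semi}), both sides converge and yield $\alpha(\bar T_C(x)) = \bar T_B(\alpha(x))$, as required. Combined with the earlier case this gives full equivariance, so $\alpha$ is a genuine topological factoring between $(X_C,\bar T_C)$ and $(X_B,\bar T_B)$.

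There is essentially no obstacle here beyond bookkeeping: the hypothesis on the approximating sequence $(x_n)$ is tailored exactly so that both $T_C$ and $T_B$ are simultaneously in their original domains along the sequence, letting Proposition~\ref{factor1}(3) be applied termwise and then propagated to the limit. The only place one must be careful is to note that $x_n\notin X_C^{\max}$ (so $T_C(x_n)=\bar T_C(x_n)$) and $\alpha(x_n)\notin X_B^{\max}$ (so $T_B(\alpha(x_n))=\bar T_B(\alpha(x_n))$) are both explicitly built into the hypothesis, which is what allows Proposition~\ref{factor1}(3) to be invoked at every stage.
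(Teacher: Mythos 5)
Your proposal is correct and follows essentially the same route as the paper's own proof: reduce to the set $\alpha^{-1}(X_B^{\rm max})$ via Proposition~\ref{factor1}(2)--(3), apply the equivariance identity termwise along the approximating sequence, and pass to the limit using continuity of $\alpha$, $\bar T_C$, and $\bar T_B$. No gaps.
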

\begin{proof}
Let $x\in X_C$. If $x\notin\alpha^{-1}(X_B^{\rm max})$ then $x\in X_C\setminus X_C^{\rm max}$, by Proposition~\ref{factor1}\eqref{max}. Hence $\alpha\circ T_C(x)=T_B\circ\alpha(x)$, by Proposition~\ref{factor1}\eqref{intersect}. Now suppose that $x\in\alpha^{-1}(X_B^{\rm max})$. By assumption there is a sequence $(x_n)_{n\geq 1}$ in $X_C$ such that $x_n\rightarrow x$ and $\alpha(x_n)\in X_B\setminus X_B^{\rm max}$. Thus $x_n\notin\alpha^{-1}(X_B^{\rm max})$ and hence $\alpha\circ T_C(x_n)=T_B\circ \alpha(x_n)$ for all $n\in\mathbb N$. Letting $n\rightarrow\infty$ and using continuity of $\alpha, T_C$, and $T_B$, we get $\alpha T_C(x)=T_B\alpha(x)$. Therefore, $\alpha$ is a topological factoring.
\end{proof}

\begin{example}\label{cantor}
In Figure~\ref{cantorfig},  both diagrams $B$ and $C$ are decisive and non-proper. The Vershik map of the left diagram is minimal. The  map $f:B\rightarrow C$ is an ordered premorphism but its induced map $\alpha$ is not  a topological factoring. In fact, $X_C=X_{C_1}\cup X_{C_2}$ and for every $x\in X_{C_1}$ (the left wing) the factoring equation fails. Note that both diagrams are Cantor sets. Moreover, the preimages of the left max path of the left diagram $B$ under the map $\alpha$ are contained in $X_C\setminus X_C^{\rm max}$. In other words, there is an infinite max path on the left diagram with no max path preimage. 
\end{example}

\begin{proposition}\label{simple}
Let $B$ and $C$ be two semi-decisive  ordered Bratteli diagrams such that  there exist natural extensions $\bar{T}_B$ and $\bar{T}_C$ on $X_B$ and $X_C$ respectively. Suppose that  $f:B\rightarrow C$ is an ordered premorphism with the induced continuous surjective map $\alpha=\mathcal V(f):X_C\rightarrow X_B$. If $X_C$ has no isolated points (in particular, if $C$ is simple and $X_C$ is non-trivial)  and ${\#\alpha^{-1}(X_B^{\rm max})}<\infty$ (in particular, if $B$ is of finite rank and $\alpha$ is finite-to-one) then $\alpha$ is a topological factoring.
\end{proposition}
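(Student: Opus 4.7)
The plan is to verify the hypothesis of Corollary~\ref{seq1}, namely that every $x\in\alpha^{-1}(X_B^{\rm max})$ can be approached by a sequence $(x_n)$ in $X_C\setminus X_C^{\rm max}$ whose images under $\alpha$ avoid $X_B^{\rm max}$. Once this is done, Corollary~\ref{seq1} delivers the factoring property immediately, so the substance of the proof reduces to the construction of this approximating sequence.

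Fix $x\in\alpha^{-1}(X_B^{\rm max})$. By hypothesis the set $\alpha^{-1}(X_B^{\rm max})$ is finite, so it has no accumulation points. Since $X_C$ has no isolated points, $x$ is an accumulation point of $X_C$, and hence in every neighbourhood of $x$ there are infinitely many points of $X_C$; only finitely many of them lie in $\alpha^{-1}(X_B^{\rm max})$. Using a countable neighbourhood basis of $x$ (which exists because $X_C$ is metrizable), I can therefore extract a sequence $(x_n)_{n\geq 1}$ with $x_n\to x$ and $x_n\notin\alpha^{-1}(X_B^{\rm max})$ for all $n$, i.e.\ $\alpha(x_n)\in X_B\setminus X_B^{\rm max}$. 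By Proposition~\ref{factor1}\eqref{max} we have $\alpha(X_C^{\rm max})\subseteq X_B^{\rm max}$, so $x_n\notin\alpha^{-1}(X_B^{\rm max})$ forces $x_n\notin X_C^{\rm max}$. This is exactly what Corollary~\ref{seq1} requires, and the main assertion follows.

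It remains to justify the two ``in particular'' clauses. If $C$ is simple and $X_C$ is non-trivial, then $X_C$ is a Cantor set and so has no isolated points, giving the first hypothesis. If $B$ has finite rank, then as recalled in the remark following Proposition~\ref{equiv} one has $\#X_B^{\rm max}\leq {\rm rank}(B)<\infty$; combined with the assumption that $\alpha$ is finite-to-one, this yields $\#\alpha^{-1}(X_B^{\rm max})<\infty$ as needed.

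The argument is essentially a topological density computation; there is no real obstacle beyond observing that the combined assumptions (no isolated points in $X_C$, finite preimage of $X_B^{\rm max}$) are exactly what is needed to feed into Corollary~\ref{seq1}. The only subtlety to be careful about is the use of Proposition~\ref{factor1}\eqref{max} to ensure that the approximating points $x_n$ themselves lie in $X_C\setminus X_C^{\rm max}$, rather than only in $X_C$.
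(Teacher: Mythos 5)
Your proposal is correct and follows essentially the same route as the paper: both reduce the statement to Corollary~\ref{seq1} by choosing a sequence converging to $x$ inside $X_C\setminus\alpha^{-1}(X_B^{\rm max})$, which is possible because $X_C$ has no isolated points and $\alpha^{-1}(X_B^{\rm max})$ is finite. Your explicit check (via Proposition~\ref{factor1}) that the approximating points lie in $X_C\setminus X_C^{\rm max}$ is a detail the paper leaves implicit, so no gap.
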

\begin{proof}
We use Corollary~\ref{seq1}.   Let $x\in \alpha^{-1}(X_B^{\rm max})$. Since $X_C$ has no isolated points and $\alpha^{-1}(X_B^{\rm max})$ is finite, there exists a sequence  $(x_n)_{n\geq 1}$ of distinct points in $X_C\setminus \alpha^{-1}(X_B^{\rm max})$ such that $x_n\rightarrow x$. 
Thus  $\alpha(x_n)\in X_B\setminus X_B^{\rm max}$ for all $n\geq 1$. Now Corollary~\ref{seq1} implies that $\alpha$ is a topological factoring.
\end{proof}

\begin{Question}
Let $B$ and $C$ be two (semi-)decisive simple (hence minimal) ordered Bratteli diagrams such that  there exist natural extensions  $\bar{T}_B$ and $\bar{T}_C$ on $X_B$ and $X_C$ respectively. Suppose that $f:B\rightarrow C$ is an ordered premorphism and  $\alpha=\mathcal V(f):X_C\rightarrow X_B$ is the induced continuous surjective map. Is the map $\alpha$  a topological factoring?
\end{Question}

\section{Ordered Premorphisms Versus Topological Factoring}\label{sec8}

\subsection{Construction.}\label{construction}
Throughout this section, unless otherwise stated explicitly, we assume that  $B=(V,\, E,\, \geq)$ is an ordered Bratteli diagram, $z\in X_B^{\rm min}$ and $y\in X_B^{\rm max}$. We construct an ordered Bratteli diagram $B'=(V',\,E'\,\geq')$ and an ordered premorphism $f:B\rightarrow B'$ such that there is $x\in X_{B'}$ which is not an infinite max path and the induced continuous map $\alpha=\mathcal V(f):X_{B'}\rightarrow X_B$ satisfies $$\alpha(x)=y\ \ \ \ {\rm and} \ \ \ \alpha(T_{B'}x)=z.$$
\smallskip
Let $V=\cup_{n=0}^\infty V_n$, $E=\cup_{n=1}^\infty E_n$, $k_n:=\#V_n,$ and $V_n=\{v_1^n,\dots, v_{k_n}^n\}$ for $n\geq 1$ and $V_0=\{v_0\}$. We are going to add a vertex $v_0^n$ at each level $V_n$, $n\geq 1$. So
$$V'_0=V_0,\ \ V'_n=\{v_0^n\}\cup\{V_n\}, n\geq 1.$$
To define $E'$ using $z$ and $y$ suppose that $z=z_1z_2\cdots$ and $y=y_1y_2\cdots$ where $z_n, y_n\in E_n$ for $n\geq 1$.
Let $r^{-1}(r(z_1))=\{g_{1,1},\dots, g_{1,\ell_1}\}$ and $r^{-1}(r(y_1))=\{h_{1,1},\dots, h_{1,m_1}\}$ where 
$g_{1,1}< g_{1,2}<\cdots< g_{1,\ell_1}$ and $h_{1,1}< h_{1,2}<\cdots< h_{1,m_1}$. 
Thus $g_{1,1}=z_1$ and $h_{1,m_1}=y_1$. 
For each $g_{1,j}$ and $h_{1,i}$ we consider the edges $g'_{1,j}$ and $h'_{1,i}$ on $E'_1$ with the range $v_0^1$ and similar sources as $g_{1,j}$ and $h_{1,i}$. So
$$E'_1=r^{-1}(v^1_0)\cup E_1,$$
$$r^{-1}(v^1_0)=\{h'_{1,1},\dots, h'_{1,m_1}, g'_{1,1}, \dots, g'_{1,\ell_1}\}. $$
The ordering on $r^{-1}(v^1_0)$ is as written above.
Now let $n\geq 2$ and define $E'_n$ as follows. Let 
$r^{-1}(r(z_n))=\{g_{n,1},\dots, g_{n,\ell_n}\}$ and $r^{-1}(r(y_n))=\{h_{n,1},\dots, h_{n,m_n}\}$ as ordered sets. Thus $g_{n,1}=z_n$ and $h_{n,m_n}=y_n$. As before, for each $g_{n,j}$ and $ h_{n,i}$ we consider distinct edges $g'_{n,j}$ and $h'_{n,i}$ for $E'_n$ except for $i=m_n$ and $j=n$ that we identify $h'_{n,m_n}$ and $g'_{n,1}$. In other words,
$$E'_n=r^{-1}(v_0^n)\cup E_n,$$
$$r^{-1}(v_0^n)= \{h'_{n,1}, \ldots, h'_{n,m_n}=g'_{n,1},  g'_{n,2}, \ldots, g'_{n,\ell_n}\}.$$
The ordering on $r^{-1}(v^n_0)$ is as written above.
The exception we made for $h'_{n,m_n}$ makes the following distinction for the source of this edge. In fact, for every 
$1\leq i< m_n$ and $1< j\leq \ell_n$  we have $s(h'_{n,i})=s(h_{n,i})$ and $s(g'_{n,j})=s(g_{n,j})$ but $s(h'_{n,m_n})=v_0^{n-1}$. So $h'_{n,m_n}$ is the only edge in $E'_n$ with the source $v_0^{n-1}$ for $n\geq 2$. 
Now consider the infinite path $x=x_1x_2\cdots$ where $x_n=h'_{n,m_n}$ for $n\geq 1$. Having $x_1=h'_{1,m_1}< g'_{1,1},$ one can conclude that $x$ is not an infinite max path on $X_{B'}$ and $T_{B'}(x)=g'_{1,1}x_2x_3\cdots$.

\smallskip
Let us define an ordered premorphism $f:B\rightarrow B'$ as follows. Set $f=(F, (f_n)_{n=0}^\infty,\geq)$ where $f_n=n$ for all $n\geq 0$ and $F=\cup_{n=0}^\infty F_n$ so that
$F_0$ has only one edge, $(0, v_0)$, with the source $v_0\in V_0$ and with the range $v_0\in V'_0$ and for every $n\geq 1$, $F_n$ is an ordered set from $V_n$ to $V'_n$ where $\# F_n=\#V_n+2$ such that for every $v\in V_n$ there is exactly one edge in $F_n$, say $(n,v)$, from $v\in V_n$ to $v\in V'_n$ and there are two other edges $s_n$ and $s'_n$ in $F_n$ such that 
$$s(s_n)=r(z_n), \ s(s'_n)=r(y_n), \ r(s_n)=r(s'_n)=v_0^n\ \ {\rm and }\ \ s'_n< s_n$$ 
as in Figure~\ref{sn}.
\begin{figure}
\begin{center}
\begin{tikzpicture}[scale=1.1]
{\color{blue}

\filldraw (10,4) circle [radius=0.1];
\node at (10.32,4.0) {\tiny{$v_0^n$}};

}
{\color{red}
\filldraw (5,6) circle [radius=0.1];
\node at (5,3.7) {\tiny{$r(z_n)$}};

\filldraw (7,6) circle [radius=0.1];
 \filldraw (5,4) circle [radius=0.1];
\filldraw (7,4) circle [radius=0.1];
\node at (7,3.7) {\tiny{$r(y_n)$}};

\draw (4.95,6.0)--(4.95,4.14);
\node at (7.2,5) {\tiny{$y_n$}};
\draw (7.03,6.0)--(7.03,4.14);
\node at (4.8,5) {\tiny{$z_n$}};
}
\draw[->, thick] (5.15,4.05) [out=10, in=170] to (9.85,4.1);
\draw[->, thick] (7.15,3.95) [out=-10, in=190] to (9.85,3.88);

\node at (9.6,4.3) {\tiny{1}};
\node at (9.6,3.99) {\tiny{0}};
\node at (8.6,4.4) {\tiny{$s_n$}};
\node at (8.6,3.96) {\tiny{$s'_n$}};

\end{tikzpicture}
\end{center}
\caption{$s_n$ and $s'_n$ are the edges in the premorphism with range $v_0^n$.}\label{sn}
\label{fig1}
\end{figure}
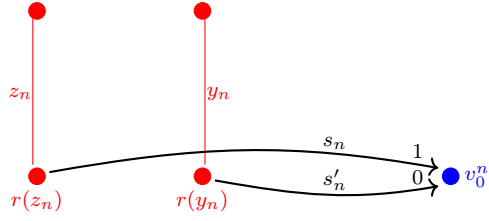
Thus 
$$F_n=\{(n,v_j^n):\ 1\leq j\leq k_n\}\cup\{s_n,s'_n\}$$
and
$$s((n,v_j^n))=v_j^n\in V_n,\  r((n,v_j^n))=v_j^n\in V'_n.$$

As there is only one edge in $F_n$ with range $v_j^n$ for every $1\leq j\leq k_n$, the ordering on $r^{-1}(v_j^n)=\{(n,v_j^n)\}$ is trivial.

\begin{figure}
\begin{center}
\begin{tikzpicture}[scale=1.2]
{\color{blue}
\filldraw (12,10) circle [radius=0.1];
\filldraw (10,7) circle [radius=0.1];
\filldraw (12,7) circle [radius=0.1];
\filldraw (14,7) circle [radius=0.1];
\filldraw (10,4) circle [radius=0.1];
\filldraw (12,4) circle [radius=0.1];
\filldraw (14,4) circle [radius=0.1];
\filldraw (10,1) circle [radius=0.1];
\filldraw (12,1) circle [radius=0.1];
\filldraw (14,1) circle [radius=0.1];

\draw (11.65,9.90) [out=190, in=85]  to (9.9,7.12);
\draw (11.68,9.85) [out=210, in=70]  to (9.93,7.05);
\draw (11.71,9.80) [out=240, in=50]  to (9.96,6.98);
\draw (11.73,9.72) [out=265, in=40]  to (9.99,6.91);

\draw (12,9.68)--(12,7.12);
\draw (12.08,9.68)--(12.08,7.12);

\draw (12.35,9.75)--(14.0,7.05);
\draw (12.41,9.80)--(14.05,7.1);

\draw (10,6.87)--(10,4.14);
\draw (12,6.87)--(12,4.14);
\draw (13.98,6.87)--(13.98,4.14);
\draw (14.06,6.87)--(14.06,4.14);
\draw (11.97,6.85)--(10.08,4.1);
\draw (13.95,6.85)--(10.18,4.1);
\draw (12.08,6.85)--(12.09,4.14);

\draw (10,3.87)--(10,1.14);
\draw (12,3.87)--(12,1.14);
\draw (13.98,3.87)--(13.98,1.14);
\draw (14.06,3.87)--(14.06,1.14);
\draw (11.97,3.85)--(10.08,1.1);
\draw (13.95,3.85)--(10.18,1.1);
\draw (12.08,3.85)--(12.09,1.14);


\node at (11.93,7.35) {\tiny{0}};
\node at (12.15,7.35) {\tiny{1}};

\node at (13.70,7.33) {\tiny{0}};
\node at (14.05,7.33) {\tiny{1}};

\node at (10.2,9) {\tiny{$0$}};
\node at (10.7,9) {\tiny{$1$}};
\node at (11.1,8.9) {\tiny{$2$}};
\node at (11.5,8.9) {\tiny{$3$}};

\node at (10.35,8.4) {\tiny{$x_1$}};
\node at (11,8.4) {\tiny{$(T_{B'}x)_1$}};

\node at (9.92,4.35) {\tiny{1}};
\node at (10.17,4.37) {\tiny{2}};
\node at (10.4,4.35) {\tiny{0}};

\node at (11.93,4.35) {\tiny{0}};
\node at (12.15,4.35) {\tiny{1}};

\node at (13.91,4.38) {\tiny{0}};
\node at (14.15,4.38) {\tiny{1}};

\node at (10,5.7) {\tiny{$x_2, (T_{B'}x)_2$}};

\node at (9.92,1.35) {\tiny{1}};
\node at (10.17,1.37) {\tiny{2}};
\node at (10.4,1.35) {\tiny{0}};

\node at (11.93,1.35) {\tiny{0}};
\node at (12.15,1.35) {\tiny{1}};

\node at (13.91,1.38) {\tiny{0}};
\node at (14.15,1.38) {\tiny{1}};

\node at (10,2.7) {\tiny{$x_3, (T_{B'}x)_3$}};

}
{\color{red}
\filldraw (6,10) circle [radius=0.1];
 \filldraw (5,7) circle [radius=0.1];
\filldraw (7,7) circle [radius=0.1];
 \filldraw (5,4) circle [radius=0.1];
\filldraw (7,4) circle [radius=0.1];
 \filldraw (5,1) circle [radius=0.1];
\filldraw (7,1) circle [radius=0.1];

\draw (5.91,9.92)--(4.96,7.12);
\draw (5.98,9.88)--(5.05,7.13);

\draw (6.07,9.91)--(6.97,7.12);
\draw (6.17,9.91)--(7.06,7.13);

\draw (4.95,6.87)--(4.95,4.14);
\draw (4.95,3.87)--(4.95,1.14);
\draw (5.03,6.87)--(5.03,4.14);
\draw (5.03,3.87)--(5.03,1.14);

\draw (6.95,6.87)--(6.95,4.14);
\draw (6.95,3.87)--(6.95,1.14);
\draw (7.03,6.87)--(7.03,4.14);
\draw (7.03,3.87)--(7.03,1.14);


\node at (5.06,7.7) {\tiny{0}};
\node at (5.32,7.7) {\tiny{1}};
\node at (6.75,7.5) {\tiny{0}};
\node at (7.05,7.5) {\tiny{1}};

\node at (6.7,8.7) {\tiny{$y_1$}};
\node at (5.38,8.7) {\tiny{$z_1$}};

\node at (6.89,4.39) {\tiny{0}};
\node at (7.1,4.39) {\tiny{1}};
\node at (5.12,4.39) {\tiny{1}};
\node at (4.89,4.39) {\tiny{0}};

\node at (7.16,5.7) {\tiny{$y_2$}};
\node at (4.84,5.7) {\tiny{$z_2$}};

\node at (6.89,1.39) {\tiny{0}};
\node at (7.1,1.39) {\tiny{1}};
\node at (5.12,1.39) {\tiny{1}};
\node at (4.89,1.39) {\tiny{0}};

\node at (7.16,2.7) {\tiny{$y_3$}};
\node at (4.84,2.7) {\tiny{$z_3$}};
}
\draw[->, thick] (6.1,10.1) [out=10, in=170] to (11.9,10.1);


\draw[->, thick] (5.15,7.05) [out=10, in=170] to (9.85,7.1);
\draw[->, thick] (7.15,6.95) [out=-10, in=190] to (9.85,6.88);

\draw[->, thick] (5.15,7.16) [out=20, in=156] to (11.85,7.24);

\draw[->, thick] (7.15,6.8) [out=-30, in=200] to (13.66,6.9);


\draw[->, thick] (5.15,4.05) [out=10, in=170] to (9.85,4.1);
\draw[->, thick] (7.15,3.95) [out=-10, in=190] to (9.85,3.88);

\draw[->, thick] (5.15,4.16) [out=20, in=156] to (11.78,4.24);

\draw[->, thick] (7.15,3.8) [out=-30, in=200] to (13.66,3.9);


\draw[->, thick] (5.15,1.05) [out=10, in=170] to (9.85,1.1);
\draw[->, thick] (7.15,0.95) [out=-10, in=190] to (9.85,0.88);

\draw[->, thick] (5.15,1.16) [out=20, in=156] to (11.78,1.24);

\draw[->, thick] (7.15,0.8) [out=-30, in=200] to (13.66,0.9);


\node at (9,6.95) {\tiny{0}};
\node at (9,7.4) {\tiny{1}};

\node at (9,3.95) {\tiny{0}};
\node at (9,4.4) {\tiny{1}};

\node at (9,1.4) {\tiny{1}};
\node at (9,0.9) {\tiny{0}};

\node[very thick] at (6,0.2) {{\color{red}\vdots}};
\node[very thick] at (12,0.2) {{\color{blue}\vdots}};
\node at (6,10.7) {$B$};
\node at (12,10.7) {$B'$};
\node at (9,10.9) {$f$};
\draw[->] (7,10.7) to (10.8,10.7);
\end{tikzpicture}
\end{center}
\caption{An example of the construction in Subsection~\ref{construction}}\label{ex}
\end{figure}
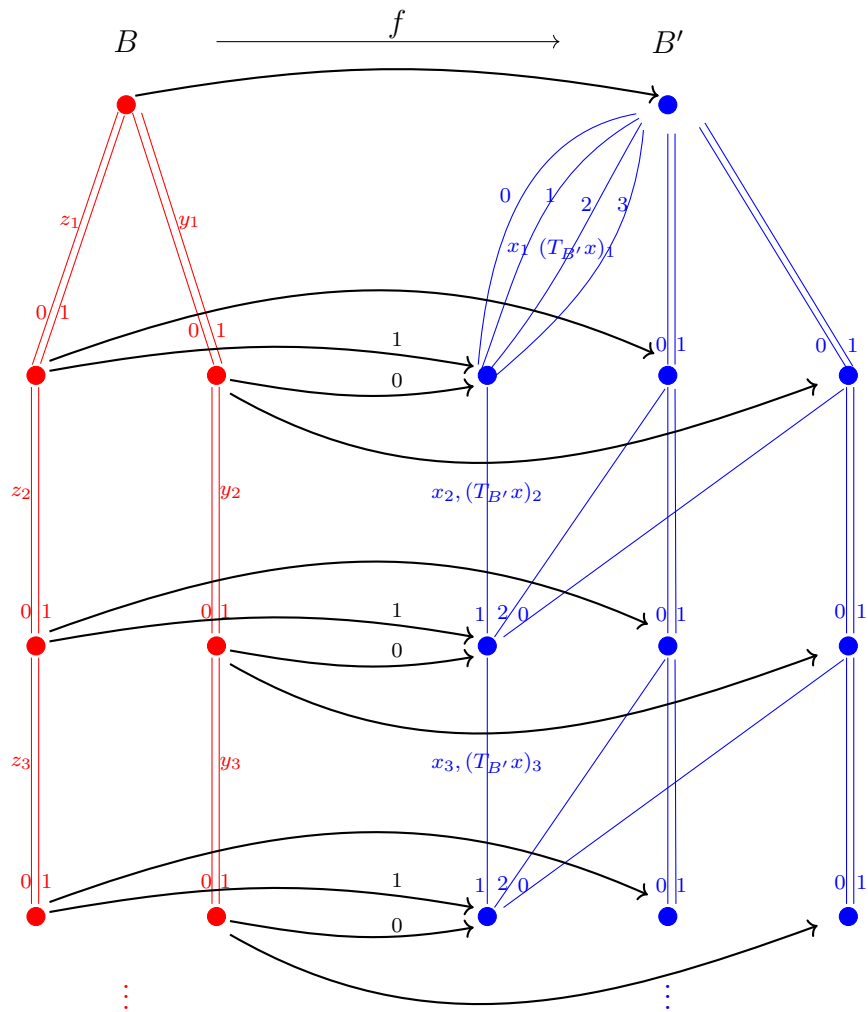

\begin{remark}
It is worth mentioning that  in the sequel, when we apply  the above method to construct an ordered Bratteli diagram $B'$ for a given ordered Bratteli diagram $B$, we choose $y\in X_B^{\rm max}  $ and $z\in X_B^{\rm min}$ such that $\bar{T}_B(y)\neq z$.
\end{remark}

\begin{lemma}
Let $B$ be an ordered Bratteli diagram and $B'$ and $f:B\rightarrow B'$ be as above. Then $f$ is an ordered premorphism.
\end{lemma}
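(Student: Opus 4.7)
The plan is to verify the three conditions of Definition~\ref{def61}. Conditions (1) and (2) are essentially immediate from the construction: each $F_n$ is finite and nonempty; by construction every $v\in V_n$ is the source of $(n,v)\in F_n$, so $s^{-1}\{v\}\neq\emptyset$; every original $v_j^n\in V'_n$ is the range of the single edge $(n,v_j^n)$, while the new vertex $v_0^n$ is the range of $s_n$ and $s'_n$, so $r^{-1}\{w\}\neq\emptyset$ for every $w\in V'$; and the linear order on each $r^{-1}\{w\}$ is trivial when $w=v_j^n$ (singleton) and is $s'_n<s_n$ when $w=v_0^n$.

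The substantive step is the ordered commutativity $E_{n+1}\circ F_{n+1}\cong F_n\circ E'_{n+1}$ for every $n\geq 0$. I will check it range by range in $V'_{n+1}$. For an original vertex $v_j^{n+1}$, the set $r^{-1}(v_j^{n+1})$ in $F_{n+1}$ is the singleton $\{(n+1,v_j^{n+1})\}$, and $r^{-1}(v_j^{n+1})$ in $E'_{n+1}$ equals $r^{-1}(v_j^{n+1})$ in $E_{n+1}$ with the same order, so both sides are in order-preserving bijection with $r^{-1}(v_j^{n+1})\cap E_{n+1}$. For the new vertex $v_0^{n+1}$, the left-hand side $r^{-1}(v_0^{n+1})$ in $E_{n+1}\circ F_{n+1}$ consists, in reverse-lexicographic order (since $s'_{n+1}<s_{n+1}$), of
\[
(h_{n+1,1},s'_{n+1}),\ldots,(h_{n+1,m_{n+1}},s'_{n+1}),(g_{n+1,1},s_{n+1}),\ldots,(g_{n+1,\ell_{n+1}},s_{n+1}),
\]
whose sources in $V_n$ are $s(h_{n+1,1}),\ldots,s(h_{n+1,m_{n+1}-1}),s(y_{n+1}),s(z_{n+1}),s(g_{n+1,2}),\ldots,s(g_{n+1,\ell_{n+1}})$.

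Meanwhile, for $n\geq 1$, the right-hand side enumerates $r^{-1}(v_0^{n+1})$ in $E'_{n+1}$ as $h'_{n+1,1},\ldots,h'_{n+1,m_{n+1}-1},h'_{n+1,m_{n+1}}=g'_{n+1,1},g'_{n+1,2},\ldots,g'_{n+1,\ell_{n+1}}$; for every $h'_{n+1,i}$ with $i<m_{n+1}$ there is a single pair $((n,s(h_{n+1,i})),h'_{n+1,i})$, for every $g'_{n+1,j}$ with $j>1$ there is a single pair $((n,s(g_{n+1,j})),g'_{n+1,j})$, and for the identified edge $h'_{n+1,m_{n+1}}=g'_{n+1,1}$ (whose source is $v_0^n$) there are exactly two pairs $(s'_n,h'_{n+1,m_{n+1}})<(s_n,h'_{n+1,m_{n+1}})$ contributed by $r^{-1}(v_0^n)=\{s'_n,s_n\}\subseteq F_n$. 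Listing sources in the reverse-lexicographic order of these pairs, I obtain $s(h_{n+1,1}),\ldots,s(h_{n+1,m_{n+1}-1}),r(y_n),r(z_n),s(g_{n+1,2}),\ldots,s(g_{n+1,\ell_{n+1}})$. The match with the left-hand list then reduces to the path identities $s(y_{n+1})=r(y_n)$ and $s(z_{n+1})=r(z_n)$, which are exactly the condition that $y$ and $z$ are infinite paths in $B$; this yields the required order isomorphism in the middle two positions. The $n=0$ case is simpler because $F_0=\{(0,v_0)\}$ is a singleton and $h'_{1,m_1},g'_{1,1}$ are distinct edges with sources $s(y_1),s(z_1)\in V_0=\{v_0\}$, so the bijection is immediate.

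I expect the main subtlety to be bookkeeping of the ordering at the "collision" edge $h'_{n+1,m_{n+1}}=g'_{n+1,1}$: one has to recognize that identifying these two edges in $E'_{n+1}$ is precisely what reconciles the two extra pairs $(y_{n+1},s'_{n+1}),(z_{n+1},s_{n+1})$ produced on the left by the two new $F$-edges $s'_{n+1},s_{n+1}$ with the two pairs $(s'_n,\cdot),(s_n,\cdot)$ forced on the right by the two $F_n$-edges with range $v_0^n$. Once this correspondence is aligned, reverse-lexicographic comparisons on each side coincide and the order isomorphism is forced.
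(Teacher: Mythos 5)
Your proof is correct and follows essentially the same route as the paper's: both verify ordered commutativity range by range in $V'_{n+1}$, splitting into the trivial case of an original vertex $v_j^{n+1}$ and the case of the new vertex $v_0^{n+1}$, where the two extra $F$-edges $s'_{n+1},s_{n+1}$ on one side are matched against the two $F_n$-edges over the identified edge $h'_{n+1,m_{n+1}}=g'_{n+1,1}$ on the other. Your write-up is slightly more explicit than the paper's in recording that the source match at the collision positions reduces to the path identities $s(y_{n+1})=r(y_n)$ and $s(z_{n+1})=r(z_n)$, but this is an elaboration of the same argument, not a different one.
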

\begin{proof} We only need to check the ordered commutativity of $f$. Let $n\geq 1$ and let $v\in V'_n$. Recall that $V'_n=\{v_0^n\}\cup V_n$. First let $v\neq v_0^n$. Suppose that $r^{-1}(v)=\{e_1,\dots, e_k\}\subseteq E_n$ as an ordered set. Then the set of paths in $E_n\circ F_n$ with range $v\in V'_n$ is $$\{(e_1,(n,v)), (e_2,(n,v)), \dots, (e_n,(n,v))\}$$ as an ordered set where $(n,v)$ is the only edge of $F_n$ going from $v\in V_n$ to $v\in V_n$. On the other hand, the set of paths in $F_{n-1}\circ E'_n$ with range $v\in V'_n$ is
$$\big\{((n-1, s(e_1)), e_1), ((n-1, s(e_2)), e_2), \dots, ((n-1, s(e_k)), e_k)\big\}$$
as an ordered set. Thus these two sets are ordered isomorphic. Now let $v=v_0^n$. Then the ordered set of paths in $E_n\circ F_n$ with range $v_0^n$ is 
$$\big\{(h_{n,1},s'_n),(h_{n,2},s'_n),\dots, (h_{n,m_n},s'_n), (g_{n,1},s_n), (g_{n,2},s_n),\dots, (g_{n,\ell_n},s_n)\big\}.$$
Moreover, the ordered set of paths in $F_{n-1}\circ E'_n$ with range $v_0^n$ is 
\begin{eqnarray*}
\big\{((n-1,s(h_{n,1})),h'_{n,1}),\dots, ((n-1,s(h_{n,m_n-1})),h'_{n,m_n-1}),\\
 (s'_{n-1},h'_{n,m_n}), (s_{n-1}, h'_{n,m_n}), ((n-1, s(g'_{n,2})),g'_{n,2}),\dots, ((n-1, s(g'_{n,\ell_n})),g'_{n,\ell_n})\big\}
\end{eqnarray*}
for $n\geq 2$. These two sets are ordered isomorphic. For $n=1$ the number of paths in $E_1\circ F_1$ with range $v^1_0$ and those in $F_0\circ E'_1$ with range $v^1_0$ are both $\ell_1+m_1$. Consequently, 
$$E_n\circ F_n\cong F_{n-1}\circ E'_n,\ \forall n\geq 1$$ as ordered sets.
\end{proof}

We summarize the construction of $B'$ and $f$ and their properties as follows.

\begin{theorem}\label{base}
Let $B$ be an ordered Bratteli diagram, $z\in X_B^{\rm min}$ and $y\in X_B^{\rm max}$. Consider  the associated  ordered Bratteli diagram $B'$ to $B$  with an infinite path $x\in X_{B'}$ together with the ordered premorphism $f:B\rightarrow B'$ as  in the previous lemma. Then for the induced continuous surjection $\alpha=\mathcal V(f):X_{B'}\rightarrow X_B$ the following assertions hold:
\begin{enumerate}
\item \label{nofactor}  $x\in X_{B'}\setminus X_{B'}^{\rm max}$,
$$\alpha(x)=y\ \ {\rm and}\ \ \alpha(T_{B'}x)=z.$$
\item $X_B\subseteq X_{B'}$ and $\alpha \upharpoonright_{X_B}={\rm id}$.
\item $B'$ is not simple and $f$ is stationary at all levels. (See Figure~\ref{ex} as an example.)
\item $x\in X_{B'}^{\rm min}$ if and only if $y\in X_B^{\rm min}$. Moreover, $T_{B'}x\in X_{B'}^{\rm max}$ if and only if $z\in X_B^{\rm max}$. 
\item\label{isolated}  Every point  in $X_{B'}\setminus X_B$ is isolated in $X_{B'}$. Moreover, if $w$ is an isolated point of $X_{B}$ then it is isolated in $X_{B'}$.
\end{enumerate}

\end{theorem}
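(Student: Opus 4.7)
The plan is to verify each item by unpacking the explicit construction of $B'$ and $f$ from Subsection~\ref{construction}, with the main work being a careful tracking of paths through the order isomorphism $F_0\circ E'_{0,n}\cong E_{0,n}\circ F_n$ that defines $\alpha$.

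For \eqref{nofactor}, observe first that $x_1=h'_{1,m_1}$ has strict successor $g'_{1,1}$ in $r^{-1}(v_0^1)$, so $x\in X_{B'}\setminus X_{B'}^{\max}$ and $T_{B'}x=(g'_{1,1},x_2,x_3,\ldots)$. To compute $\alpha(x)=y$, I fix $n\geq 1$ and locate the position of the path $(d_0,x_1,\ldots,x_n)$ in the ordered set of paths in $F_0\circ E'_{0,n}$ with range $v_0^n$. The key point is that for $k\geq 2$ the edge $h'_{k,m_k}$ is the \emph{only} edge in $E'_k$ with source $v_0^{k-1}$, so the initial segments of $x$ are forced, and by the ordering on $r^{-1}(v_0^k)$ they occupy position $m_k$. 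An induction using the reverse lexicographic ordering shows that this path corresponds, under the isomorphism of Definition~\ref{def61}\eqref{def61_3}, to $(y_1,\ldots,y_n,s'_n)\in E_{0,n}\circ F_n$; hence $(\alpha(x))_n=y_n$ for all $n$. The same positional reasoning applied to $T_{B'}x$ (whose first edge $g'_{1,1}$ sits at position $m_1+1$ and is paired with $s_n$ rather than $s'_n$) gives that it corresponds to $(z_1,\ldots,z_n,s_n)$, so $\alpha(T_{B'}x)=z$. Item (2) follows at once: since $E_n\subseteq E'_n$ we have $X_B\subseteq X_{B'}$, and for $y\in X_B$ the unique edge in $F_n$ with range $r(y_n)\in V_n\subseteq V'_n$ is $(n,r(y_n))$ (with source also $r(y_n)$), so the correspondence forces $(\alpha(y))_n=y_n$.

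Item (3) is immediate from the construction: the vertex $v_0^n$ admits a single outgoing edge, namely $h'_{n+1,m_{n+1}}$, whose range is $v_0^{n+1}$; consequently the column of the adjacency matrix of $B'$ indexed by $v_0^n$ has exactly one nonzero entry, and no product of adjacency matrices of $B'$ can be strictly positive, so $B'$ is not simple. That $f_n=n$ and the same template is used to construct each $F_n$ gives stationarity. For (4), the position of $x_n=h'_{n,m_n}$ in $r^{-1}(v_0^n)$ is $m_n$, so $x_n$ is minimal iff $m_n=1$, iff $|r^{-1}(r(y_n))|=1$, iff $y_n$ is minimal; this happens for every $n$ precisely when $y\in X_B^{\min}$. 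Likewise, the first edge $g'_{1,1}$ of $T_{B'}x$ and the subsequent edges $x_n=g'_{n,1}$ ($n\geq 2$) are maximal in their respective $r^{-1}(v_0^n)$ iff $\ell_n=1$, iff $z_n$ is maximal.

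The most delicate item is \eqref{isolated}. Any $w\in X_{B'}\setminus X_B$ must first pass through some $v_0^{n_0}$; since $v_0^k$ has the unique outgoing edge $h'_{k+1,m_{k+1}}$ (with range $v_0^{k+1}$), the tail of $w$ beyond level $n_0$ is forced, so $w$ is entirely determined by the finite initial segment $w_1\cdots w_{n_0}$ and the cylinder $C(w_1,\ldots,w_{n_0})$ in $X_{B'}$ equals $\{w\}$; hence $w$ is isolated. For the second statement, let $w\in X_B$ be isolated in $X_B$ and choose $n_0$ with $C_B(w_1,\ldots,w_{n_0})=\{w\}$. In $X_{B'}$ the cylinder $C_{B'}(w_1,\ldots,w_{n_0})$ consists of $w$ together with the finitely many ``escape'' paths that leave $r(w_{n_0})$ through some edge in $r^{-1}(v_0^{n_0+1})$ with source $r(w_{n_0})$; each such escape path is isolated in $X_{B'}$ by the preceding argument, so subtracting these finitely many singletons from the clopen cylinder produces a clopen neighbourhood of $w$ equal to $\{w\}$, establishing that $w$ is isolated in $X_{B'}$. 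The principal obstacle in the proof lies in item \eqref{nofactor}: maintaining the reverse lexicographic ordering of $E_{0,n}\circ F_n$ and $F_0\circ E'_{0,n}$ under the correspondence across successive levels requires careful inductive bookkeeping of the positions of $x_k$ in $r^{-1}(v_0^k)$ and of the ``gluing'' identifications $h'_{k,m_k}=g'_{k,1}$.
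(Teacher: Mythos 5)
Your overall strategy---direct verification of each item from the explicit construction, with the order-isomorphism bookkeeping for $F_0\circ E'_{0,n}\cong E_{0,n}\circ F_n$ carrying item (1)---is the natural one (the paper states the theorem as a summary of the construction and supplies no separate proof), and items (1)--(4) together with the first half of (5) are handled correctly: the pairings $(s'_{n-1},h'_{n,m_n})\leftrightarrow(y_n,s'_n)$ and $(s_{n-1},h'_{n,m_n})\leftrightarrow(z_n,s_n)$ from the ordered-commutativity check are exactly what your positional induction tracks.

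There is, however, a genuine gap in the second half of item (5). You describe $C_{B'}(w_1,\ldots,w_{n_0})$ as $\{w\}$ together with the escape paths that leave through an edge of $r^{-1}(v_0^{n_0+1})$ with source $r(w_{n_0})$. But an escape can occur at \emph{every} level $k>n_0$: a path may follow $w$ up to level $k-1$ and only then take an edge of $r^{-1}(v_0^{k})$ with source $r(w_{k-1})$. All such paths lie in the cylinder, and since the level-$k$ escape agrees with $w$ on its first $k-1$ edges, escapes at infinitely many levels would accumulate at $w$ and destroy isolation; so the statement you actually need---and silently assume---is that the total number of escape paths over \emph{all} levels $k>n_0$ is finite. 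This is true but requires an argument. Since $C_B(w_1,\ldots,w_{n_0})=\{w\}$ and every finite path in $E$ extends to an infinite one, we get $s^{-1}(r(w_{k-1}))\cap E_k=\{w_k\}$ for all $k>n_0$; hence an escape edge $h'_{k,i}$ ($i<m_k$) or $g'_{k,j}$ ($j\geq 2$) with source $r(w_{k-1})$ forces $h_{k,i}=w_k$, resp.\ $g_{k,j}=w_k$, i.e.\ $r(w_k)=r(y_k)$ with $w_k\neq y_k$, resp.\ $r(w_k)=r(z_k)$ with $w_k\neq z_k$. In the first case $y_{k+1}\in s^{-1}(r(w_k))=\{w_{k+1}\}$ forces $y_j=w_j$ for all $j>k$, after which $w_j=h_{j,m_j}$ is maximal and no further escape of $h$-type can occur; the $g$-type is symmetric. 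Thus escapes exist at no more than two levels above $n_0$, with finitely many edges each, and only with this in hand does ``subtract finitely many isolated singletons from the clopen cylinder'' produce the clopen set $\{w\}$.
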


\subsection{From Ordered Premorphism to Topological Factoring}

\begin{proposition}\label{factor3}
Suppose that $B$ and $C$ are two semi-decisive ordered Bratteli diagrams such that $X_B$ has a unique infinite min path. Then for every ordered premorphism $f:B\rightarrow C$ the induced map $\alpha=\mathcal V(f):X_C\rightarrow X_B$ is a topological factoring, that is $\alpha\circ \bar{T}_C=\bar{T}_B\circ \alpha$. 
\end{proposition}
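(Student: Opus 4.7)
The plan is to verify $\alpha\circ\bar{T}_C=\bar{T}_B\circ\alpha$ pointwise on $X_C$, splitting it into $\alpha^{-1}(X_B\setminus X_B^{\max})$ and $\alpha^{-1}(X_B^{\max})$. On $\alpha^{-1}(X_B\setminus X_B^{\max})$ the work is already done: Proposition~\ref{factor1}\eqref{max} forces such points into $X_C\setminus X_C^{\max}$, so $\bar{T}_C$ and $\bar{T}_B$ agree there with $T_C$ and $T_B$, and Proposition~\ref{factor1}\eqref{intersect} delivers the identity. All of the content is therefore in the case $\alpha(x)=y\in X_B^{\max}$, and I would let $z_0$ denote the unique element of $X_B^{\min}$. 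Naturality of $\bar{T}_B$ gives $\bar{T}_B(X_B^{\max})\subseteq\{z_0\}$, so $\bar{T}_B(y)=z_0$, and the goal reduces to showing $\alpha(\bar{T}_C(x))=z_0$.

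When $x\in X_C^{\max}$, naturality of $\bar{T}_C$ places $\bar{T}_C(x)\in X_C^{\min}$, whence Proposition~\ref{factor1}\eqref{max} (applied to min paths) yields $\alpha(\bar{T}_C(x))\in X_B^{\min}=\{z_0\}$. The main obstacle is the remaining sub-case $x\notin X_C^{\max}$ with $\alpha(x)\in X_B^{\max}$: here $\bar{T}_C(x)=T_C(x)\notin X_C^{\min}$, so the ``min goes to min'' shortcut is unavailable and the argument must be combinatorial. I would use the ordered isomorphisms $E_{0,n}\circ F_n\cong F_0\circ S_{0,f_n}$ from Definition~\ref{def61} to show that $\alpha(T_C(x))_{[1,n]}$ is a minimal path in $E_{0,n}$ for \emph{every} $n$; uniqueness of the infinite min path of $B$ then forces $\alpha(T_C(x))=z_0$.

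To execute that combinatorial step, write $x=(s_1,s_2,\ldots)$ and let $\ell$ be the smallest index with $s_\ell$ non-maximal, so $T_C(x)$ begins with $\ell-1$ minimal edges followed by $s_\ell+1,s_{\ell+1},\ldots$. For $n$ with $f_n<\ell$ the initial segment $T_C(x)_{[1,f_n]}$ is the minimal path to its terminal vertex in $S_{0,f_n}$, which corresponds under the order isomorphism to the minimal element of $E_{0,n}\circ F_n$ with the same range, so its first $n$ coordinates are all minimal. For $n$ with $f_n\ge\ell$ the segment $T_C(x)_{[1,f_n]}$ is the reverse-lex successor of $x_{[1,f_n]}$ among paths ending at $r(s_{f_n})$, and this successor relation transfers through the isomorphism to $E_{0,n}\circ F_n$; the pair $(y_{[1,n]},d_n)$ corresponding to $x_{[1,f_n]}$ has $y_{[1,n]}$ all-maximal but is itself non-maximal (since $\ell\le f_n$), which forces $d_n$ to be non-maximal in $F_n$, so the successor increments $d_n$ to $d_n+1$ and resets the $E_{0,n}$-coordinate to the minimal path ending at $s(d_n+1)$. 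In either range of $n$ the first $n$ coordinates of the corresponding element are minimal, so $\alpha(T_C(x))$ is a min path, hence equals $z_0$ by uniqueness, and the proof is complete.
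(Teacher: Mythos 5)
Your proof is correct and follows essentially the same route as the paper's: the same three-way case split ($\alpha(x)\notin X_B^{\max}$; $x\in X_C^{\max}$; $x\notin X_C^{\max}$ with $\alpha(x)\in X_B^{\max}$), with the first two cases handled by Proposition~\ref{factor1} and naturality exactly as in the paper. The only difference is that where the paper simply asserts ``from the definition of $\alpha$ it follows that $\alpha T_C(x)$ is a min path,'' you supply the full combinatorial justification via the order isomorphisms $E_{0,n}\circ F_n\cong F_0\circ S_{0,f_n}$ and the successor analysis in reverse-lexicographic order, which is a correct and welcome expansion of that step.
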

\begin{proof}
 We show that $\alpha\circ \bar{T}_C(x)=\bar{T}_B\circ \alpha(x)$ for every point $x\in X_C$. By Proposition \ref{factor1}(3), this equation holds for every point $x\in \alpha^{-1}(X_B\setminus X_B^{\rm max})$. Now suppose that $x\in X_C^{\rm max}$. Then $\alpha(x)\in X_B^{\rm max}$ by Proposition \ref{factor1}(1). 
Let $z$ be the unique min path of $X_B$. Since $\bar{T}_B$ is a natural extension $\bar{T}_B(\alpha(x))=z$. Moreover, as $\bar{T}_C$ is also a natural extension, one can conclude that $\bar{T}_C(x)\in X_C^{\rm min}$. Hence, $\alpha\bar{T}_C(x)\in X_B^{\rm min}=\{z\}$ by Proposition \ref{factor1}(2). Thus $\alpha\circ\bar{T}_C(x)=\bar{T}_B\circ\alpha(x)$.

Now assume that $x\in (X_C\setminus X_C^{\rm max})\setminus\alpha^{-1}(X_B\setminus X_B^{\rm max})$. Then $\alpha(x)\in X_B^{\rm max}$ but $x\notin X_C^{\rm max}$. From the definition of $\alpha$ it follows that $\alpha T_C(x)$ is a min path. 
Therefore, $\alpha T_C(x)=z$ and as $\bar{T}_B$ is a natural extension we have $\bar{T}_B(\alpha(x))=z$. In conclusion, $\alpha T_C(x)=T_B\alpha(x)$. 
\end{proof}

\begin{definition}
Let $B$ be an ordered Bratteli diagram. An infinite path $w=e_1e_2\cdots$ in $X_B$ is said to be {\it  eventually maximal} (resp., {\it minimal}) if there exists an infinite max (resp., minimal) path in its forward (resp., backward) orbit. In other words,  there exists $m\geq 1$ such that $e_n={\rm maximal}$ (resp., {\rm mininmal}) edge for every $n\geq m$.
\end{definition}

\begin{lemma}\label{lemmain}
Let $B$ be an ordered Bratteli diagram. Suppose that $B'$, $f:B\rightarrow B'$, $z\in X_B^{\rm min}$, $y\in X_B^{\rm max}$,  and $\alpha=\mathcal V(f):X_{B'}\rightarrow X_B$ are  as in Subsection \ref{construction}. Let $\bar T_B:X_B\rightarrow X_B$ be a continuous extension of the Vershik map $T_B:X_B\setminus X_B^{\rm max}\rightarrow X_B\setminus X_B^{\rm min}$. Then 
\begin{enumerate} 
\item\label{existence} there is a continuous extension $\bar T_{B'}:X_{B'}\rightarrow X_{B'}$ of the Vershik map $T_{B'}:X_{B'}\setminus X_{B'}^{\rm max}\rightarrow X_{B'}\setminus X_{B'}^{\rm min}$ such that
$\bar T_{B'}\upharpoonright_{X_B}=\bar T_B$.
\item\label{unique} If $z$ is not eventually maximal  then $X_{B'}^{\rm max}=X_B^{\rm max}$ and $\bar T_{B'}$ in (1) is unique.
\item If $z$ is eventually maximal  then $X_{B'}^{\rm max}\setminus X_B^{\rm max}$ is a singleton, say $\{w_0\}$, and  for any other extension $S_{B'}$ of $T_{B'}$, $S_{B'}(w)=\bar T_{B'}(w)$ for all $w\in X_{B'}\setminus\{w_0\}$.
\item \label{semi} If $B$ is semi-decisive then so is $B'$.
\item $\alpha(\bar T_{B'}(w))=\bar T_B(\alpha(w))$ for all $w\in X_{B'}\setminus\{x\}$.

\end{enumerate}
\end{lemma}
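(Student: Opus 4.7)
The plan is to begin with a structural analysis of $X_{B'}$. The decisive observation is that $h'_{n,m_n}$ is the unique edge in $E'_n$ with source $v_0^{n-1}$ (for $n\geq 2$), so once a path $w\in X_{B'}$ takes an edge with range $v_0^n$ at some level $n$ its tail is forced: $w_k=h'_{k,m_k}$ for all $k>n$. Hence every $w\in X_{B'}$ either lies in $X_B$ or first enters $v_0^{n_0}$ at a unique level $n_0\geq 1$ and is determined for all levels $>n_0$. Parts \eqref{unique} and (3) follow by characterising maximality: $h'_{k,m_k}$ is the maximum of $r^{-1}(v_0^k)$ iff $\ell_k=1$, which is equivalent to $z_k$ being simultaneously min and max. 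Therefore, if $z$ is not eventually maximal no such $w$ can lie in $X_{B'}^{\max}$, giving $X_{B'}^{\max}=X_B^{\max}$; otherwise there is a unique $n_0$ (the largest level with $\ell_{n_0}>1$, or $n_0=1$ in the degenerate case where $\ell_k=1$ for every $k$) such that the unique max finite path in $B$ from $v_0$ to $s(g_{n_0,\ell_{n_0}})$, prolonged by $g'_{n_0,\ell_{n_0}}$ and then the forced tail, produces the unique $w_0\in X_{B'}^{\max}\setminus X_B^{\max}$. The forced-tail property also shows $w_0$ is isolated in $X_{B'}$.

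For part \eqref{existence}, define $\bar T_{B'}(w):=T_{B'}(w)$ on $X_{B'}\setminus X_{B'}^{\max}$, $\bar T_{B'}(w):=\bar T_B(w)$ on $X_B^{\max}$, and assign $\bar T_{B'}(w_0)$ freely (refined later for part (5)). Consistency on $X_B\setminus X_B^{\max}$ uses that the local ordering at any $v\in V_n\subset V'_n$ was not altered by the construction, so $T_{B'}\upharpoonright_{X_B\setminus X_B^{\max}}=T_B$ and hence $\bar T_{B'}\upharpoonright_{X_B}=\bar T_B$. Continuity on $X_{B'}\setminus X_{B'}^{\max}$ is a standard Vershik property, and continuity at the isolated point $w_0$ is automatic. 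The nontrivial check is continuity at $w\in X_B^{\max}$: given $w^{(j)}\to w$ in $X_{B'}$, the subsequence lying in $X_B$ is handled by continuity of $\bar T_B$. For the subsequence with $w^{(j)}\in X_{B'}\setminus X_B$, necessarily non-max and entering $v_0^{n_j}$ with $n_j\to\infty$, I construct a surrogate $\tilde w^{(j)}\in X_B\setminus X_B^{\max}$ by truncating $w^{(j)}$ at level $n_j-1$ and prolonging inside $B$: when the first non-max index $\ell_j$ of $w^{(j)}$ satisfies $\ell_j<n_j$ any prolongation works, and when $\ell_j\geq n_j$ I route the prolongation through the source vertex $s(g_{\ell_j,2})\in V_{\ell_j-1}$ of the relevant Vershik successor. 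In both cases $T_B(\tilde w^{(j)})_{[1,k]}=T_{B'}(w^{(j)})_{[1,k]}$ for all sufficiently large $j$ (fixed $k$) and $\tilde w^{(j)}\to w$, so continuity of $\bar T_B$ yields $T_{B'}(w^{(j)})\to\bar T_B(w)$.

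Parts \eqref{semi} and (5) are then essentially corollaries. For \eqref{semi}, semi-decisiveness of $B'$ follows from continuity of $\bar T_{B'}$ combined with surjectivity: points of $X_B$ are covered by $\bar T_B$, generic points of $X_{B'}\setminus X_B$ lie in the $T_{B'}$-image by its Vershik formula, and preimages for the newly introduced min/max points (including $w_0$ when applicable) can be chosen by hand. For (5), Proposition~\ref{factor1}\eqref{intersect} handles $w\in\alpha^{-1}(X_B\setminus X_B^{\max})$; for $w\in X_B^{\max}\subseteq X_{B'}^{\max}$ both sides equal $\bar T_B(w)$ because $\alpha\upharpoonright_{X_B}=\mathrm{id}$ and $\bar T_{B'}\upharpoonright_{X_B}=\bar T_B$; for $w=w_0$ (if $w_0\neq x$) I refine the previously free choice to $\bar T_{B'}(w_0)\in\alpha^{-1}(\bar T_B(\alpha(w_0)))$, a non-empty set by surjectivity of $\alpha$. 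The exclusion of $x$ in (5) is dictated by the construction itself, since $\alpha(T_{B'}x)=z\neq\bar T_B(y)=\bar T_B(\alpha(x))$ by the standing convention stated before Theorem~\ref{base}.

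The main obstacle is the continuity verification at $w\in X_B^{\max}$ in the sub-case $\ell_j\geq n_j$, where the Vershik bump of $w^{(j)}$ in $B'$ happens inside the new $v_0^\bullet$-column and the surrogate $\tilde w^{(j)}\in X_B$ must be arranged to realise the same source vertex $s(g_{\ell_j,2})$ as the bumped edge; showing that such a prolongation always exists uses the explicit description of the added edges together with continuity of $\bar T_B$ applied to the resulting $X_B$-sequence.
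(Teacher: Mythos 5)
Your proposal is correct and follows essentially the same route as the paper: the same structural analysis of $X_{B'}$ (forced tails after entering the $v_0^\bullet$-column, the single new maximal path $w_0$ and its isolatedness), the same piecewise definition of $\bar T_{B'}$ with the value at $w_0$ chosen afterwards to make part (5) work, and the same reduction of continuity at $w\in X_B^{\rm max}$ to comparing $T_{B'}(w^{(j)})$ with the Vershik image of a surrogate sequence inside $X_B$. The only remark worth making is that the paper takes the surrogate to be $\alpha(w^{(j)})$ itself, which uniformly handles the boundary sub-case $\ell_j=n_j$ in which the Vershik bump occurs at the entry level and lands on an edge other than $g'_{\ell_j,2}$ (e.g.\ $h'_{n_j,i}\mapsto h'_{n_j,i+1}$, or onto $h'_{n_j,m_{n_j}}$ with source $v_0^{n_j-1}$); your stated prolongation recipe does not literally cover this sub-case, but the required prolongation is just the corresponding $B$-edge, so this is a presentational slip rather than a gap.
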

\begin{proof}
We prove (1). Consider the Vershik homeomorphism $T_{B'}:X_{B'}\setminus X_{B'}^{\rm max}\rightarrow X_{B'}\setminus X_{B'}^{\rm min}$. Note that $B$ is a subdiagram of $B'$ and there are no edges in $E'$ with source in $V'\setminus V$ and range in $V$. Thus the successor of any edge or finite path in $E$ is the same as in $E'$ which turns out to have  $X_B^{\rm max}\subseteq X_{B'}^{\rm max}$ and $X_B^{\rm min} \subseteq X_{B'}^{\rm min}$. So $T_{B'}$ is an extension of $T_B:X_B\setminus X_B^{\rm max}\rightarrow X_B\setminus X_B^{\rm min}$. Therefore, for every point $x$  at which both $T_{B'}$ and $\bar T_B$ are defined, $T_{B'}(x)=\bar T_B(x)$. By the construction of $B'$ there are two possibilities for the point $z$:
\begin{enumerate}
\item[]Case I: If $z$ is not eventually maximal then by the arguments in Construction  we will have $X_{B'}^{\rm max}=X_B^{\rm max}$ and then
$X_{B'}=(X_{B'}\setminus X_{B'}^{\rm max})\cup X_B$.  So we define $\bar T_{B'}=T_B\cup \bar T_B$. 

\item[] Case II: If  $z$ is eventually maximal then $X_{B'}^{\rm max}=X_B^{\rm max}\cup \{w_0\}$.
Note that $w_0$ is an isolated point of $X_{B'}$ and 
$$X_{B'}=(X_{B'}\setminus X_{B'}^{\rm max})\cup X_B^{\rm max}\cup \{w_0\}.$$
We define $\bar T_{B'}=T_{B'}\cup \bar T_B$ on $X_{B'}\setminus \{w_0\}$ and $\bar T_{B'}(w_0)=\bar{T}_B(\alpha(w_0))$. (Note that any point chosen from $X_{B'}$ for defining $\bar{T}_{B'}(w_0)$ will give us a continuous extension of $\bar{T}_{B'}$.)
\end{enumerate}
Now we prove that $\bar T_{B'}$ is continuous. As the Vershik map $T_{B'}$ is a homeomorphism and $X_{B'}\setminus X_{B'}^{\rm max}$ is open in $X_{B'}$, it is enough to prove the continuity of $\bar T_{B'}$ at any $w\in X_{B'}^{\rm max}$. Note that in Case  II, since $w_0$ is an isolated point, $\bar T_{B'}$ is continuous at it. So  we are left to prove the continuity of $\bar T_{B'}$ at any point $w\in X_B^{\rm max}$. For this, it is enough to show that if $(w_n)_{n=1}^\infty$ is a sequence in $X_{B'}\setminus X_B$ converging to some $w\in X_B^{\rm max}$, then $T_{B'}(w_n)\rightarrow \bar T_B(w)$ (see Figure~\ref{fig3}). 
Without loss of generality we may assume that the first $n$ edges of $w_n$ are in $E'$ with source in $V_n$ and with range equal to $v_0^{n+1}$, and for every $k>n+1$ the $k$-th edge of $w_n$ is the $k$-th edge of $x$ (i.e. $x_k$). Let $w_n=w_n^1w_n^2\cdots$ where $w_n^j\in E'_j$ for all $j\in \mathbb N$. We consider $B$ as a subdiagram of $B'$ and so $\alpha(w_n)\in X_{B'}$ for all $n\in\mathbb N$. 

\begin{figure}
\begin{center}
\begin{tikzpicture}[scale=1]

\filldraw[] (8,10) circle [radius=0.1];
 \filldraw[] (7,8) circle [radius=0.1];
\filldraw[] (9,8) circle [radius=0.1];
\node at (9.9,8.7) {$w^1, w^1_1, w_2^1, w_3^1$};
\node at (9.9,6.8) {$w^2, w^2_2, w_3^2$};
\node at (9.6,4.8) {$w^3, w^3_3$};
 \filldraw[] (7,6) circle [radius=0.1];
\filldraw[] (9,6) circle [radius=0.1];
 \filldraw[] (7,4) circle [radius=0.1];
\filldraw[] (9,4) circle [radius=0.1];
 \filldraw[] (7,2) circle [radius=0.1];

\draw[] (7.93,9.98)--(6.98,8.12);

\draw[] (8.085,9.915)--(8.98,8.12);

\draw[] (8.98,8.0)--(7.0,6.0);

\draw[] (6.98,8.0)--(6.98,4.14);

\draw[] (6.98,4.0)--(6.98,2.14);

\draw[] (8.98,8.12)--(9.0,6.14);

\draw[] (8.98,6.0)--(8.98,4.14);

\draw[] (8.98,6.0)--(6.98,4.0);

\draw[] (9.0,4.0)--(7.08,2.1);

\node at (7.40,2.65) {\tiny{$w^4_3$}};
\node at (7.40,4.65) {\tiny{$w_2^3$}};
\node at (7.40,6.65) {\tiny{$w_1^2$}};

\end{tikzpicture}
\end{center}
\caption{Three initial edges $w^j$, $j=1, 2,3$ of the infinite path $w$ as well as some initial edges of three infinite paths $w_i$, $i=1,2,3$ are depicted. In this figure, $w^j_i$ shows the $j$-th edge of the infinite path $w_i$.}\label{fig3}
\end{figure}
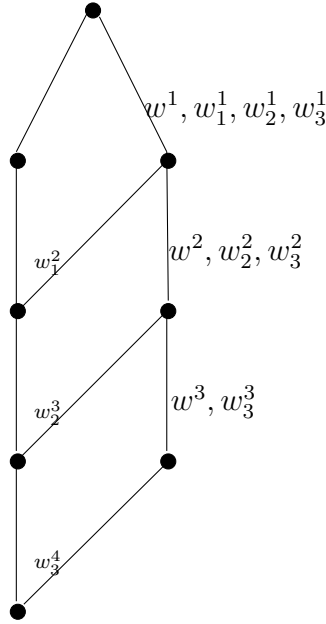

{\bf Claim.} $T_{B'}(w_n)$ and $T_B(\alpha(w_n))$ have the same first $n$ edges for all $n\in\mathbb N$.

To prove the claim let $n\in\mathbb N$. There are two cases here:
either $w_n^{n+1}<x_{n+1}$ or $w_n^{n+1}>x_{n+1}$. 

If $w_n^{n+1}<x_{n+1}$ then by order commutativity of the ordered premorphism $f$, $w_n^{n+1}=h'_{n+1,j}$ which corresponds to some $h_{n+1,j}$ with $j<m_n$ in Construction. Thus $s(h_{n+1,j})=s(w_n^{n+1})$ and 
$$\alpha(w_n)=w_n^1w_n^2\cdots w_n^nh_{n+1,j}y_{n+2}y_{n+3}\cdots.$$
Then $T_{B'}w_n$ and $T_B(\alpha(w_n))$ have the same first edges (see Figure~\ref{fig1}).

\begin{figure}
\begin{center}
\begin{tikzpicture}[scale=1.1]
{\color{blue}

\filldraw (10,4) circle [radius=0.1];
\filldraw (10,6) circle [radius=0.1];
\filldraw (10,2) circle [radius=0.1];
\filldraw (12,6) circle [radius=0.1];
\draw (10,6)--(10,2);
\draw (12,6)--(10,4);

\node at (11.45,5) {\tiny{$w_{n}^{n+1}$}};

\node at (9.7,5) {\tiny{$x_{n+1}$}};

\node at (10.1,3) {\tiny{$w_n^{n+2}=x_{n+2}$}};
}
{\color{red}
\filldraw (5,6) circle [radius=0.1];
\filldraw(6,6) circle [radius=0.1];
\filldraw(7,2) circle [radius=0.1];

\filldraw (7,6) circle [radius=0.1];
 \filldraw (5,4) circle [radius=0.1];
\filldraw (7,4) circle [radius=0.1];

\draw (4.95,6.0)--(4.95,4.14);
\node at (7.35,5) {\tiny{$y_{n+1}$}};
\draw (7.03,6.0)--(7.03,4.14);
\draw (6,6)--(7,4);
\draw (7,4)--(7,2);
\node at (7.35,3) {\tiny{$y_{n+2}$}};

\node at (6.1,5) {\tiny{$h_{n+1,j}$}};
\node at (4.6,5) {\tiny{$z_{n+1}$}};

}
\draw[->, thick] (5.15,4.05) [out=10, in=170] to (9.85,4.1);
\draw[->, thick] (7.15,3.95) [out=-10, in=190] to (9.85,3.88);

\node at (9.6,4.3) {\tiny{1}};
\node at (9.6,3.99) {\tiny{0}};

\end{tikzpicture}
\end{center}
\caption{The case $w_n^{n+1}<x_{n+1}.$}
\label{fig1}
\end{figure}
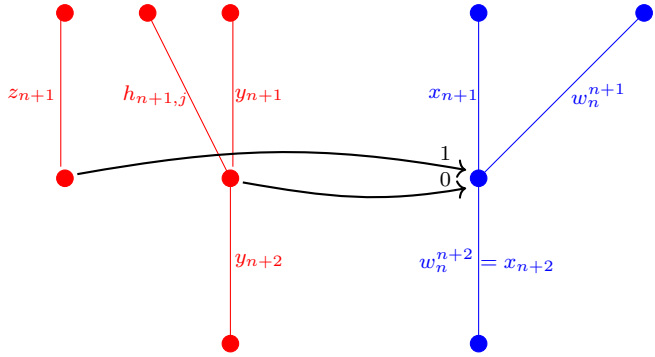
\medskip

 In the second case that $w_n^{n+1}>x_{n+1}$ we have $w_n^{n+1}=g'_{n+1,j}$ which corresponds  to some $g_{n+1,j}$ with $j\leq \ell_n$ in Construction (see Figure~\ref{fig2}). This means that $s(g_{n+1,j})=s(w_n^{n+1})$  (see Figure~\ref{fig2}) and 
 $$\alpha(w_n)=w_n^1w_n^2\cdots w_n^ng_{n+1,j}z_{n+2}z_{n+3}\cdots.$$
 
 \begin{figure}
\begin{center}
\begin{tikzpicture}[scale=1.1]
{\color{blue}

\filldraw (10,4) circle [radius=0.1];
\filldraw (10,6) circle [radius=0.1];
\filldraw (10,2) circle [radius=0.1];
\filldraw (12,6) circle [radius=0.1];
\draw (10,6)--(10,2);
\draw (12,6)--(10,4);

\node at (11.45,5) {\tiny{$w_{n}^{n+1}$}};

\node at (9.7,5) {\tiny{$x_{n+1}$}};

\node at (10.1,3) {\tiny{$w_n^{n+2}=x_{n+2}$}};
}
{\color{red}
\filldraw (5,6) circle [radius=0.1];
\filldraw(6,6) circle [radius=0.1];
\filldraw(7,2) circle [radius=0.1];

\filldraw (7,6) circle [radius=0.1];
 \filldraw (5,4) circle [radius=0.1];
\filldraw (7,4) circle [radius=0.1];

\draw (4.95,6.0)--(4.95,4.14);
\node at (7.35,5) {\tiny{$y_{n+1}$}};
\draw (7.03,6.0)--(7.03,4.14);
\draw (6,6)--(5,4);
\draw (7,4)--(7,2);
\node at (7.35,3) {\tiny{$y_{n+2}$}};

\node at (6.1,5) {\tiny{$g_{n+1,j}$}};
\node at (4.6,5) {\tiny{$z_{n+1}$}};

}
\draw[->, thick] (5.15,4.05) [out=10, in=170] to (9.85,4.1);
\draw[->, thick] (7.15,3.95) [out=-10, in=190] to (9.85,3.88);

\node at (9.6,4.3) {\tiny{1}};
\node at (9.6,3.99) {\tiny{0}};

\end{tikzpicture}
\end{center}
\caption{The case $w_n^{n+1}>x_{n+1}.$}
\label{fig2}
\end{figure}
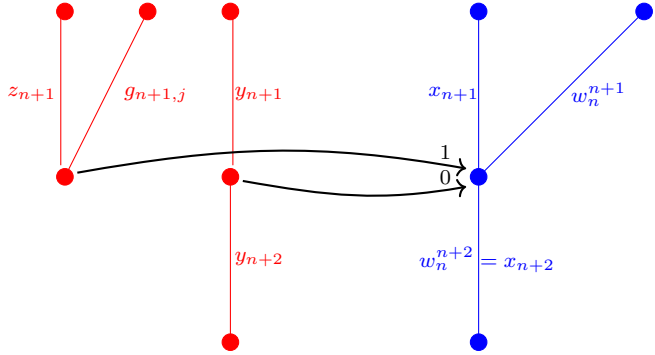

 When $j<\ell_n$, the $(n+1)$-th edges of $T_{B'}(w_n)$ and $T_B(\alpha(w_n))$ are $g'_{n+1,j+1}$ and $g_{n+1,j+1}$, respectively, and the first $n$ edges of $T_{B'}(w_n)$ and $T_B(\alpha(w_n))$ are the same. If $j=\ell_n$ then as $T_{B'}(x)$ is not an infinite max path, by Theorem \ref{base}(4) we see that $z\notin X_B^{\rm max}$ and so there is $t>n+1$ such that $x_t$ is not a max edge (equivalently, $\ell_t>1$) and $x_{n+2},\ldots, x_{t-1}$ are max edges (see Figure~\ref{fig5}).

\begin{figure}
\begin{center}
\begin{tikzpicture}[scale=1]

 \filldraw[] (7,8) circle [radius=0.1];
\filldraw[] (9,8) circle [radius=0.1];
 \filldraw[] (7,6) circle [radius=0.1];
 \filldraw[] (7,4) circle [radius=0.1];
\filldraw[] (9,4) circle [radius=0.1];
 \filldraw[] (7,2) circle [radius=0.1];


\draw[] (8.98,8.0)--(7.0,6.0);

\draw[] (6.98,8.0)--(6.98,4.14);

\draw[] (6.98,3.2)--(6.98,2.14);

\draw[] (9.0,4.0)--(7.08,2.1);

\node at (6.70,6.65) {\tiny{$x_{n+1}$}};
\node at (6.70,4.65) {\tiny{$x_{n+2}$}};
\node at (6.70,2.65) {\tiny{$x_{t}$}};

\node at (6.98,3.7) {\tiny{$\vdots$}};

\node at (8.40,6.65) {\tiny{$w_{n}^{n+1}={\rm max}$}};

\end{tikzpicture}
\end{center}
\caption{}\label{fig5}
\end{figure}

  Then the $t$-th edges of $T_{B'}(w_n)$ and $T_B(\alpha(w_n))$ are $g'_{t,2}$ and $g_{t,2}$ respectively, and $T_{B'}(w_n)$ and $T_B(\alpha(w_{n+1}))$ have the same first $(t-1)$ edges and hence first $n$ edges. This finishes the proof of the claim.
 
 \smallskip
 
 Observe that, by the definition of $\alpha$, the first $n$ edges of $w_n$ and $\alpha(w_n)$ are the same. Therefore, $\alpha(w_n)\rightarrow w$ in $X_B$ since $w_n\rightarrow w$ in $X_{B'}$. Consequently, $T_B(\alpha(w_n))\rightarrow \bar T_B(w)$. On the other hand, the claim implies that 
 $d(T_{B'}(w_n),T_B(\alpha(w_n)))\rightarrow 0$ where $d$ is the canonical metric on $X_{B'}$. Hence, $T_{B'}(w_n)\rightarrow \bar T_B(w)$. This finishes the proof of part (1).
 
 Parts (2) and (3) were covered in  Case I and Case II.
 Part (4) follows from parts (1)-(3). 
 To prove part (5), let $w\in X_{B'}\setminus \{x\}$. Consider the following three cases:
 \begin{itemize}
 \item [(a)] $w\in X_B$. In this case $\alpha(w)=w$ and $$\bar{T}_{B}(\alpha(w))=\bar{T}_{B}(w)=\bar{T}_{B'}(w)=\alpha(\bar{T}_{B'}(w)).$$
 \item [(b)] $w\in X_{B'}\setminus X_B$ and $w\notin X_{B'}^{\rm max}$. Then $\alpha(w)\notin X_{B}^{\rm max}$ (since $w\neq x$). By Proposition \ref{factor1}\eqref{intersect} the desired equation is satisfied. 
 \item [(c)] $w\in X_{B'}\setminus X_B$ and $w\in X_{B'}^{\rm max}$. This case happens only in Case II which means that $w=w_0$ and by definition of $\bar{T}_{B'}(w_0)$ and the fact that $\alpha\upharpoonright_{X_B}={\rm id}$, we have
$$\bar{T}_{B}(\alpha(w_0))=\bar{T}_{B'}(w_0)=\alpha(\bar{T}_{B'}(w_0)).$$
 \end{itemize}
  This finishes the proof.
\end{proof}

\begin{proposition}\label{dc}
Let $B$ be a decisive  ordered Bratteli diagram.  Suppose that $B'$, $f:B\rightarrow B'$, $z\in X_B^{\rm min}$, $y\in X_B^{\rm max},$  and $\alpha=\mathcal V(f):X_{B'}\rightarrow X_B$ are  as in Subsection \ref{construction}. Then $B'$ is decisive if and only if one of the following statements holds:
\begin{enumerate}
\item $z$ is not eventually maximal and $y$ is not eventually minimal. (See Figure~\ref{fig7} as an example.)
\item $z$ is eventually maximal,  $y$ is eventually minimal, and $X_B^{\rm max}$ has empty interior. (See Figure~\ref{fig8} as an example.)
\end{enumerate}
In particular, If $B$ is simple and decisive such that $X_B$ is infinite then $B'$ is decisive.
\end{proposition}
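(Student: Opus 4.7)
The plan is to apply Lemma~\ref{lem81} to $B'$: decisiveness is equivalent to (a) uniform continuity of the Vershik map and its inverse on their domains, together with (b) the condition that $X_{B'}^{\max}$ and $X_{B'}^{\min}$ either both have empty interior, or both have interior consisting of a single isolated point. For (a), the decisiveness of $B$ gives uniform continuity of $T_B^{\pm 1}$; Lemma~\ref{lemmain}(\ref{existence}) produces a continuous extension $\bar{T}_{B'}$ of $T_{B'}$ on the compact space $X_{B'}$, and the analogous argument applied to the reverse-ordered diagram yields a continuous extension of $T_{B'}^{-1}$. Uniform continuity then follows by compactness, so (a) is automatic and the problem reduces to analyzing (b).

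For (b) I would combine Lemma~\ref{lemmain}(\ref{unique}) and~(3) with their dual statements for the min side (obtained by applying those parts to the reverse-ordered diagram). This identifies $X_{B'}^{\max}$ as $X_B^{\max}$ when $z$ is not eventually maximal and as $X_B^{\max}\cup\{w_0\}$ with $w_0$ isolated in $X_{B'}$ otherwise; symmetrically, $X_{B'}^{\min}$ equals $X_B^{\min}$ or $X_B^{\min}\cup\{w_0'\}$ (with $w_0'$ isolated) depending on whether $y$ is eventually minimal. A key topological fact I would record is that, by Theorem~\ref{base}(\ref{isolated}), $X_B$ is closed in $X_{B'}$ and isolated points of $X_B$ remain isolated in $X_{B'}$; consequently, empty interior of $X_B^{\max}$ in $X_B$ transfers to empty interior in $X_{B'}$ (any nonempty open set of $X_{B'}$ contained in $X_B^{\max}$ would lie in $X_B$ and be open there).

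The core of the proof is then a case analysis on the two conditions ``$z$ eventually maximal'' and ``$y$ eventually minimal.'' The ``neither'' case gives $X_{B'}^{\max}=X_B^{\max}$ and $X_{B'}^{\min}=X_B^{\min}$, whose matching interior structure in $X_B$ (from decisiveness of $B$) transfers to $X_{B'}$, yielding condition~(1). The ``both'' case adds a single isolated point to each set; condition~(b) then holds precisely when $X_B^{\max}$ (equivalently, by decisiveness of $B$, also $X_B^{\min}$) has empty interior in $X_B$, giving condition~(2). The ``exactly one'' cases create a mismatch that violates~(b), since one of the sets acquires an isolated interior point while the other does not. For the final ``in particular'' assertion, $B$ simple with $X_B$ infinite forces $X_B$ to be a Cantor set with no isolated points, so $X_B^{\max}$ automatically has empty interior; the main subtle point, which I expect to be the chief obstacle, is ruling out the ``exactly one'' scenario under these hypotheses. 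I anticipate handling this via a telescoping argument: after telescoping along a sequence witnessing simplicity so that the resulting adjacency matrices become strictly positive, every vertex acquires multiple incoming edges, forbidding both eventually-maximal and eventually-minimal behaviours and placing us in the ``neither'' case, which suffices since decisiveness is invariant under telescoping.
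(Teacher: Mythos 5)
Your proposal is correct, but it follows a genuinely different route from the paper. The paper never invokes Lemma~\ref{lem81}: it proves decisiveness of $B'$ by hand, using Lemma~\ref{lemmain}\eqref{existence} to produce a continuous extension and then arguing uniqueness directly (in case~(2) by observing that $X_{B'}\setminus X_B$ is a finite cycle of isolated points and that any competing extension would force an isolated point into $X_B^{\rm min}$), and it proves the forward implication by exhibiting a second extension $S$ when $X_B^{\rm max}$ has non-empty interior. Your reduction to the Downarowicz--Karpel criterion is cleaner as an ``if and only if'': once uniform continuity of $T_{B'}^{\pm 1}$ is settled, the whole statement becomes a bookkeeping of ${\rm int}(X_{B'}^{\rm max})$ and ${\rm int}(X_{B'}^{\rm min})$, and your four-way case analysis (using that isolated points of $X_B$ stay isolated in $X_{B'}$ by Theorem~\ref{base}\eqref{isolated}, while ${\rm int}_{X_{B'}}(A)\subseteq{\rm int}_{X_B}(A)$ for $A\subseteq X_B$) correctly recovers conditions (1) and (2), including the failure in the ``exactly one'' cases, where one interior acquires the isolated point $w_0$ and the other does not match. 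The one place where you are trading on something the paper does not state is the dual of Lemma~\ref{lemmain}\eqref{existence} for $T_{B'}^{-1}$: this requires checking that reversing the order on $B'$ yields exactly the output of the Construction applied to the order-reversed $B$ with the roles of $y$ and $z$ swapped (it does --- the block $r^{-1}(v_0^n)=\{h'_{n,1}<\cdots<h'_{n,m_n}=g'_{n,1}<\cdots<g'_{n,\ell_n}\}$ reverses to the corresponding block for $(B^{\rm op},y,z)$ --- but you should record this verification rather than leave it implicit). Your telescoping argument for the ``in particular'' clause is also sound and in fact supplies a justification the paper omits: $z$ eventually maximal forces $\#r^{-1}(r(z_n))=1$ for all large $n$, which survives telescoping as a statement about composite incoming paths, and is incompatible with simplicity plus $\#X_B=\infty$.
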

\begin{proof}
First observe that $z$ (resp., $y$) is eventually maximal (resp., minimal) if and only if $T_{B'}(x)$ (resp., $x$) is eventually maximal (resp., minimal). So for the backward implication first assume that (1) holds. Then $x$ and $T_{B'}(x)$ are not eventually maximal and eventually minimal, respectively, which means that $X_{B'}^{\rm max}=X_B^{\rm max}$ and  $X_{B'}^{\rm min}=X_B^{\rm min}$. 
Let $\bar{T}_B:X_B\rightarrow X_B$ be the unique homeomorphism extension of $T_B:X_B\setminus X_B^{\rm max}\rightarrow X_B\setminus X_B^{\rm min}$. By Lemma \ref{lemmain}\eqref{existence} there is a continuous map $\bar{T}_{B'}:X_{B'}\rightarrow X_{B'}$ extending both $\bar{T}_B$ and the Vershik map $T_{B'}:X_{B'}\setminus X_{B'}^{\rm max}\rightarrow X_{B'}\setminus X_{B'}^{\rm min}$. Since $$X_{B'}\setminus X_B\subseteq X_{B'}\setminus (X_{B'}^{\rm max}\cup X_{B'}^{\rm min}),$$
it follows that $\bar{T}_{B'}(X_B)=X_B$ and $\bar{T}_{B'}(X_{B'}\setminus X_B)=X_{B'}\setminus X_B$. Hence, $\bar{T}_{B'}$ is a homeomorphism extension of the Vershik map $T_{B'}$. Uniqueness of $\bar{T}_{B'}$ is clear by Lemma \ref{lemmain}\eqref{unique}. Therefore, $B'$ is decisive. 
\medskip

Now assume that (2) holds. Then $x$ and $T_{B'}(x)$ are eventually minimal and eventually maximal, respectively. As $x$ and $T_{B'}(x)$ are cofinal, $x$ is eventually maximal too. It turns out that
$$\exists m\geq 1\ \forall n\geq m\  \ r^{-1}(r(x_n))=\{x_n\}.$$
and the latter means that $X_{B'}\setminus X_B$ is a cycle, i.e., there exist $k\geq 2$ and some  $w_1,\ldots, w_k\in X_{B'}$ so that $w_1\in X_{B'}^{\rm min}$, $w_k\in X_{B'}^{\rm max}$, $w_{i+1}$ is the successor of $w_i$ for $i=1,\ldots, k-1$ and $$X_{B'}\setminus X_B=\{w_1, \ldots, w_k\}.$$
Note that there is $1\leq j<k$ such that $w_j=x$ and $w_{j+1}=T_{B'}(x)$. Moreover, each of the $w_i$'s are isolated points in $X_{B'}$. Therefore, one can define a homeomorphism extension, say $\bar{T}_{B'}$, of 
$T_{B'}:X_{B'}\setminus X_{B'}^{\rm max}\rightarrow X_{B'}\setminus X_{B'}^{\rm min}$ by Lemma \ref{lemmain} and letting $\bar{T}_{B'}(w_k)=w_1$. We claim that this extension is unique. Since otherwise, if $S:X_{B'}\rightarrow X_{B'}$ is another extension of $T_{B'}$, then there exists some $t\in X_B^{\rm min}$ such that $T_{B'}(w_k)=t$ and $T_{B'}(T^{-1}_B(t))=w_1$. By the assumption, $T^{-1}_B(t)\in X_B^{\rm max}$ is not an isolated point in $X_B$ (and hence in $X_{B'}$ by Theorem \ref{base}\eqref{isolated}) while $x$ is an isolated point of  $X_{B'}$, a contradiction. Therefore, $B'$ is decisive. 
\medskip

For the forward implication, assume that $B'$ is decisive and that (1) does not hold. We prove that (2) is satisfied. Let $z$ be eventually maximal. Suppose that $y$ is not eventually minimal. Then $T_{B'}(x)$ is eventually maximal and $x$ is not eventually minimal. So $X_{B'}\setminus X_B$ is a singleton, say $\{w\}$, while $X_{B'}^{\rm min}=X_B^{\rm min}$. Since $B'$ is decisive, $\bar{T}_{B'}(w)\in X_B^{\rm min}$ and it is isolated in $X_B$ (as $w$ is isolated in $X_{B'}$). Since $B$ is decisive, $\bar{T}_B^{-1}(\bar{T}_B(w))\in X_B^{\rm max}$ and is an isolated point of $X_B$. By Theorem \ref{base}\eqref{isolated} this point will be isolated in $X_{B'}$. Hence, $y$ is eventually minimal. Similarly, if $y$ is eventually minimal then $z$ is eventually maximal. Since we had the assumption of not having (1), the conclusion is that $z$ is eventually maximal and $y$ is eventually minimal. So it remains to show that $X_B^{\rm max}$ has empty interior. Suppose that $$X_{B'}=X_B\cup\{w_1,\ldots, w_k\}$$ as above. Recall that each $w_i$ is isolated. If $X_B^{\rm max}$ has non-empty interior, then (by decisiveness) so does $X_B^{\rm min}$. Let $t\in X_B^{\rm min}$ be an isolated point of $X_B$ (and hence $X_{B'}$). Then one can define $S:X_{B'}\rightarrow X_{B'}$ such that $S(w_k)=t$, $S(T_B^{-1}(t))=w_1$ and $S(w)=T_B(w)$   for all other $w\in X_B$ which is clearly another extension of $\bar{T}_{B'}$, a contradiction. Consequently, (2) holds.

\medskip

If $B$ is simple then (1) happens and so $B'$ is decisive. 
\end{proof}

\begin{corollary}
Let $B$ be a properly ordered Bratteli diagram, $z=x_{\rm min}$ and $y=x_{\rm max}$. Let $B'$, $x\in X_{B'}$ and $f:B\rightarrow B'$ be as in Subsection \ref{construction}. Then $X_{B'}$ is decisive if and only if $\#X_B=\infty$.
\end{corollary}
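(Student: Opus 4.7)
The plan is to reduce the corollary to Proposition~\ref{dc}: since $B$ is properly ordered it is decisive, and $X_B^{\max}=\{y\}$, $X_B^{\min}=\{z\}$ are singletons. As a preliminary step I would show, for properly ordered $B$, that ``$z$ eventually maximal'' $\iff$ ``$y$ eventually minimal''. Indeed, if $z_n$ is maximal in $r^{-1}(r(z_n))$ for every $n\geq n_0$, then choosing max edges backward from $r(z_{n_0-1})$ produces an infinite maximal path that agrees with $z$ past level $n_0$; uniqueness of the max path identifies this with $y$, so $y_n=z_n$ is also minimal for $n\geq n_0$. Combined with the identification $X_B^{\max}=\{y\}$, Proposition~\ref{dc} then says $B'$ is decisive iff either $z$ is not eventually maximal, or ($z$ is eventually maximal and $y$ is not isolated in $X_B$). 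So the corollary reduces to showing that these conditions fail precisely when $X_B$ is finite.

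For the direction $|X_B|=\infty \Rightarrow X_{B'}$ decisive, if $z$ is not eventually maximal we are done. Otherwise, say $z$ is eventually maximal from level $n_0$; then $\bar T_B$ applied to $z$ modifies only edges below level $n_0$, so every iterate $\bar T_B^i(z)$ agrees with $z$ past $n_0$, and the $\bar T_B$-orbit $O(z)$ is contained in the finite set of paths agreeing with $z$ past $n_0$, hence a finite cycle containing both $z$ and $y$. By the Herman--Putnam--Skau theorem~\cite{hps92}, $(X_B,\bar T_B)$ is essentially minimal and $O(z)$ is its unique minimal subsystem. If $X_B$ is infinite then $X_B\setminus O(z)\neq\varnothing$, and for any $p$ in the complement the closed invariant set $\overline{\{\bar T_B^n(p):n\in\mathbb Z\}}$ must contain $O(z)$ while being contained in $\overline{X_B\setminus\{y\}}$; thus $y$ is not isolated and condition~(2) of Proposition~\ref{dc} holds.

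For the converse $|X_B|<\infty \Rightarrow X_{B'}$ not decisive, finiteness of $X_B$ forces the discrete topology, so $y$ is isolated and $X_B^{\max}$ has non-empty interior, ruling out condition~(2). To rule out (1), I would show $z$ is eventually maximal: by Lemma~\ref{lem81} applied to the decisive diagram $B$, the isolation of $y$ forces isolation of $z$, so $\{z\}=[z_1\cdots z_m]$ is a cylinder for some $m$. For $n\geq m$ the unique infinite path through $r(z_n)$ is $z$ itself; and the standard ``$\sum\alpha(v)(\beta(v)-1)=0$'' counting argument (for large $n$ the number of finite paths of length $n$ equals $|X_B|$ and each extends uniquely to an infinite path) then gives that $r(z_n)$ is reached by a single finite path. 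Hence $|r^{-1}(r(z_n))|=1$, meaning $z_n$ is simultaneously minimal and maximal, so $z$ is eventually maximal and (1) also fails. Proposition~\ref{dc} concludes $B'$ is not decisive.

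The main obstacle I expect is the counting step in the last paragraph: one has to rule out spurious edges coming from vertices not reachable from $v_0$, which are not forbidden by the formal definition of Bratteli diagrams but do not contribute to $X_B$. This can be cleanly dispatched by a preliminary pruning of $B$ (without loss of generality), after which in the finite case the adjacency matrices eventually become permutation matrices, forcing $|r^{-1}(v)|=1$ for all vertices $v$ at large levels and in particular at $v=r(z_n)$.
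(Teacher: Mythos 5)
Your proof is correct and shares the paper's skeleton --- both reduce everything to Proposition~\ref{dc} and both use properness to pass from ``$z$ eventually maximal'' to ``$y$ eventually minimal'' via the backward choice of maximal edges --- but the two decisive sub-steps are handled differently. For the infinite case, where the paper shows $y$ is non-isolated by a combinatorial argument (isolation of $y$ would force $r^{-1}(r(z_n))=s^{-1}(s(z_n))$ at large levels and hence extra max/min paths, contradicting properness), you instead invoke essential minimality of properly ordered Bratteli--Vershik systems: the orbit of $z$ is a finite cycle through $y$, it is the unique minimal set, and the orbit closure of any point outside it accumulates on $y$. This is a clean dynamical replacement for the paper's diagram-counting and is arguably more transparent. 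For the finite case, the paper disposes of the forward implication in one sentence (``each of the two statements of Proposition~\ref{dc} imply that $X_B$ is infinite''), whereas you supply the content that sentence actually needs: that $\#X_B<\infty$ forces $z$ to be eventually maximal, via the stabilization of the number of finite paths and the resulting $\sum h(u)(\#s^{-1}(u)-1)=0$ identity, which eventually turns the incidence matrices into permutation matrices. Two small remarks: the ``pruning'' you worry about is unnecessary, since the paper's definition requires $r^{-1}\{v\}\neq\varnothing$ for every vertex, so every vertex is already reachable from $v_0$; and your intermediate claim that ``for $n\geq m$ the unique infinite path through $r(z_n)$ is $z$'' does not follow from the isolation of $z$ alone (a path through $r(z_n)$ need not lie in the cylinder $[z_1\cdots z_m]$), but this claim is superfluous --- the permutation-matrix conclusion in your last paragraph already yields $\#r^{-1}(r(z_n))=1$ directly, so the argument stands.
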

\begin{proof}
If $B'$ is decisive, then each of the two  statements of Proposition \ref{dc} imply that $X_B$ is infinite. 

Now suppose that $X_B$ is infinite. We prove that if the statement (1) of Proposition \ref{dc} does not hold, then (2) is satisfied.  To show this, without loss of generality, assume that $z=z_1z_2\cdots$ is eventually maximal. Choose $m\geq 1$ such that $z_n$ is a maximal edge for all $n\geq m$. Let $z'_1z'_2\ldots z'_{m-1}$ be the finite maximal path (from $v_0$ to $s(z_m)$) such  that $w=z'_1z'_2\ldots z'_{m-1}z_mz_{m+1}\ldots\in X_B^{\rm max}$. By properness, $w=y$ and therefore, $y$ is eventually minimal. It remains to show that $y$ is not isolated. But if $y$ is isolated then there exists some $k\geq m$ such that 
 for all $n>k$, $r^{-1}(r(z_n))=s^{-1}(s(z_n))$. As $X_B$ is infinite, there are at least two vertices at each level $n>k$ which means that there are (infinite) min paths and (infinite) max paths other than $z$ and $y$, contradicting properness.
\end{proof}
\medskip

Now we have all  tools in hand to prove Theorem \ref{main} about relation between ordered premorphisms and topological factoring for decisive ordered Bratteli diagrams. 

\begin{proof}[\bf Proof of Theorem \ref{main}.]
$(2)\Rightarrow (1)$ is true by Proposition \ref{factor3}. For the converse suppose that $B$ has at least two infinite min paths. Let $y$ be an infinite max path in $B$ and $z\in X_B^{\rm min}\setminus \{\bar{T}_B(y)\}$ to make a Bratteli diagram $B'$ with an ordered premorphism $f$ between $B$ and $B'$ as in  Construction. By Lemma \ref{lemmain}\eqref{semi}, $C:=B'$ is a semi-decisive ordered Bratteli diagram such that by Theorem \ref{base}\eqref{nofactor}, $\mathcal{V}(f)$ is not a topological factoring from $X_C$ to $X_B$.
\end{proof}

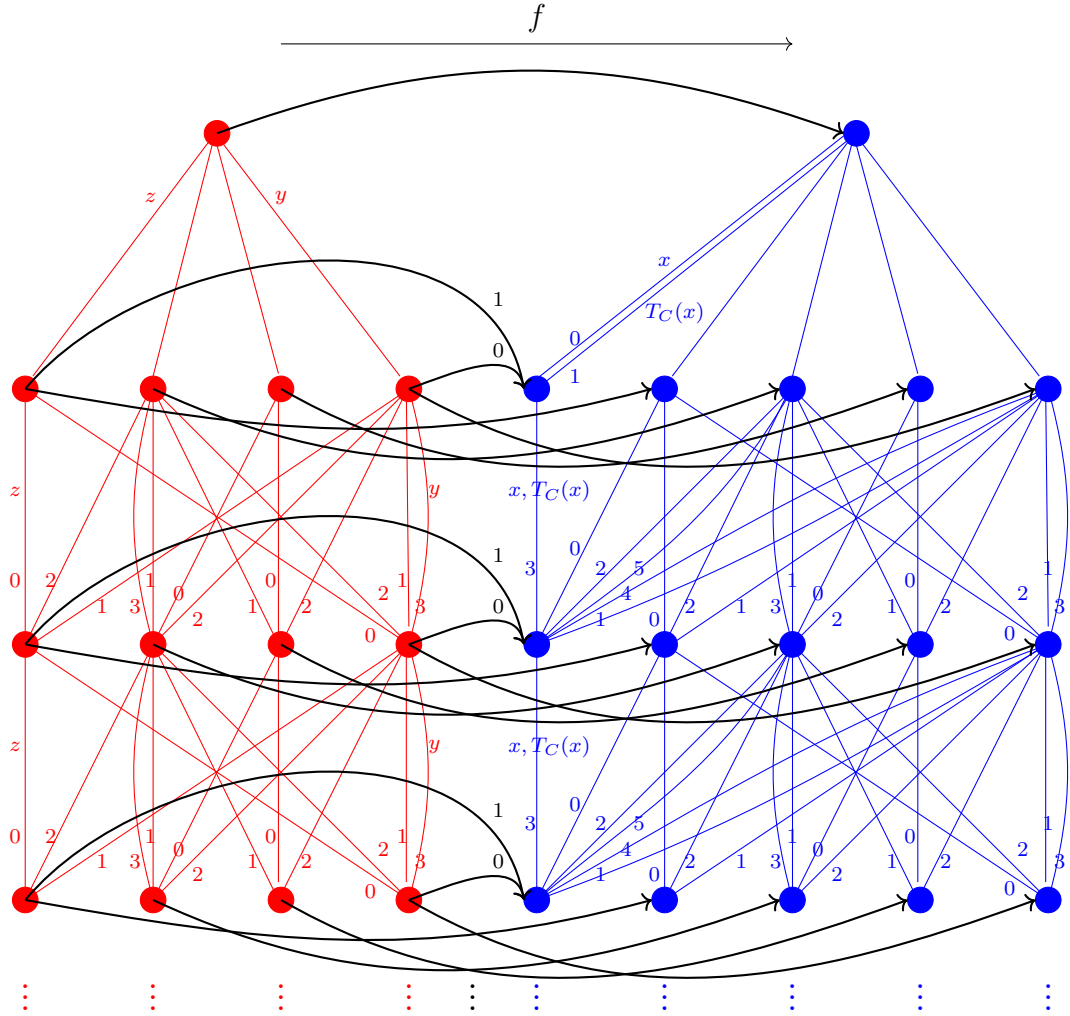
\begin{figure}
\begin{center}
\begin{tikzpicture}[scale=1.7]
{\color{red}
\filldraw[] (2.5,10) circle [radius=0.1];
 \filldraw[] (3,8) circle [radius=0.1];
\filldraw[] (4,8) circle [radius=0.1];
 \filldraw[] (3,6) circle [radius=0.1];
\filldraw[] (4,6) circle [radius=0.1];
 \filldraw[] (3,4) circle [radius=0.1];
\filldraw[] (4,4) circle [radius=0.1];
 \filldraw[] (2,8) circle [radius=0.1];
\filldraw[] (2,6) circle [radius=0.1];
\filldraw[] (2,4) circle [radius=0.1];

\filldraw[] (1,8) circle [radius=0.1];
 \filldraw[] (1,6) circle [radius=0.1];
\filldraw[] (1,4) circle [radius=0.1];


\draw[] (4,6) [out=75, in=-75] to (4,8);

\draw[] (4,4) [out=75, in=-75] to (4,6);

\draw[] (2,6) [out=105, in=-105] to (2,8);

\draw[] (2,4) [out=105, in=-105] to (2,6);

\draw[] (2.48,9.98)--(2.98,8.12);

\draw[] (2.5,9.98)--(3.93,8.10);

\draw[] (3.98,7.98)--(3.0,6.0);

\draw[] (2.98,7.95)--(2.98,4.14);


\draw[] (3.98,8.0)--(4.0,6.14);

\draw[] (3.98,6.0)--(3.98,4.14);

\draw[] (3.98,6.0)--(2.98,4.0);

\draw[] (1.98,8.0)--(1,6);

\draw[] (2.48,9.98)--(2,8.1);

\draw[] (2,8)--(2,4);

\draw[] (2,8)--(2.98,6);

\draw[] (2,6)--(2.98,4);

\draw[] (4,8)--(1.05,6);

\draw[] (2.47,9.98)--(1.06,8.1);
\draw[] (1,8)--(1,4);
\draw[] (3.98,6)--(1.0,4);

\draw[] (1,8)--(4,6);
\draw[] (1,6)--(4,4);
\draw[] (2,6)--(4,4);

\draw[] (2,8)--(4,6);

\draw[] (3,8)--(2,6);
\draw[] (4,8)--(2,6);
\draw[] (4,6)--(2,4);

\draw[] (3,6)--(2,4);

\draw[] (2,6)--(1,4);

%
%
\node at (0.92,6.5) {\tiny{$0$}};
\node at (0.92,4.5) {\tiny{$0$}};
\node at (2.92,6.5) {\tiny{$0$}};
\node at (2.92,4.5) {\tiny{$0$}};
\node at (2.2,6.4) {\tiny{$0$}};
\node at (2.2,4.4) {\tiny{$0$}};
\node at (3.7,4.07) {\tiny{$0$}};
\node at (3.7,6.07) {\tiny{$0$}};
\node at (2.78,6.3) {\tiny{$1$}};
\node at (2.78,4.3) {\tiny{$1$}};
\node at (1.6,6.3) {\tiny{$1$}};
\node at (1.6,4.3) {\tiny{$1$}};

\node at (2.35,6.2) {\tiny{$2$}};

\node at (2.35,4.2) {\tiny{$2$}};

\node at (3.8,4.4) {\tiny{$2$}};

\node at (3.8,6.4) {\tiny{$2$}};

\node at (1.2,6.5) {\tiny{$2$}};
\node at (1.2,4.5) {\tiny{$2$}};
\node at (3.2,6.3) {\tiny{$2$}};

\node at (1.98,6.5) {\tiny{$1$}};
\node at (1.86,6.3) {\tiny{$3$}};

\node at (1.98,4.5) {\tiny{$1$}};
\node at (1.86,4.3) {\tiny{$3$}};

\node at (3.2,4.3) {\tiny{$2$}};

\node at (4.09,6.3) {\tiny{$3$}};
\node at (3.95,6.5) {\tiny{$1$}};

\node at (4.09,4.3) {\tiny{$3$}};
\node at (3.95,4.5) {\tiny{$1$}};

}
{\color{blue}

\filldraw[] (7.5,10) circle [radius=0.1];
 \filldraw[] (8,8) circle [radius=0.1];
\filldraw[] (9,8) circle [radius=0.1];
 \filldraw[] (8,6) circle [radius=0.1];
\filldraw[] (9,6) circle [radius=0.1];
 \filldraw[] (8,4) circle [radius=0.1];
\filldraw[] (9,4) circle [radius=0.1];
\filldraw[] (7,8.0) circle [radius=0.1];
\filldraw[] (7,6) circle [radius=0.1];
\filldraw[] (7,4) circle [radius=0.1];

\filldraw[] (6,8) circle [radius=0.1];
 \filldraw[] (6,6) circle [radius=0.1];
\filldraw[] (6,4) circle [radius=0.1];

\filldraw[] (5,8) circle [radius=0.1];
\filldraw[] (5,6) circle [radius=0.1];
\filldraw[] (5,4) circle [radius=0.1];

 
 \draw[] (9,6) [out=75, in=-75] to (9,8);

\draw[] (9,4) [out=75, in=-75] to (9,6);

\draw[] (7,6) [out=105, in=-105] to (7,8);

\draw[] (7,4) [out=105, in=-105] to (7,6);

\draw[] (5,6) [out=30, in=-160] to (9,8);
\draw[] (5,6) [out=20, in=-150] to (9,8);

\draw[] (5,4) [out=30, in=-160] to (9,6);
\draw[] (5,4) [out=20, in=-150] to (9,6);
 
\draw[] (7.48,9.98)--(7.98,8.12);

\draw[] (7.5,9.98)--(8.93,8.10);

\draw[] (8.98,7.98)--(8.0,6.0);

\draw[] (7.98,7.95)--(7.98,4.14);

\draw[] (8.98,8.0)--(9.0,6.14);

\draw[] (8.98,6.0)--(8.98,4.14);

\draw[] (8.98,6.0)--(7.98,4.0);

\draw[] (6.98,8.0)--(6,6);

\draw[] (7.48,9.98)--(7,8.1);

\draw[] (7,8)--(7,4);

\draw[] (7,8)--(7.98,6);

\draw[] (7,6)--(7.98,4);

\draw[] (9,8)--(6.05,6);

\draw[] (7.47,9.98)--(6.06,8.1);
\draw[] (6,8)--(6,4);
\draw[] (8.98,6)--(6.0,4);

\draw[] (6,8)--(9,6);
\draw[] (6,6)--(9,4);
\draw[] (7,6)--(9,4);

\draw[] (7,8)--(9,6);

\draw[] (8,8)--(7,6);
\draw[] (9,8)--(7,6);
\draw[] (9,6)--(7,4);

\draw[] (8,6)--(7,4);

\draw[] (7,6)--(6,4);

\draw[] (7.5,10)--(5,8);
\draw[] (7.4,9.98)--(4.9,8);

\draw[] (5,8)--(5,4);

\draw[] (7,8)--(5,6);
\draw[] (7,6)--(5,4);

\draw[] (6,8)--(5,6);
\draw[] (6,6)--(5,4);

\draw[] (7,8)[out=240, in=35] to (5,6);
\draw[] (7,6)[out=240, in=40] to (5,4);

%

\node at (5.92,6.2) {\tiny{$0$}};
\node at (5.92,4.2) {\tiny{$0$}};
\node at (7.92,6.5) {\tiny{$0$}};
\node at (7.92,4.5) {\tiny{$0$}};
\node at (7.2,6.4) {\tiny{$0$}};
\node at (7.2,4.4) {\tiny{$0$}};
\node at (8.7,4.09) {\tiny{$0$}};
\node at (8.7,6.09) {\tiny{$0$}};
\node at (7.78,6.3) {\tiny{$1$}};
\node at (7.78,4.3) {\tiny{$1$}};

\node at (6.6,6.3) {\tiny{$1$}};

\node at (6.6,4.3) {\tiny{$1$}};

\node at (7.35,6.2) {\tiny{$2$}};

\node at (7.35,4.2) {\tiny{$2$}};
\node at (8.8,4.4) {\tiny{$2$}};
\node at (8.8,6.4) {\tiny{$2$}};
\node at (6.2,6.3) {\tiny{$2$}};
\node at (6.2,4.3) {\tiny{$2$}};
\node at (8.2,6.3) {\tiny{$2$}};

\node at (7,6.5) {\tiny{$1$}};
\node at (6.87,6.3) {\tiny{$3$}};

\node at (7,4.5) {\tiny{$1$}};
\node at (6.87,4.3) {\tiny{$3$}};

\node at (8.2,4.3) {\tiny{$2$}};
\node at (9.09,4.3) {\tiny{$3$}};
\node at (9.09,6.3) {\tiny{$3$}};

\node at (9,4.6) {\tiny{$1$}};
\node at (9,6.6) {\tiny{$1$}};

\node at (5.3,6.75) {\tiny{$0$}};
\node at (5.3,4.75) {\tiny{$0$}};

\node at (5.5,6.6) {\tiny{$2$}};
\node at (5.5,4.6) {\tiny{$2$}};

\node at (4.95,6.6) {\tiny{$3$}};
\node at (4.95,4.6) {\tiny{$3$}};

\node at (5.5,6.199) {\tiny{$1$}};

\node at (5.5,4.199) {\tiny{$1$}};

\node at (5.8,6.6) {\tiny{$5$}};
\node at (5.8,4.6) {\tiny{$5$}};

\node at (5.7,6.4) {\tiny{$4$}};
\node at (5.7,4.4) {\tiny{$4$}};

\node at (6,9) {\tiny{$x$}};
\node at (5.3,8.4) {\tiny{$0$}};
\node at (5.3,8.1) {\tiny{$1$}};

\node at (6.08,8.6) {\tiny{$T_C(x)$}};
\node at (5.1,7.2) {\tiny{$x,T_C(x)$}};
\node at (5.1,5.2) {\tiny{$x, T_C(x)$}};

}

\draw[->, thick] (2.5,10) [out=20, in=160] to (7.4,10);
\draw[->, thick] (1,8) [out=350, in=195] to (5.9,8);
\draw[->, thick] (1,6) [out=350, in=195] to (5.9,6);
\draw[->, thick] (1,4) [out=350, in=195] to (5.9,4);
\draw[->, thick] (2,8) [out=335, in=200] to (6.9,8);
\draw[->, thick] (2,6) [out=335, in=200] to (6.9,6);
\draw[->, thick] (2,4) [out=335, in=200] to (6.9,4);
\draw[->, thick] (3,8) [out=330, in=200] to (7.9,8);
\draw[->, thick] (3,6) [out=330, in=200] to (7.9,6);
\draw[->, thick] (3,4) [out=330, in=200] to (7.9,4);
\draw[->, thick] (4,8) [out=330, in=200] to (8.9,8);
\draw[->, thick] (4,6) [out=330, in=200] to (8.9,6);
\draw[->, thick] (4,4) [out=330, in=200] to (8.9,4);
\draw[->, thick] (1,8) [out=50, in=100] to (4.9,8);
\draw[->, thick] (4,8) [out=20, in=100] to (4.9,8);
\draw[->, thick] (1,6) [out=50, in=100] to (4.9,6);
\draw[->, thick] (4,6) [out=20, in=100] to (4.9,6);
\draw[->, thick] (1,4) [out=50, in=100] to (4.9,4);
\draw[->, thick] (4,4) [out=20, in=100] to (4.9,4);

\node at (4.7,8.3) {\tiny{$0$}};
\node at (4.7,6.3) {\tiny{$0$}};
\node at (4.7,4.3) {\tiny{$0$}};

\node at (4.7,8.7) {\tiny{$1$}};
\node at (4.7,6.7) {\tiny{$1$}};
\node at (4.7,4.7) {\tiny{$1$}};

{\color{red}
\node at (0.92,7.2) {\tiny{$z$}};
\node at (0.92,5.2) {\tiny{$z$}};
\node at (1.98,9.5) {\tiny{$z$}};
\node at (4.2,7.2) {\tiny{$y$}};
\node at (4.2,5.2) {\tiny{$y$}};
\node at (3,9.5) {\tiny{$y$}};

}
{\color{red}
\node at (1,3.3) {\large{$\vdots$}};
\node at (2,3.3) {\large{$\vdots$}};
\node at (3,3.3) {\large{$\vdots$}};
\node at (4,3.3) {\large{$\vdots$}};
}
{\color{blue}
\node at (5,3.3) {\large{$\vdots$}};
\node at (6,3.3) {\large{$\vdots$}};
\node at (7,3.3) {\large{$\vdots$}};
\node at (8,3.3) {\large{$\vdots$}};
\node at (9,3.3) {\large{$\vdots$}};}
\node at (4.5,3.3) {\large{$\vdots$}};

\node at (5,10.9) {$f$};
\draw[->] (3,10.7) to (7,10.7);
\end{tikzpicture}
\end{center}
\caption{ The left diagram satisfies condition (1) of Proposition \ref{dc} and $\alpha(x)=y, \alpha(T_C(x))=z\neq T_B(y).$}\label{fig7}
\end{figure}
\begin{figure}
\begin{center}
\begin{tikzpicture}[scale=1]
{\color{red}
\filldraw[] (5,10) circle [radius=0.1];
 \filldraw[] (5,8) circle [radius=0.1];
\filldraw[] (7,8) circle [radius=0.1];
 \filldraw[] (5,6) circle [radius=0.1];
\filldraw[] (7,6) circle [radius=0.1];
 \filldraw[] (5,4) circle [radius=0.1];
\filldraw[] (7,4) circle [radius=0.1];
\filldraw[] (3,8.0) circle [radius=0.1];
\filldraw[] (3,6) circle [radius=0.1];
\filldraw[] (3,4) circle [radius=0.1];

\draw[] (4.98,9.98)--(4.98,8.12);

\draw[] (5.0,9.98)--(6.93,8.10);

\draw[] (6.98,7.98)--(5.0,6.0);

\draw[] (4.98,7.95)--(4.98,4.14);

\draw[] (6.98,8.0)--(7.0,6.14);

\draw[] (6.98,6.0)--(6.98,4.14);

\draw[] (6.98,6.0)--(4.98,4.0);

\draw[] (4.98,9.98)--(3,8.1);

\draw[] (3,8)--(3,4);

\draw[] (3,8)--(4.98,6);

\draw[] (3,6)--(4.98,4);


\node at (5.40,4.65) {\tiny{$2$}};
\node at (5.40,6.65) {\tiny{$2$}};

\node at (4.9,4.65) {\tiny{$1$}};
\node at (4.9,6.65) {\tiny{$1$}};

\node at (4.5,4.65) {\tiny{$0$}};
\node at (4.5,6.65) {\tiny{$0$}};

\node at (2.9,4.65) {\tiny{$0$}};
\node at (2.9,4.95) {\tiny{$z$}};

\node at (2.9,6.65) {\tiny{$0$}};
\node at (2.9,6.95) {\tiny{$z$}};
\node at (3.7,8.95) {\tiny{$z$}};
\node at (6.3,8.95) {\tiny{$y$}};

\node at (6.9,4.65) {\tiny{$0$}};
\node at (6.9,6.95) {\tiny{$y$}};
\node at (6.9,6.65) {\tiny{$0$}};
\node at (6.9,4.95) {\tiny{$y$}};

\node at (3,3.3) {\large{$\vdots$}};
\node at (7,3.3) {\large{$\vdots$}};
\node at (5,3.3) {\large{$\vdots$}};
}

{\color{blue}
\filldraw[] (9.5,8) circle [radius=0.1];
 \filldraw[] (9.5,6) circle [radius=0.1];
\filldraw[] (9.5,4) circle [radius=0.1];

\filldraw[] (13,10) circle [radius=0.1];
 \filldraw[] (13,8) circle [radius=0.1];
\filldraw[] (15,8) circle [radius=0.1];
 \filldraw[] (13,6) circle [radius=0.1];
\filldraw[] (15,6) circle [radius=0.1];
 \filldraw[] (13,4) circle [radius=0.1];
\filldraw[] (15,4) circle [radius=0.1];
\filldraw[] (11,8.0) circle [radius=0.1];
\filldraw[] (11,6) circle [radius=0.1];
\filldraw[] (11,4) circle [radius=0.1];


\draw[] (9.5,8)--(9.5,4);
\draw[] (12.98,9.99)--(9.5,8);
\draw[] (12.97,10.08)--(9.49,8.1);

\draw[] (12.98,9.98)--(12.98,8.12);

\draw[] (13.0,9.98)--(14.93,8.10);

\draw[] (14.98,7.98)--(13.0,6.0);

\draw[] (12.98,7.95)--(12.98,4.14);

\draw[] (14.98,8.0)--(15.0,6.14);

\draw[] (14.98,6.0)--(14.98,4.14);

\draw[] (14.98,6.0)--(12.98,4.0);

\draw[] (12.98,9.98)--(11,8.1);

\draw[] (11,8)--(11,4);

\draw[] (11,8)--(12.98,6);

\draw[] (11,6)--(12.98,4);


\node at (13.40,4.65) {\tiny{$2$}};
\node at (13.40,6.65) {\tiny{$2$}};

\node at (12.9,4.65) {\tiny{$1$}};
\node at (12.9,6.65) {\tiny{$1$}};

\node at (12.5,4.65) {\tiny{$0$}};
\node at (12.5,6.65) {\tiny{$0$}};

\node at (10.9,4.65) {\tiny{$0$}};
\node at (10.9,6.65) {\tiny{$0$}};

\node at (14.9,4.65) {\tiny{$0$}};
\node at (14.9,6.65) {\tiny{$0$}};

\node at (11,3.3) {\large{$\vdots$}};
\node at (15,3.3) {\large{$\vdots$}};
\node at (9.5,3.3) {\large{$\vdots$}};
\node at (13,3.3) {\large{$\vdots$}};
}
\draw[->, thick] (5,10) [out=10, in=170] to (13,10);
\draw[->, thick] (5.15,4.05) [out=10, in=170] to (12.9,4);
\draw[->, thick] (7.15,3.95) [out=-10, in=190] to (9.45,3.88);
\draw[->, thick] (5.15,6.05) [out=10, in=170] to (12.9,6);
\draw[->, thick] (5.15,8.05) [out=10, in=170] to (12.9,8);
\draw[->, thick] (3.15,8.05) [out=10, in=170] to (9.45,8);
\draw[->, thick] (3.15,8.05) [out=350, in=200] to (10.9,8);
\draw[->, thick] (3.15,4.05) [out=350, in=200] to (10.9,4);
\draw[->, thick] (3.15,6.05) [out=350, in=200] to (10.9,6);
\draw[->, thick] (7.15,5.95) [out=-10, in=190] to (9.45,5.88);
\draw[->, thick] (7.15,7.95) [out=-10, in=190] to (9.45,7.88);
\draw[->, thick] (3.15,6.05) [out=10, in=170] to (9.45,6);
\draw[->, thick] (3.15,4.05) [out=10, in=170] to (9.45,4);
\draw[->, thick] (7.1,3.95) [out=30, in=150] to (14.95,4);
\draw[->, thick] (7.1,5.95) [out=30, in=150] to (14.95,6);
\draw[->, thick] (7.1,7.95) [out=30, in=150] to (14.95,8);

\node at (9.2,4.16) {\tiny{$1$}};
\node at (9.1,3.94) {\tiny{$0$}};

\node at (9.2,6.16) {\tiny{$1$}};
\node at (9.1,5.94) {\tiny{$0$}};

\node at (9.2,8.16) {\tiny{$1$}};
\node at (9.1,7.94) {\tiny{$0$}};

{\color{blue} 
\node at (10.41,8.78) {\tiny{$x$}};
\node at (10.9,8.55) {\tiny{$T_C(x)$}};
}

\node at (8.5,10.9) {$f$};
\draw[->] (6,10.7) to (11,10.7);

\end{tikzpicture}
\end{center}
\caption{ The left diagram satisfies condition (2) of Proposition \ref{dc} and $\alpha(x)=y, \alpha(T_C(x))=z\neq T_B(y)=y.$}\label{fig8}
\end{figure}

\begin{corollary}\label{proper}
Let $B$ be a decisive simple ordered Bratteli diagram  such that $(X_B, T_B)$ is non-trivial. The following statements  are equivalent:
\begin{enumerate}
\item  for every decisive orederd Bratteli diagram $C$ and every ordered premorphism $f:B\rightarrow C$, the induced map $\mathcal V(f): X_C\rightarrow X_B$ is a topological factoring.
\item $B$ is proper.
\end{enumerate}
\end{corollary}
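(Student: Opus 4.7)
The plan is to derive the corollary from Theorem~\ref{main} together with the decisiveness criterion of Proposition~\ref{dc}. The key preliminary observation is that for a decisive diagram $B$ the continuous extension $\bar{T}_B:X_B\to X_B$ is a homeomorphism, and since the Vershik map is already a bijection $X_B\setminus X_B^{\rm max}\to X_B\setminus X_B^{\rm min}$, the homeomorphism $\bar{T}_B$ restricts to a bijection between $X_B^{\rm max}$ and $X_B^{\rm min}$. In particular $\#X_B^{\rm max}=\#X_B^{\rm min}$, so in the decisive setting, $B$ being proper is equivalent to $X_B^{\rm min}$ being a singleton.

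For $(2)\Rightarrow(1)$, I would simply invoke Theorem~\ref{main}. Indeed, $B$ proper gives the unique infinite min path required in Theorem~\ref{main}(2), every decisive diagram is semi-decisive and its unique extension is a natural extension (by the observation above), and the conclusion of Theorem~\ref{main}(1) is phrased for all semi-decisive $C$, so a fortiori it covers all decisive $C$.

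For $(1)\Rightarrow(2)$ I argue contrapositively via the construction of Section~\ref{sec8}. Suppose $B$ is not proper; by the preliminary observation, $\#X_B^{\rm min}\geq 2$. Pick any $y\in X_B^{\rm max}$ and choose $z\in X_B^{\rm min}$ with $z\neq \bar{T}_B(y)$, which is possible since at least two min paths exist. Apply the construction of Subsection~\ref{construction} to obtain a diagram $B'$ and an ordered premorphism $f:B\to B'$. Since $B$ is simple and decisive and $X_B$ is infinite (because $(X_B,T_B)$ is non-trivial), the last sentence of Proposition~\ref{dc} ensures that $B'$ is itself decisive. By Theorem~\ref{base}(1), the induced map $\alpha=\mathcal V(f)$ satisfies $\alpha(x)=y$ and $\alpha(T_{B'}x)=z$, hence
$$\alpha(T_{B'}x)=z\neq \bar{T}_B(y)=\bar{T}_B(\alpha(x)),$$
so $\alpha$ is not a topological factoring. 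This contradicts~(1), and therefore $B$ must be proper.

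The main (already discharged) obstacle in this plan is to guarantee that the counterexample diagram $B'$ produced in the contrapositive is decisive rather than merely semi-decisive; otherwise Theorem~\ref{main} alone would suffice only for the weaker statement involving semi-decisive targets. That obstruction is precisely resolved by the final sentence of Proposition~\ref{dc}, whose hypotheses (simplicity of $B$ and infinitude of $X_B$) are exactly the standing hypotheses of the corollary. Once this is secured, the remainder of the argument reduces to the cardinality identity $\#X_B^{\rm max}=\#X_B^{\rm min}$ coming from $\bar{T}_B$ being a homeomorphism together with a direct application of Theorem~\ref{main}.
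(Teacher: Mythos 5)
Your proposal is correct and follows essentially the same route as the paper: $(2)\Rightarrow(1)$ by Theorem~\ref{main}, and $(1)\Rightarrow(2)$ by applying the construction of Subsection~\ref{construction} with $z\neq\bar{T}_B(y)$ and invoking the final sentence of Proposition~\ref{dc} (simplicity plus infinitude of $X_B$) to guarantee the counterexample diagram $B'$ is decisive. Your preliminary observation that $\bar{T}_B$ restricts to a bijection $X_B^{\rm max}\to X_B^{\rm min}$, so that non-properness yields at least two min paths, is a slightly cleaner packaging of what the paper does by listing two max paths and their images.
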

\begin{proof}
$(2)\Rightarrow (1)$ is a  direct consequence of Theorem \ref{main}. Conversely, Suppose that $B$ is not proper. Then there exist at least two infinite max paths, say $p, q$, and two infinite min paths, say $p'=\bar{T}_B(p), q'=\bar{T}_B(q)$ on it. 
Let $y:=p'$ and $z:=q$ to make diagram $B'$ and ordered premorphism $f: B\rightarrow B'$ as in  Construction. Simplicity of $B$ and then Proposition~\ref{dc} imply that   $B'$ is decisive. But by the choices of $z$ and $y$, $\mathcal V(f)$ is not topological factoring from $B'$ onto $B$ which contradicts $(1)$.
\end{proof}

\begin{remark}
Theorem \ref{main} shows that if $B$ and $C$ are decisive Bratteli diagrams and $f:B\rightarrow C$ is an ordered premorphism, then the induced map $\mathcal V(f): X_C\rightarrow X_B$ is not necessarily a topological factoring. In fact, for every decisive non-proper ordered Bratteli diagram $B$, using the method described in Subsection \ref{construction} and Proposition \ref{dc}, one can construct a semi-decisive Bratteli diagram  $C$ with ordered premorphism $f:B\rightarrow C$ without having factoring between the two Vershik systems. 
\end{remark}
\section{from topological factoring to ordered premorphisms}
 We are now going to   model  topological factorings $\alpha:(X,T)\rightarrow (Y,S)$ between  zero dimensional systems, by   sequences of ordered premorphisms. Let us recall from \cite{aeg21} that when the two systems are minimal and so both have realizations by properly ordered Bratteli diagrams (with unique maximal paths),  we fix two points $x_0\in X$ and $y_0\in Y$ that $\alpha(x_0)=y_0$ and then we construct ordered Bratteli diagrams $B$ and $C$ for $(Y,S, y_0)$ and $(X,T, x_0)$ respectively, so that the unique maximal path of $B$ is $y_0$ and the unique maximal path of $C$ is $x_0$. The two points are in fact, the intersections of the tops of the K-R systems (see Definition \ref{towers}) associated to the Bratteli diagrams respectively. Then one can  construct an ordered premorphism matching  the map $\alpha$.

When the two systems are not necessarily minimal,  by the nice results of \cite{Tak2},  it is still possible to realize the two systems by Vershik maps on ordered Bratteli diagrams $B$ and $C$, respectively. However, we need to have some specific realizations $B$ and $C$ so that the modelling of $\alpha$ by $\mathcal V(f)$ for some ordered premorphism $f:B\rightarrow C$ will guarantee that 
$$\mathcal V(f)(X_C^{\rm max})\subseteq X_B^{\rm max}.$$
 Consequently, in terms of K-R systems, we need to have the intersections of the top of the K-R system associated to $C$ to be mapped by $\alpha$ into the intersection of the top of the K-R systems associated to $B$.  In other words, to model $\alpha$ by an ordered premorphism, we need to consider  the two systems as triples:
 $ (X,T, X_0)$ and $(Y,S,Y_0)$ that  
  $X_0$ and $Y_0$ are closed sets associated to $X_C^{\rm max}$ and $X_B^{\rm max}$ respectively with $ \alpha(X_0)\subseteq Y_0$. In this regard, the following arguments are   needed. 

\begin{definition}\label{towers}
Let $(X,T)$ be a zero dimensional dynamical system and $W\subseteq X$ be a closed set. A {\it Kakutani-Rokhlin} (K-R) partition for $(X,T,W)$ is a partition $$\{Z(k,j):\ 1\leq k\leq K, \ 1\leq j\leq J(k)\}$$ of clopen sets for $X$ such that 
\begin{enumerate}
\item $T(Z(k,j))=Z(k,j+1)$ for all $1\leq k\leq K$ and $1\leq j< J(k)$,
\item $T(\cup_{k=1}^K Z(k, J(k)))=\cup_{k=1}^K Z(k,1)$,
\item $W\subseteq \cup_{k=1}^K Z(k, J(k))$.
\end{enumerate}
The set $\cup_{k=1}^K Z(k,J(k))$ is called the {\it top} of the partition and $\cup_{k=1}^K Z(k,1)$ is its {\it base}. A {\it system } of K-R partitions $(\mathcal P_n)_{n=0}^\infty$ for $(X,T,W)$ is a sequence of K-R partitions in which for every $n\geq 0$, $\mathcal P_{n+1}$ is a refinement of $\mathcal P_n$, the top of $\mathcal P_{n+1}$ is contained in the top of $\mathcal P_n$, and $\cup_{n=0}^\infty\mathcal P_n$ is a base for the topology of $X$.
\end{definition}

\begin{definition}[\cite{Tak3}]\label{basic}
Let $(X,T)$ be a zero dimensional dynamical system. We say that a closed subset $W\subseteq X$ is a {\it quasi-section} set if every clopen neighberhood $U$ of $W$ is a complete $T$-section in the sense of \cite{med}, i.e.,  $U$ meets every $T$-orbit of $X$ at least once, equivalently, $\cup_{n\in\mathbb Z} T^n(U)=X$. 
\end{definition}

\begin{remark}
If $W$ is a quasi-section for $(X,T)$ and $U$ a clopen neighbourhood of $W$, then every point $x\in U$ is recurrent to $U$, i.e., there is $n\in\mathbb N$ such that $T^n(x)\in U$. In fact, since $U$ is a $T$-section we have 
$X=\cup_{n\in\mathbb Z} T^n(U)$ and so $X=\cup_{-N}^N T^n(U)$ for some $N\geq 1$. Then $$X=T^{-(N+1)}(X)=\bigcup_{n=-2N-1}^{-1} T^n(U).$$
In \cite{med} the notion of {\it basic set} is defined which is a quasi-section $W$ with the extra property that $W$ meets every $T$-orbit of $X$ at most once.  For more properties of basic sets, see \cite{Tak4}. 
\end{remark}

The following lemma together with  Propositions~\ref{perfect} and Corollary ~\ref{equiv2} may be considered as  an alternative proof for \cite[Theorem 1.1]{Tak3}. The proofs of Lemma ~\ref{finer} and Proposition ~\ref{perfect} have similar arguments as in the proofs of  \cite[Lemma 2.2]{Poon} and \cite[Theorem 1.1]{Tak3} but the proof of Corollary ~\ref{equiv2} uses  ordered premorphism arguments.

\begin{lemma}\label{finer}
Let $(X,T)$ be a zero dimensional dynamical system and $A\subseteq X$ be a non-empty clopen set which is a complete $T$-section. Let $\mathcal P$ be an arbitarry partition of $X$ into clopen sets. Then there is a K-R partition $\mathcal Q$ for $(X,T)$ such that the top of $\mathcal Q$ is $A$ and $\mathcal Q$ refines $\mathcal P$.
\end{lemma}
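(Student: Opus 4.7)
The plan is to take $B := T(A)$ as the base of the K-R partition, let each tower climb until it first returns to $A$, and then refine each base by the $\mathcal{P}$-itinerary.

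First, I would define the height function $J : B \to \mathbb{N}$ by
$$J(y) = \min\{n \geq 1 : T^{n-1}(y) \in A\}.$$
Clopenness of $A$ and compactness of $X$ ensure that finitely many translates $T^{n_i}(A)$ cover $X$, so the forward orbit of every point visits $A$ within uniformly bounded time; in particular $J$ is well defined and bounded on $B$. Writing $U_n = \{y \in B : T^j(y) \notin A \text{ for all } 0 \leq j < n\}$, one checks $\{J > n\} = U_n$; each $U_n$ is clopen (a finite intersection of $B$ with sets of the form $T^{-j}(X \setminus A)$) and the chain is eventually empty, so the level sets $\{J = n\} = U_{n-1} \setminus U_n$ are clopen.

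Next, listing $\mathcal{P} = \{P_1, \ldots, P_m\}$, for each $n \geq 1$ and itinerary $\iota = (i_0, \ldots, i_{n-1}) \in \{1, \ldots, m\}^n$ I would set
$$Z_{n, \iota} = \{J = n\} \cap \bigcap_{j=0}^{n-1} T^{-j}(P_{i_j}),$$
a clopen subset of $B$. The nonempty $Z_{n,\iota}$ partition $B$. Enumerating them as $Z(k, 1)$ for $k = 1, \ldots, K$ with height $J(k) = n$ and setting $Z(k, j) := T^{j-1}(Z(k, 1))$ for $1 \leq j \leq J(k)$ produces the candidate partition $\mathcal{Q}$. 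By construction each $Z(k, j)$ lies inside the corresponding $P_{i_{j-1}}$, so $\mathcal{Q}$ refines $\mathcal{P}$ automatically.

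It remains to verify that $\mathcal{Q}$ partitions $X$ with top equal to $A$. Given $x \in X$, set $m = \min\{j \geq 0 : T^{-j}(x) \in B\}$ (finite since $B$ is also a clopen complete section). Minimality of $m$ forces $T^{-j}(x) \notin A$ for $1 \leq j \leq m$ (otherwise $T^{-(j-1)}(x) \in T(A) = B$ with $j - 1 < m$, contradicting minimality), so $y := T^{-m}(x)$ satisfies $J(y) > m$; this places $x$ at level $m + 1$ of the unique tower through $y$. Applied to $x \in A$, this shows every $a \in A$ is the top of its tower, so the top $\bigcup_k Z(k, J(k))$ equals $A$; conditions (1)--(2) of Definition~\ref{towers} then hold by construction. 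The only real obstacle is the compactness argument bounding $J$; once that is secured, the rest is mechanical bookkeeping.
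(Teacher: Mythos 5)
Your construction is correct and is essentially the standard argument the paper delegates to its citations: the paper's proof of Lemma~\ref{finer} simply invokes \cite[Lemma 2.2]{Poon} for the first-return-time tower over a clopen complete section and \cite[Lemma 3.1]{putnam89} for the refinement step, which is exactly the bounded-return-time tower with $\mathcal P$-itinerary subdivision that you carry out explicitly (note the forward-visit bound you use is recorded in the paper's Remark after Definition~\ref{basic}). The only cosmetic difference is that you build the itinerary refinement into the base sets from the start rather than refining an already-constructed tower afterwards.
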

\begin{proof}
The proof for existence of a K-R partition $\mathcal Q$  that its top is $A$ is as \cite[Lemma 2.2]{Poon}. Then one can apply the method described in the proof of \cite[Lemma 3.1]{putnam89} to make   $\mathcal Q$  finer than the given $\mathcal P$.
\end{proof}

\begin{proposition}\label{perfect}
Let $(X,T)$ be a zero dimensional dynamical system and $W\subseteq X$ be a closed non-empty set. The following statements are equivalent.
\begin{enumerate}
\item \label{per} There is a perfect ordered Bratteli diagram $B=(V,E,\leq)$ and a conjugacy $\gamma: (X,T)\rightarrow (X_B, T_B)$ such that $\gamma(W)=X_B^{\rm max}$. 
\item \label{seq} There is a system of K-R partitions $\{\mathcal P_n\}_{n=0}^\infty$ for $(X, T,W)$ such that 
$$\bigcap_{n=0}^\infty Z_n=W$$ where $Z_n$ is the top of $\mathcal B_n$, $n\geq 0$.
\item \label{wbase} $W$ is a quasi-section for $(X,T)$.
\end{enumerate}
\end{proposition}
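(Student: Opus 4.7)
\medskip

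\textbf{Proof plan.} I will prove the chain $(\ref{per}) \Rightarrow (\ref{seq}) \Rightarrow (\ref{wbase})$ and $(\ref{wbase}) \Rightarrow (\ref{seq}) \Rightarrow (\ref{per})$, where the last two implications carry the substance.

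For $(\ref{per}) \Rightarrow (\ref{seq})$, I would use the canonical K--R system on $X_B$: at level $n$, the $v$-tower ($v \in V_n$) consists of cylinders specified by a finite path $v_0 \to v$ stacked according to the Vershik order, with the top atom corresponding to the unique maximal finite path $v_0\to v$. The tops $Z_n^B := \{x\in X_B : x_{[1,n]}\text{ is maximal}\}$ form a decreasing sequence of clopen sets whose intersection is exactly $X_B^{\rm max}$. Pulling these partitions back through $\gamma^{-1}$ yields $\{\mathcal{P}_n\}$ with $\bigcap_n Z_n = \gamma^{-1}(X_B^{\rm max}) = W$.

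For $(\ref{seq}) \Rightarrow (\ref{wbase})$, let $U$ be a clopen neighbourhood of $W$. Since $(Z_n)$ is a decreasing sequence of compact sets with $\bigcap Z_n = W \subseteq U$, compactness gives $Z_n \subseteq U$ for some $n$. In any K--R partition the top is a complete $T$-section (every atom $Z(k,j)$ satisfies $T^{J(k)-j}Z(k,j)\subseteq Z_n$), so $U$ is a complete $T$-section as well.

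For $(\ref{wbase}) \Rightarrow (\ref{seq})$, I would start by fixing a decreasing clopen neighbourhood basis $\{U_n\}_{n\ge 0}$ of $W$ with $\bigcap_n U_n = W$ (available by zero-dimensionality and normality) and a countable clopen base $\{B_m\}$ for the topology of $X$. By hypothesis each $U_n$ is a complete $T$-section. Apply Lemma~\ref{finer} inductively: having chosen $\mathcal{P}_n$ with top $U_n$, let $\mathcal{P}_{n+1}$ be a K--R partition with top $U_{n+1}$ that refines the join $\mathcal{P}_n \vee \{B_{n+1}, X\setminus B_{n+1}\}$. Then $\mathcal{P}_{n+1}$ refines $\mathcal{P}_n$, the tops $U_n$ shrink to $W$, and $\bigcup_n \mathcal{P}_n$ generates the topology of $X$.

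The substantial step is $(\ref{seq}) \Rightarrow (\ref{per})$. Given $\{\mathcal{P}_n\}$ with tops $Z_n \searrow W$, I would define an ordered Bratteli diagram $B = (V, E, \le)$ by letting $V_0 = \{v_0\}$ (after telescoping, one may assume $\mathcal{P}_0$ is a single tower), $V_n$ the index set of $\mathcal{P}_n$-towers, and edges from $v \in V_n$ to $w \in V_{n+1}$ indexed by those $j \in \{1,\ldots, J_{n+1}(w)\}$ for which the atom $Z_{n+1}(w,j)$ lies in the base $Z_n(v,1)$ of the $v$-tower, ordered by $j$. Define $\gamma : X \to X_B$ by sending $x$ to the unique infinite path whose $n$-th initial segment records the sequence of sub-tower slots occupied by $x$ in the chain $\mathcal{P}_0, \mathcal{P}_1,\ldots,\mathcal{P}_n$. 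Continuity of $\gamma$ follows because the preimage of a cylinder at level $n$ is an atom of $\mathcal{P}_n$; injectivity follows since $\bigcup_n \mathcal{P}_n$ is a base for the topology; and surjectivity follows from a compactness argument on nested atoms. Hence $\gamma$ is a homeomorphism. A direct combinatorial verification shows that on $X \setminus \gamma^{-1}(X_B^{\rm max})$ we have $\gamma \circ T = T_B \circ \gamma$, so $\bar T_B := \gamma \circ T \circ \gamma^{-1}$ is a continuous (in fact homeomorphic) extension of $T_B$ to all of $X_B$, witnessing that $B$ is perfect. Finally, $\gamma(W) = \gamma(\bigcap_n Z_n) = \bigcap_n Z_n^B = X_B^{\rm max}$.

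The main obstacle will be making the combinatorial identification $\gamma \circ T = T_B \circ \gamma$ on the complement of the maximal paths precise, and in particular checking that the edge order I put on $r^{-1}(w)$ (via the level $j$ at which a sub-tower appears in the $w$-tower) is exactly the order that makes the Vershik successor agree with the $T$-action; this is where the hypothesis $\bigcap Z_n = W$, rather than a possibly larger set, is used to guarantee that $X_B^{\rm max}$ is precisely $\gamma(W)$ and not a superset.
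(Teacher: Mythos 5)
Your proposal is correct and follows essentially the same route as the paper: the canonical pulled-back K--R system for $(\ref{per})\Rightarrow(\ref{seq})$, the compactness argument that some $Z_n$ sits inside any clopen neighbourhood of $W$ for $(\ref{seq})\Rightarrow(\ref{wbase})$, repeated application of Lemma~\ref{finer} along a shrinking clopen neighbourhood basis for $(\ref{wbase})\Rightarrow(\ref{seq})$, and the standard Herman--Putnam--Skau tower-to-diagram construction with $\gamma\circ T\circ\gamma^{-1}$ witnessing perfectness for $(\ref{seq})\Rightarrow(\ref{per})$. The only difference is that you spell out details (e.g.\ interleaving a countable clopen base to make $\bigcup_n\mathcal P_n$ generate the topology, and the explicit edge ordering) that the paper delegates to references.
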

\begin{proof}

\begin{enumerate}

\item [(1)] $\Rightarrow$ (2)
 Let $B$ and $\gamma:X\rightarrow X_B$ be as in $(1)$. Let $(\mathcal Q_n)_{n=0}^\infty$ be the standard sequence of K-R partitions obtained from $B$ (see, e.g., the description preceding \cite[Proposition 3.11]{aeg21}), that is, 
$$\mathcal Q_0=\{X_B\},\ \ \mathcal Q_n=\{U(e_1, e_2, \ldots, e_n):\ (e_1, e_2, \ldots, e_n)\in E_{1,n}\}.$$ Let $W_n$ denote the top of $\mathcal Q_n$. Then $\bigcap_{n=0}^\infty W_n=X_B^{\rm max}$. Let 
$$\mathcal P_n=\{\gamma^{-1}(L):\ L\in\mathcal Q_n\},\ \ Z_n=\gamma^{-1}(W_n),\ n\geq 0.$$
Then $\{\mathcal P_n\}_{n=0}^\infty$ is a system of K-R partition for $(X,T)$ such that $Z_n$ is the top of $\mathcal P_n$ and
$$\bigcap_{n=0}^\infty Z_n=\bigcap_{n=0}^\infty\gamma^{-1}(W_n)=\gamma^{-1}(X_B^{\rm max})=W.$$
\item [(2)]$\Rightarrow$(1) is very similar to  the case of properly ordered Bratteli diagrams where for a given system of K-R partitions, an ordered Bratteli diagram $B$ is constructed and a natural homeomorphism $\gamma: X\rightarrow X_B$ is defined (see \cite[Section 4]{hps92} and the paragraph following \cite[Lemma 3.4]{aeg21}).
Observe that $B$ is perfect since $\gamma\circ T\circ \gamma^{-1}:X_B\rightarrow X_B$ is a homeomorphism extension of the Vershik map $T_B: X_B\setminus X_B^{\rm max}\rightarrow X_B\setminus X_B^{\rm min}$. 
\item [(2)]$\Rightarrow$(3) First note that the top (and the base) of every K-R partition of $(X,T)$ is a complete $T$-section. Thus every $Z_n$ is a complete $T$-section. Let $A$ be a clopen subset of $X$ with $W\subseteq A$. Since $\bigcap_{n=0}^\infty Z_n=W$, it follows that there is some $n\in\mathbb N$ with $Z_n\subseteq A$, since otherwise one can choose $x_n\in Z_n\setminus A$ for all $n\in\mathbb N$.   Passing to a subsequence, it can be assumed that $x_n\rightarrow x$ for some $x\in X$. Then  $Z_1\supseteq Z_2\supset\cdots$ implies that $x\in\bigcap_{n=1}^\infty Z_n=W$. But $\{x_n\}_{n=1}^\infty\subseteq X\setminus A$ and $A$ is clopen, so we get that $x\in X\setminus A$ contradicting $W\subseteq A$. Hence $A$ contains some complete $T$-section $Z_n$ and therefore $A$ is a complete $T$-section. 
\item [(3)]$\Rightarrow$ (2) If $W$ is a quasi-section and $\{Z_n\}_{n=0}^\infty$ is any decreasing sequence of clopen sets with $\bigcap_{n=0}^\infty Z_n=W$ (which exist as $X$ is zero dimensional) then using Lemma \ref{finer} repeatedly, we can construct  the desired sequence of K-R partitions. \qedhere
\end{enumerate}
\end{proof}

Now we have the tools for modelling a factoring map $\alpha: (X,T)\rightarrow (Y,S)$ in terms of ordered premorphisms.

Let $W$ be a quasi-section for $(X,T)$. A {\it Bratteli-Vershik realization} (B-V) of $(X,T,W)$  is a perfect ordered Bratteli diagram $B$ satisfying conditions (1)-(3) of Proposition \ref{perfect}.

\medskip

Now we present the proof of Theorem \ref{fact2}.
\begin{proof}[\bf Proof of Theorem \ref{fact2}.]
Let $(\mathcal P_n)_{n=0}^\infty$ and $(\mathcal Q_n)_{n=0}^\infty$ be the systems of K-R partitions for $(X,T,X_0)$ and $(Y,S,Y_0)$ supporting  the Bratteli-Vershik realizations $C=(V,E,\leq)$ and $B=(W,S,\leq)$, respectively. We proceed by the method described in the third paragraph after  \cite[Lemma 3.6]{aeg21} to obtain a cofinal increasing sequence $(f_n)_{n=0}^\infty$ in $\mathbb N\cup\{0\}$ and a sequence of edges $(F_n)_{n=0}^\infty$ leading to an ordered  premorphism $f=(F, (f_n)_{n=0}^\infty, \leq):B\rightarrow C$ such that $\mathcal V(f)=\gamma_2\circ\alpha\circ\gamma_1^{-1}$ where $\gamma_1:X\rightarrow X_B$ and $\gamma_2:Y\rightarrow Y_C$ are as in Proposition \ref{perfect}(1). The main point that the same method works here is that for each $\mathcal Q_n$, the induced partition $$\alpha^{-1}(\mathcal Q_n)=\{\alpha^{-1}(L):\ L\in\mathcal Q_n\}$$ of $X$ is a K-R partition such that its top contains $X_0$ as $\alpha(X_0)\subseteq Y_0$ and the top of $\mathcal Q_n$ contains $Y_0$. Since the intersection of the top of $\mathcal P_n$'s equals $X_0$, we may find a large enough $f_n\in\mathbb N$ such that $\mathcal P_{f_n}$ refines $\alpha^{-1}(\mathcal Q_n)$ and the top  of $\mathcal P_{f_n}$ is contained in the top of $\alpha^{-1}(\mathcal Q_n)$ (see the proof of (2)$\Rightarrow$(3) of Proposition \ref{perfect}). 
The uniqueness (up to equivalence) of $f$ follows from Proposition \ref{unique1}.
\end{proof}
\begin{corollary}
Let $(X,T)$ and $(Y,S)$ be two zero dimensional dynamical systems. If $\alpha: (X,T)\rightarrow (Y,S)$ is a topological factoring  then there are B-V realizations $C$ and $B$ for $(X,T)$ and $(Y,S)$ respectively such that $B$ and $C$ are perfect and there exists an ordered premorphism $f:B\rightarrow C$.
\end{corollary}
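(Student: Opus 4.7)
The plan is to deduce this corollary from Theorem~\ref{fact2} by producing, out of thin air, quasi-sections $X_{0}\subseteq X$ and $Y_{0}\subseteq Y$ that are compatible with $\alpha$ in the sense $\alpha(X_{0})\subseteq Y_{0}$. Once such quasi-sections are in hand, Proposition~\ref{perfect} guarantees the existence of perfect B-V realizations $B$ of $(Y,S,Y_{0})$ and $C$ of $(X,T,X_{0})$, and Theorem~\ref{fact2} immediately produces the desired ordered premorphism $f:B\rightarrow C$.

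First, I would invoke Shimomura's theorem (in the form of the implication $(3)\Rightarrow(1)$ of Proposition~\ref{perfect}, applied after choosing any quasi-section for $(Y,S)$, whose existence is the content of \cite{Tak3}) to pick an arbitrary closed non-empty quasi-section $Y_{0}\subseteq Y$. Then I would simply define
\[
X_{0}:=\alpha^{-1}(Y_{0}).
\]
This is closed (continuity of $\alpha$), non-empty (surjectivity), and by definition satisfies $\alpha(X_{0})\subseteq Y_{0}$. The only real point to verify is that $X_{0}$ is itself a quasi-section for $(X,T)$.

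The key step is the following clopen-approximation argument. Let $U\subseteq X$ be an arbitrary clopen neighbourhood of $X_{0}$. Then $X\setminus U$ is closed in the compact space $X$, so its image $\alpha(X\setminus U)$ is closed in $Y$. For any $y\in Y_{0}$, the fibre $\alpha^{-1}(y)$ lies in $\alpha^{-1}(Y_{0})=X_{0}\subseteq U$ and therefore is disjoint from $X\setminus U$, showing $y\notin\alpha(X\setminus U)$. Hence $Y\setminus\alpha(X\setminus U)$ is an open set containing the compact set $Y_{0}$, and zero-dimensionality of $Y$ lets me sandwich a clopen $V$ between them:
\[
Y_{0}\subseteq V\subseteq Y\setminus\alpha(X\setminus U).
\]
By construction $\alpha^{-1}(V)\subseteq U$. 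Now since $Y_{0}$ is a quasi-section and $V$ is a clopen neighbourhood of $Y_{0}$, we have $\bigcup_{n\in\mathbb{Z}}S^{n}(V)=Y$. For any $x\in X$, choose $n$ with $S^{-n}(\alpha(x))\in V$; then $\alpha(T^{-n}x)\in V$, so $T^{-n}x\in\alpha^{-1}(V)\subseteq U$, i.e.\ $x\in T^{n}(U)$. Thus $U$ is a complete $T$-section and $X_{0}$ is a quasi-section.

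With the pair $(X_{0},Y_{0})$ in place, I would choose any perfect B-V realizations $C$ of $(X,T,X_{0})$ and $B$ of $(Y,S,Y_{0})$ using Proposition~\ref{perfect}, and then apply Theorem~\ref{fact2} directly to obtain the ordered premorphism $f:B\rightarrow C$ with $\mathcal{V}(f)=\alpha$ (modulo the conjugacies identifying $X$ with $X_{C}$ and $Y$ with $Y_{B}$). The only potential obstacle is the clopen-sandwich argument above, but this is a standard compactness/zero-dimensionality manoeuvre, so no genuine difficulty arises beyond what is already packaged in Theorem~\ref{fact2}.
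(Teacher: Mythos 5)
Your proposal is correct, but it runs in the opposite direction from the paper's proof, and the comparison is worth recording. The paper first invokes Shimomura's theorem to get a B-V realization $C$ of $(X,T)$, reads off the quasi-section $X_0$ of $(X,T)$ corresponding to $X_C^{\rm max}$, and then \emph{pushes it forward}, setting $Y_0:=\alpha(X_0)$; checking that a forward image of a quasi-section under a factor map is again a quasi-section is essentially immediate (for a clopen $V\supseteq Y_0$, the set $\alpha^{-1}(V)$ is a clopen neighbourhood of $X_0$, hence a complete $T$-section, and surjectivity of $\alpha$ transports $\bigcup_n T^n(\alpha^{-1}(V))=X$ to $\bigcup_n S^n(V)=Y$). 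You instead start with a quasi-section $Y_0$ of the \emph{factor} and \emph{pull it back}, setting $X_0:=\alpha^{-1}(Y_0)$; this forces you to do the extra work of the clopen-sandwich argument (compactness of $\alpha(X\setminus U)$ plus zero-dimensionality of $Y$ to interpose a clopen $V$ with $Y_0\subseteq V$ and $\alpha^{-1}(V)\subseteq U$), which is correct and standard but is precisely the step the paper's ordering of choices avoids. Both routes rest on the same external inputs (Shimomura's existence of a quasi-section for one of the two systems, Proposition~\ref{perfect}, and Theorem~\ref{fact2}), and both yield $\alpha(X_0)\subseteq Y_0$ as required; your version buys the extra (unneeded here) flexibility of prescribing the quasi-section on the factor side, at the cost of a genuinely nontrivial verification that preimages of quasi-sections are quasi-sections, whereas the paper's choice makes the compatibility of the two quasi-sections essentially free.
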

\begin{proof}
By \cite{Tak2}, $(X,T)$ has some B-V realization $C$.  Then by Proposition~\ref{perfect}  (or \cite[Theorem~1.1]{Tak3}), one can find  quasi-section $X_0$  associated to the set of infinite maximal paths of the Bratteli diagram.  By the topological factoring,  $Y_0=\alpha(X_0)$ is a quasi-section for $(Y,S)$. Now one can apply Theorem~\ref{fact2} to model $(Y, S, Y_0)$ by  an appropriate Bratteli diagram $B$ with an ordered premorphism $f:B\rightarrow C$.
\end{proof}

\begin{corollary}\label{equiv2}
Any two B-V realizations for a zero dimensional dynamical system $(X,T,W)$, where $W$ is a quasi-section, are equivalent. 
\end{corollary}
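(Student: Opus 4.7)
The proof rests on a single application of Theorem \ref{fact2} to the identity map. Let $B$ and $B'$ be two B-V realizations of $(X,T,W)$, with the associated conjugacies $\gamma:X\to X_B$ and $\gamma':X\to X_{B'}$ furnished by Proposition \ref{perfect}, satisfying $\gamma(W)=X_B^{\max}$ and $\gamma'(W)=X_{B'}^{\max}$. The identity map $\mathrm{id}_X:(X,T)\to(X,T)$ is trivially a topological factoring satisfying $\mathrm{id}_X(W)\subseteq W$, so Theorem \ref{fact2} applied with source realization $B$ and target realization $B'$ produces a unique (up to equivalence) ordered premorphism $f:B'\to B$ whose induced map $\mathcal V(f):X_B\to X_{B'}$ coincides with $\gamma'\circ\gamma^{-1}$. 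Swapping the roles of the two realizations yields a second ordered premorphism $g:B\to B'$ with $\mathcal V(g)=\gamma\circ\gamma'^{-1}$.

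Because $\gamma$ and $\gamma'$ are conjugacies of dynamical systems, $\mathcal V(f)$ and $\mathcal V(g)$ are mutually inverse conjugacies between $(X_B,T_B)$ and $(X_{B'},T_{B'})$. This already delivers the required equivalence of the two realizations in the strongest sense: there exist ordered premorphisms in each direction whose induced Vershik system maps are inverse conjugacies. To certify this at the level of premorphisms themselves, one forms the composition (obtained from the composition of ordered sets of edges introduced before Lemma \ref{compat}, together with the composed cofinal index sequences). The suitably defined compositions of $f$ and $g$ then induce the identity maps on $X_B$ and $X_{B'}$, matching what the identity premorphisms on $B$ and $B'$ induce. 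Proposition \ref{unique1} therefore forces these compositions to be equivalent to the identity premorphisms on $B$ and $B'$ in the sense of Definition \ref{iso1}.

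The only step requiring care beyond the direct invocation of Theorem \ref{fact2} is the verification that ordered premorphisms can be composed in a way compatible with $\mathcal V$, and this is routine bookkeeping: the ordered commutativity axiom of Definition \ref{def61} guarantees that the successive compositions of the ordered edge sets $F_n$ and $G_{f_n}$ fit into an ordered commutative ladder, and the construction of $\mathcal V$ immediately gives functoriality $\mathcal V(g\circ f)=\mathcal V(f)\circ\mathcal V(g)$. No substantive obstacle arises; the entire content is the uniqueness clause of Theorem \ref{fact2} applied twice.
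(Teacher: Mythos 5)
Your proposal is correct and follows essentially the same route as the paper's own primary proof: apply Theorem~\ref{fact2} to the identity map in both directions, compose the resulting ordered premorphisms, and invoke Proposition~\ref{unique1} to conclude that the compositions are equivalent to the identity premorphisms, hence that the two diagrams are isomorphic in the category of ordered Bratteli diagrams and therefore equivalent. The paper also records a second, more hands-on argument that interleaves the two K-R systems directly, but your argument matches its first one.
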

\begin{proof}
Let $B$ and $C$ be two B-V representations for $(X,T,W)$. Consider $$\alpha={\rm id}:(X,T)\rightarrow (X,T).$$
By Theorem~\ref{fact2}, there are ordered premorphisms $$f:B\rightarrow C,\ \ \ g:C\rightarrow B$$
such that $$\mathcal V(f)=\alpha={\rm id},\ \ \mathcal V(g)=\alpha^{-1}={\rm id}.$$
Then $gf:B\rightarrow B$ and $fg:C\rightarrow C$ are ordered premorphisms (see \cite[Definition 2.7]{aeg21} for composition of two ordered premorphisms) and $$\mathcal V(gf)=\mathcal V(f)\mathcal V(g)={\rm id}=\mathcal V({\rm id}_B),  \ \mathcal V(fg)=\mathcal V({\rm id}_C)$$
where ${\rm id}_B:B\rightarrow B$ and ${\rm id}_C:C\rightarrow C$ are the identity premorphisms. By Proposition~\ref{unique1}, 
$$gf\sim {\rm id}_B,\ \ \ fg\sim{\rm id}_C.$$
Therefore, $[g][f]=[{\rm id}_B]$ and $[f][g]=[{\rm id}_C]$. Thus $[f]:B\rightarrow C$ is an isomorphism of ordered Bratteli diagrams. It turns out that $B$ and $C$ are isomorphisc in the category of ordered Bratteli diagrams and hence they are equivalent by \cite[Proposition 2.9]{aeg21}. The proof is finished here. 
\medskip

There is an alternative proof for this corollary using a K-R partition argument.  Let $B=(V,E,\leq)$ and $C=(W, S, \leq)$ be two B-V realizations of $(X,T, W)$ obtained from K-R systems $\{\mathcal P_n\}_{n\geq 0}$ and $\{\mathcal Q_n\}_{n\geq 0}$, respectively. Let $Z_n$ and $W_n$ be the top levels of $\mathcal P_n$ and $\mathcal Q_n$, respectively for every $n\geq 0$. Set $n_0=0$ and $n_1=1$. Since 
$$\bigcap_{n=0}^\infty Z_n=W=\bigcap _{n=0}^\infty W_n$$
and $\cup_{n=1}^\infty\mathcal Q_n$ is a basis for the topology of $X$, there exists $n_2>n_1$ such that $\mathcal Q_{n_2}$ refines $\mathcal P_{n_1}$ and $W_{n_2}\subseteq Z_{n_1}$ (the latter follows from an argument similar to the one in the proof of (2)$\Rightarrow$ (3) of Proposition~\ref{perfect}). Similarly, there is $n_3>n_2$ such that $\mathcal P_{n_3}$ refines $\mathcal Q_{n_2}$ and $Z_{n_3}\subseteq W_{n_2}$. Continuing this procedure, we obtain a strictly increasing sequence $\{n_k\}_{k=0}^\infty$ such that
$$\mathcal P_{n_1}\geq \mathcal Q_{n_2}\geq \mathcal P_{n_3}\geq \mathcal Q_{n_4}\geq\cdots$$
and $$Z_{n_1}\supseteq W_{n_2}\supseteq Z_{n_3}\supseteq W_{n_4}\supseteq \cdots.$$
Put $R_0=\{X\}$, $R_k=\mathcal P_{n_k}$ for all odd $k$, and $R_k=\mathcal Q_{n_k}$ for every even $k$. Then $\{R_k\}_{k=0}^\infty$ is a K-R system for $(X,T,W)$ and gives an ordered Bratteli diagram $D$ which is a B-V realization for $(X,T,W)$ and telescoping it along odd  (resp., even) levels equals the telescoping of $B$ (resp., $C$) along $\{n_{2k+1}\}_{k=0}^\infty$ (resp., $\{n_{2k}\}_{k=0}^\infty$). Thus $B$ and $C$ are equivalent. 
\end{proof}
\begin{remark}\label{singleton1}
Let us recall that by the results of \cite{hps92, putnam89, Tak3}, for a zero dimensional dynamical system $(X,T)$, there exists a singleton quasi-section $\{x_0\}$ if and only if $(X,T)$ is essentially minimal. 
Indeed, when $\{x_0\}$ is a quasi-section for $(X,T)$ then there exists a perfect ordered Bratteli diagram $B$ with a conjugacy  $\gamma: (X,T)\rightarrow(X_B,T_B)$ that $\gamma(\{x_0\})=X_B^{\max}$ (see also Proposition \ref{perfect}). In particular, $X_B^{\max}$ is a singleton and therefore, $X_B^{\min}$ is a singleton. Thus $(X,T,x_0)$ is essentially minimal. Conversely, when $(X,T)$ is essentially minimal,  by \cite[Theorem 1.1]{hps92}, any point $x_0$ in the unique minimal subset, is a quasi-section.
\end{remark}
\section{Topological Factoring, Ordered Premopphisms and Inverse Limit Systems}
By the well-known theorem of Curtis-Hedlund-Lyndon, topological factoring between two subshift systems can be modelled by a local rule called {\it the sliding block code} between the two systems \cite{hedlund}. 
In this section, we go through the proof of  a generalization of this theorem for zero dimensional dynamical systems. We see in Theorem \ref{inversefactor} that in this general case, the factoring is defined by a sequence of sliding block codes.

Let $(X,T,X_0)$ be a zero dimensional dynamical system with a quasi-section $X_0$. Consider the sequence of K-R partitions $\{\mathcal Q_k\}_{k\geq 0}$ for $(X,T,X_0)$ as in Proposition \ref{perfect}(2), where $\mathcal Q_0=\{X\}$.  Then there exists a truncation map $\tau_k:X\rightarrow \mathcal Q_k$ defined by $\tau_k(x)=U$ where $U$ is the unique element in $\mathcal Q_k$ that $x\in U$. So the natural projections $\tilde{\tau_k}: X\rightarrow \mathcal Q_k^\Z$  are defined by 
 \begin{equation}\label{proj}
 \tilde{\tau}_k(x)=(\tau_k(T^nx))_{n\in\Z}.
 \end{equation}
  It turns out that at each level $k$, we have a subshift system $(\tilde{\mathcal Q}_k,\sigma)$, also known as a {\it symbolic factor} of $(X,T)$ with respect to the partition $\mathcal Q_k$:
$$(\tilde{\mathcal Q}_k,\sigma) \ \ {\rm where} \ \ \tilde{\mathcal Q}_k=\tilde{\tau}_k(X)\subseteq \mathcal Q_k^\mathbb Z, \ \ \sigma(\tilde{\tau}_k(x))=\tilde{\tau}_k(Tx).$$ 
As $\mathcal Q_k$ refines $\mathcal Q_{k-1}$, there is a natural map $\mathcal Q_k\rightarrow\mathcal Q_{k-1}$ sending $U\in\mathcal Q_k$ to $V\in\mathcal Q_{k-1}$ where $U\subseteq V$. This map can be considered as a $1$-block map inducing a sliding block code 
$\alpha_k: (\tilde{\mathcal Q}_k,\sigma)\rightarrow (\tilde{\mathcal Q}_{k-1},\sigma)$. 
Note that $\alpha_k\circ\tilde\tau_{k}=\tilde\tau_{k-1}$ for all $k\geq 1$, since
\begin{equation}\label{trunc1}
\alpha_k(\tilde\tau_k(x))=\alpha_k((\tau_k(T_B^n(x)))_{n\in\Z})=(\tau_{k-1}(T_B^n(x)))_{n\in\Z}=\tilde\tau_{k-1}(x).
\end{equation}
Consequently, we have the following inverse system whose inverse limit is conjugate to $(X,T)$:
\[
\xymatrix{(\tilde{\mathcal Q}_{0},\sigma)
&( \tilde{\mathcal Q}_{1},\sigma)\ar[l]_{\alpha_{1}} &(\tilde{\mathcal Q}_{2},\sigma)\ar[l]_{\alpha_{2}} &\cdots\ar[l]_{\alpha_3}&  (X,T,X_0)\ar[l]
}
\]
\begin{proof}[\bf Proof of Theorem \ref{inversefactor}.]  
First assume that $\pi:X\rightarrow Y$ is a topological factoring with $\pi(X_0)\subseteq Y_0$.  Consider the inverse limit systems associated to the two systems as described above. Suppose that  $d_1$ and $d_2$ are the metrics on $C$ and $B$, respectively that are compatible with the topologies on $X$ and $Y$. 
 By the proof of Theorem \ref{fact2},  there exists a  strictly increasing sequence of non-negative integers  $(f_n)_{n=0}^\infty$ that leads to the existence of  an ordered premorphism $f$ between the two Bratteli diagrams. 
So for every $i$, let $n_i:=f_i$ and  consider the sequence of K-R partitions $\{\mathcal Q_{n_i}\}_{i\geq 0}$.
Then the maps $\pi_k$'s can be well-defined  by using  the natural projections  $\tilde{\tau}_{n_k}:X\rightarrow \tilde{Q}_{n_k}$ and $\tilde\tau'_k:Y\rightarrow \tilde{P}_k$. Indeed, for every $k\geq 0$ we have
 $$\pi_k:\tilde{Q}_{n_k}\rightarrow \tilde{P}_k,\ \ \ \ \pi_k(\tilde{\tau}_{n_k}(x)):=\tilde\tau'_k(\pi (x))$$ which make topological factorings between the associated local subshifts. In other words, by (\ref{trunc1}),
 \begin{eqnarray*}
 \pi_k\circ\sigma(\tilde\tau_{n_k}(x))&=&\pi_k\tilde\tau_{n_k}\circ T(x)=\tilde\tau'_k(\pi(Tx))=\tilde\tau'_k(S\pi(x))\\
 &=&\sigma\circ\tilde\tau'_k(\pi(x))\\
 &=&\sigma\circ\pi_k(\tilde\tau_{n_k}(x)).
 \end{eqnarray*}
 Moreover, by (\ref{proj}) and (\ref{trunc1}), for every $k\geq 1$, 
 \begin{eqnarray*}
 \beta_k\circ\pi_k(\tilde\tau_{n_k}(x))&=&\beta_k(\tilde\tau'_k(\pi(x)))\\
 &=&\beta_k((\tau'_k(S^n(\pi(x))))_{n\in\Z})\\
&=&(\tau'_{k-1}(S^n(\pi(x))))_{n\in\Z}\\
&=&\tilde\tau'_{k-1}(\pi(x))\\
&=&\pi_{k-1}(\tilde{\tau}_{n_{k-1}}(x))\\
&=&\pi_{k-1}\circ\gamma_k(\tilde\tau_{n_k}(x)).
 \end{eqnarray*}
 For the other direction, assume that for  $(X,T,X_0)$, $(Y,S,Y_0)$ and their associated K-R partitions $\{\mathcal Q_n\}_{n\geq 0}$ and $\{\mathcal P_n\}_{n\geq 0}$ respectively,  there exists a sequence $\{n_i\}_{i\geq 0}$ such that  the Diagram~\ref{diag} exists and all the rectangles in that commute. Then it is straightforward that the map $$\pi:(X,T,X_0)\rightarrow (Y,S,Y_0)$$ defined by 
 $$ \pi(x):=\varprojlim_i \pi_i(\tilde\tau_{n_i}(x)),\ x\in X$$
  is a topological factoring and $\pi(X_0)\subseteq Y_0$. 
\end{proof}

We recall the S-adic representation of an ordered Bratteli diagram form  \cite{don} and \cite[Subsection 2.4]{gh18}. Let  $B=(V,E,\leq)$ be an ordered Bratteli diagram. Then $\sigma^B=(\sigma_i^B:V_i\rightarrow V_{i-1}^*)_{i\geq 1}$ is defined for $i\geq 2$ by 
$$ \sigma_i^B(v)=s(e_1(v))s(e_2(v))\cdots s(e_k(v))$$
where $\{e_j(v):\ j=1,\ldots, k(v)\}$ is the ordered set of the edges in $E_i$ with range $v$, and for $i=1$, $ \sigma_1^B:V_1^*\rightarrow E_1^*$, is defined by $\sigma_1^B(v)=e_1(v)\cdots e_\ell(v)$ where $e_1(v),\ldots, e_\ell(v)$ are all the edges in $E_1$ with range $v\in V_1$ and $e_1(v)<\cdots<e_\ell(v)$. Note that by concatenation, one can extend $\sigma_i^B$ as 
$\sigma_i^B:V_i^*\rightarrow V_{i-1}^*$. Also, recall that $\sigma_{(i,j]}^B=\sigma_{i+1}^B\circ\sigma_{i+2}^B\circ\cdots\circ\sigma_j^B$ is a morphism from $V_j^*$ to $V_i^*$ for $0\leq i\leq j$. We say that a morphism $\sigma:A^*\rightarrow B^*$ is letter-surjective if for any $b\in B$ there is $a\in A$ such that $b$ appears in $\sigma(a)$.

Now consider $(Y,S,Y_0)$ and $(X,T,X_0)$ with their associated  B-V models $B=(V,E,\leq)$ and $C=(W,E',\leq)$, respectively. Having the ordered premorphism $f=(F,(f_k)_{k=0}^\infty,\geq)$ (see Definition \ref{def61} for notations), for each $k\geq 1$ the set of edges $F_k$ induces a morphism  $\eta_k:W_{n_k}\rightarrow V_k^*$. To see this, suppose that $w\in W_{n_k}$. By the definition of $F_k$, there exists an ordered set of edges in  $F_k$, say $\{g_1, g_2, \cdots, g_m\}$ 
 such that for every $1\leq i\leq m$, $s(g_i)\in V_k$, i.e. the source of the edge $g_i$ is a vertex in $V_k$. Then
 \begin{equation}\label{commute}
 \eta_k(w)=s(g_1)s(g_2)\cdots s(g_m).
 \end{equation}
This can naturally be extended to $W_{n_k}^*$ by concatenation. Then ordered commutativity of the premorphism $f$ implies that
$$\forall k\geq 1,\ \ \eta_{k-1}\circ \sigma^C_{(n_{k-1},n_k]}=\sigma^B_k\circ \eta_k.$$
where $\sigma^B_i:V_{i+1}\rightarrow V_i^*$ and $\sigma^C_i:W_{i+1}\rightarrow W_i^*$, $i\geq 1$ are the morphisms between consecutive levels of the Bratteli diagrams $B$ and $C$ respectively and $\sigma^C_{(n_i,n_{i+1}]}=\sigma^C_{n_i+1}\circ\cdots\circ\sigma^C_{n_{i+1}}$. 
 In other words, we have the following sequence of commutative (rectangular) diagrams:
\begin{equation}\label{diagram}
\xymatrix{W_{0}\ar[d]_{\eta_{0}}
& W_{n_1}^*\ar[l]_{_{\sigma^C_{(0,n_1]}}}\ar[d]_{\eta_{1}} &W_{n_2}^*\ar[l]_{\sigma^C_{(n_1,n_2]}}\ar[d]_{\eta_{2}} &\cdots\ar[l]_{\xi_{(n_2,n_3]}}&  \ \  \\
\ V_{0}
& V_{1}^*\ar[l]^{\sigma^B_1} &V_{2}^*\ar[l]^{\sigma^B_2}&\cdots\ar[l]^{\sigma^B_3} \   
 }
\end{equation}

Note that  for every $i\geq 0$, the morphism $\eta_i:W_{n_i}\rightarrow V_i^*$ is in the opposite direction of $F_i:V_i\rightarrow W_{n_i}$ (used in the previous sections). In fact, they  essentially coincide, meaning that $\eta_i$ is the S-adic interpretation of $F_i$.

We know that each tower $\mathcal T$ in $\mathcal P_i$ (resp. $\mathcal Q_i$), $i\geq 1$ is associated with  a vertex $v\in V_i$ (resp. $w\in W_i$) and all the edges terminating at it from $V_{i-1}$ (resp. $W_{i-1}$). Therefore, for  each tower $\mathcal T\in\mathcal Q_{n_i}$, the morphism $\eta_k$  specifies a stacking of  $m$  towers of $\mathcal P_i$ as in equation (\ref{commute}).
Then we have the following proposition as a corollary of  Theorem 1.1  and Theorem 1.2.
\begin{proposition}\label{localmor}
Let $(X,T)$ and $(Y,S)$ be zero dimensional dynamical systems with quasi-sections $X_0$ and $\{y_0\}$ respectively. Then there exists a toplogical factoring $\pi:X\rightarrow Y$ with $\pi(X_0)=\{y_0\}$ if and only if  
for every B-V models $C=(W,E',\leq)$ and $B=(V,E,\leq)$ for $(X,T,X_0)$ and $(Y,S,\{y_0\})$ respectively,   there exists  an increasing sequence $\{n_i\}_{i\geq 0}$ of non-negative integers with $n_0=0$, and non-erasing letter-surjective morphisms $\eta_i:V_{n_i}^*\rightarrow W_i^*$  for every $i\geq 0$,  the following diagram commutes:
\begin{equation}\label{diagram}
\xymatrix{W_{n_i}^*\ar[d]_{\eta_{i}}
& \ \ W_{n_{i+1}}^*\ar[l]_{_{\sigma^C_{(n_i,n_{i+1}]}}}\ar[d]_{\eta_{i+1}} &\ \  \\
\ V_{i}^*
& \ \ V_{i+1}^*.\ar[l]^{\sigma^B_{i+1}} &\   
 }
\end{equation}
\end{proposition}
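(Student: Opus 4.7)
The plan is to read Proposition \ref{localmor} as the S-adic translation of Theorems \ref{fact2} and \ref{main} in the essentially minimal setting. For the forward direction, I will invoke Theorem \ref{fact2} (applied to $\pi$, $X_0$, and $Y_0=\{y_0\}$) to obtain an ordered premorphism $f=(F,(f_k)_{k\geq 0},\leq):B\rightarrow C$ with $\mathcal V(f)=\pi$ modulo the B-V conjugacies. Setting $n_i:=f_i$, I translate each ordered set of edges $F_i$ into the morphism $\eta_i:W_{n_i}^*\rightarrow V_i^*$ dictated by formula \eqref{commute}: for $w\in W_{n_i}$ with $r^{-1}(w)\cap F_i=\{g_1<\cdots<g_m\}$, set $\eta_i(w)=s(g_1)s(g_2)\cdots s(g_m)$. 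Non-erasingness of $\eta_i$ matches the requirement in Definition \ref{def61}(1) that $r^{-1}(w)$ be non-empty, and letter-surjectivity matches the analogous source condition. Commutativity of each square of diagram \eqref{diagram} then records precisely the ordered commutativity $E_{i+1}\circ F_{i+1}\cong F_i\circ S_{n_i,n_{i+1}}$ required by Definition \ref{def61}(3).

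For the backward direction, I will reverse this construction. Given the sequence $(\{n_i\},\{\eta_i\})$ with $n_0=0$, I define $f=(F,(f_k)_{k\geq 0},\leq)$ by setting $f_i:=n_i$ and, for each $w\in W_{n_i}$ with $\eta_i(w)=v_1v_2\cdots v_m$, placing in $F_i$ the totally ordered edges $g_1<\cdots<g_m$ with $r(g_j)=w$ and $s(g_j)=v_j$. Non-erasingness and letter-surjectivity together ensure the non-emptiness conditions in Definition \ref{def61}(1); part (2) of that definition is built into the construction; and part (3) follows from the commutativity of the squares via the same dictionary used in the forward direction. Now the crucial observation is that $\{y_0\}$ being a singleton quasi-section forces $X_B^{\max}$ to be a singleton by the argument of Remark \ref{singleton1}; perfectness of $B$ then yields that $X_B^{\min}=\bar T_B(X_B^{\max})$ is also a singleton. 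Therefore Proposition \ref{factor3} (the substantive content of the $(2)\Rightarrow(1)$ direction of Theorem \ref{main}) applies and $\mathcal V(f):X_C\rightarrow X_B$ is a topological factoring. Composing with the B-V conjugacies yields $\pi:X\rightarrow Y$, and Proposition \ref{factor1}\eqref{max} gives $\mathcal V(f)(X_C^{\max})\subseteq X_B^{\max}$, hence $\pi(X_0)\subseteq\{y_0\}$; non-emptiness of $X_0$ then forces equality.

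The main obstacle I anticipate is verifying carefully that the ordered commutativity $E_{i+1}\circ F_{i+1}\cong F_i\circ S_{n_i,n_{i+1}}$ is equivalent to the commutativity of the $i$-th square of \eqref{diagram}. This amounts to unfolding both sides of $\sigma^B_{i+1}\circ\eta_{i+1}=\eta_i\circ\sigma^C_{(n_i,n_{i+1}]}$ letter-by-letter---each letter being read as the source of an edge in the appropriate ordered set---and matching the two reverse-lexicographic orderings on $E_{i+1}\circ F_{i+1}$ and $F_i\circ S_{n_i,n_{i+1}}$ under the canonical bijection. Once this dictionary between ordered edges and words is in place, the equivalence asserted by the proposition becomes a direct translation, and the two directions above close the argument.
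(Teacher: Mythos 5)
Your proposal is correct and follows essentially the same route as the paper, which presents this proposition as a direct corollary of Theorems \ref{fact2} and \ref{main} via the dictionary of equation \eqref{commute} between the ordered edge sets $F_i$ of a premorphism and the morphisms $\eta_i$; in particular, your use of the singleton quasi-section $\{y_0\}$ to force a unique infinite min path in $B$ (Remark \ref{singleton1} together with perfectness) so that Proposition \ref{factor3} applies, and your derivation of $\pi(X_0)=\{y_0\}$ from Proposition \ref{factor1}\eqref{max}, are exactly the intended steps. Note only that you correctly adopt the direction $\eta_i:W_{n_i}^*\rightarrow V_i^*$ used in the diagram and the surrounding text, rather than the (apparently typographical) reversed direction appearing in the proposition's statement.
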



{\bf Acknowledgements.} We are grateful to Reem Yassawi for helpful discussions and her  comments on the final version of the introduction. We would also like to thank Bastian Espinoza for introducing \cite{Tak1} to us which led us to go through more recent publications of T. Shimomura, specifically \cite{Tak2, Tak4, Tak3}. 

The research of the first author was in part supported by a grant from IPM (No.1402460117) and a grant from INSF (4029595). The research of the second author  was fully supported by the EPSRC grant number EP/V007459/2. The project initiated  during the second author's appointment at The Open University.


\end{document}